\documentclass[a4paper,draft]{amsproc}
\usepackage{amssymb}
\usepackage{amscd} 
\theoremstyle{plain}
\newtheorem*{theoA}{Theorem A}
\newtheorem*{theoB}{Theorem B}
\newtheorem*{theoC}{Theorem C}
\newtheorem*{theoD}{Theorem D}
\newtheorem*{theoE}{Theorem E}
\newtheorem*{theoF}{Theorem F}
\newtheorem*{theoG}{Theorem G}
\newtheorem*{theoH}{Theorem H}
\newtheorem*{theoI}{Theorem I}
 \newtheorem{theo}{Theorem}[section]
 
 \newtheorem{lem}{Lemma}[section]
 
\theoremstyle{definition}
 \newtheorem{exm}{Example}[section]
 \newtheorem{ques}{Question}[section]
 
 \newtheorem{note}{Note}[section]
\theoremstyle{remark}
 \newtheorem{rem}{Remark}[section]
 \newcommand{\ol}{\overline}
\newcommand{\be}{\begin{equation}}
\newcommand{\ee}{\end{equation}}
\newcommand{\beas}{\begin{eqnarray*}}
\newcommand{\eeas}{\end{eqnarray*}}
\newcommand{\bea}{\begin{eqnarray}}
\newcommand{\eea}{\end{eqnarray}}

\numberwithin{equation}{section}
\renewcommand{\leq}{\leqslant}
\renewcommand{\geq}{\geqslant}

\setlength{\textwidth}{30cc} \setlength{\textheight}{50cc}
\title[On uniqueness of two meromorphic functions ...]{\LARGE On uniqueness of two meromorphic functions sharing a small function}

\subjclass[2010]{ Primary 30D35. }
\keywords{ meromorphic function, uniqueness theorem, small function, differential polynomials, fixed point}
\numberwithin {equation}{section}
\date{}


\author[Molla Basir Ahamed]{Molla Basir Ahamed}

\address{ Department of Mathematics, Kalipada Ghosh Tarai Mahavidyalaya, West Bengal 734014, India.}
\email{basir\_math\_kgtm@yahoo.com, bsrhmd117@gmail.com.}



\begin{document}

\vspace{18mm} \setcounter{page}{1} \thispagestyle{empty}


\begin{abstract}
In this paper, we have investigated the uniqueness problems of entire and meromorphic functions concerning differential polynomials sharing a small function. Our results radically extended and improved the results of \emph{Bhoosnurmath-Pujari} \cite{Bho & Puj-IJA-2013} and \emph{Harina - Anand} \cite{Wag & Ana-AM-2016} not only by sharing small function instead of fixed point but also reducing the lower bound of $ n $. The authors \emph{Harina-Anand} \cite{Wag & Ana-AM-2016} made plenty of mistakes in their paper. We have corrected all of them in a more convenient way. At last some open questions have been posed for further study in this direction.
\end{abstract}

\maketitle

\section{Introduction, definitions and main results}
The Nevanlinna theory mainly describes the asymptotic distribution of solutions of the equation $f(z)=w$, as $w$ varies. At the outset, we assume that readers are familiar with the basic Nevanlinna Theory \cite{Hay-CP-1964}. First we explain the general sharing notion. Let $f$ and $g$ be two non-constant meromorphic functions in the complex plane $\mathbb{C}$. Two meromorphic functions $f$ and $g$ are said to share a value $w\in\mathbb{C}\cup\{\infty\}$ $ IM $ (ignoring multiplicities) if $ f $ and $ g $ have the same $ w $-points counted with ignoring multiplicities. If multiplicities of $ w $-points are counted, then $ f $ and $ g $ are said to share $ w $ $ CM $ (counting multiplicities).\par
When $w=\infty$ the zeros of $f-w$ means the poles of $f$.\par
It is well known that if two moromorphic functions $ f $ and $ g $ share four distinct values $ CM $, then one is \emph{M{\"o}bius Transformation} of the other. In $ 1993 $, corresponding to one famous question of \emph{Hayman} \cite{Hay-AP-1967}, \emph{Yang-Hua} \cite{Yan & Hua-AASFM-1997} showed that similar conclusions hold for certain types of differential polynomials when they share only one value.\par Recently by using the same argument as in \cite{Yan & Hua-AASFM-1997}, \emph{Fang-Hong} \cite{Fan & Hon-IJPAM-2001} the following result was obtained.
\begin{theoA}
	Let $ f $ and $ g $ be two transcendental entire functions, $ n\geq 11 $, an integer. If $ f^n(f-1)f^{\prime} $ and $ g^n(g-1)g^{\prime} $ share $ 1 $ $ CM $, then $ f\equiv g $.
\end{theoA}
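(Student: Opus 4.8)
The plan is to transcribe everything into Nevanlinna theory and then invoke the classical dichotomy for two functions that share a value $CM$. Put $F:=f^{n}(f-1)f'$ and $G:=g^{n}(g-1)g'$; since $f$ and $g$ are transcendental entire, so are $F$ and $G$. First I would record two kinds of estimate. From the identity $f^{n+2}=F\cdot\frac{f^{2}}{(f-1)f'}$, the first main theorem and the lemma on the logarithmic derivative give $m\left(r,\frac{f^{2}}{(f-1)f'}\right)\le 2T(r,f)+S(r,f)$, whence
\[
n\,T(r,f)+S(r,f)\ \le\ T(r,F)=m(r,F)\ \le\ (n+2)\,T(r,f)+S(r,f),
\]
and likewise for $G$; in particular $S(r,F)=S(r,f)$ and $S(r,G)=S(r,g)$. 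For the zero distribution of $F$: a zero of $f$ is a zero of $F$ of order $\ge n\ge 2$; a zero of $f-1$ of order $p$ is a zero of $F$ of order $2p-1$; and the remaining zeros of $f'$ (those not over $0$ or $1$) number, with multiplicity, at most $N_{0}(r,0;f')$, for which the ramified form of the second main theorem gives $T(r,f)+N_{0}(r,0;f')\le\overline{N}(r,0;f)+\overline{N}(r,0;f-1)+S(r,f)$. Truncating at $2$ and adding these three contributions,
\[
N_{2}(r,0;F)\ \le\ 4\,T(r,f)+S(r,f),\qquad N_{2}(r,0;G)\ \le\ 4\,T(r,g)+S(r,g),
\]
while $N_{2}(r,\infty;F)=N_{2}(r,\infty;G)=0$ because $F$ and $G$ are entire.

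Since $\max\{T(r,F),T(r,G)\}\ge n\max\{T(r,f),T(r,g)\}$ and $n\ge 11$, the estimates above yield
\[
\delta_{2}(0;F)+\delta_{2}(0;G)+\Theta(\infty;F)+\Theta(\infty;G)\ \ge\ 4-\frac{8}{n}\ >\ 3,
\]
which is exactly the hypothesis of the standard lemma on two meromorphic functions sharing $1$ $CM$; equivalently, one writes $h=(F-1)/(G-1)$, which is entire and zero-free, so $h=e^{\gamma}$ with $\gamma$ entire, and then disposes of the sub-cases ``$\gamma$ constant'' and ``$\gamma$ non-constant'' by the second main theorem together with the bounds of the previous paragraph. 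The conclusion is that either $F\equiv G$ or $FG\equiv 1$.

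The case $FG\equiv 1$ is excluded immediately: $f^{n}(f-1)f'\cdot g^{n}(g-1)g'\equiv 1$ forces $f$, $f-1$ and $f'$ to be zero-free, so $f=e^{\alpha}$ with $\alpha$ entire, whence $e^{\alpha}$ omits both $0$ and $1$ and $\alpha$ omits the infinite set $2\pi i\,\mathbb{Z}$ --- impossible by Picard's theorem unless $\alpha$, hence $f$, is constant. Therefore $F\equiv G$, i.e. $f^{n}(f-1)f'\equiv g^{n}(g-1)g'$, and integrating gives
\[
P(f)\equiv P(g)+c,\qquad P(w)=\frac{w^{n+2}}{n+2}-\frac{w^{n+1}}{n+1}=\frac{w^{n+1}\bigl((n+1)w-(n+2)\bigr)}{(n+1)(n+2)},
\]
with $c$ a constant. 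To see $c=0$: as $P$ has degree $n+2$ we get $T(r,f)=T(r,g)+O(1)$; the zeros of $P(g)$ lie over $g\in\{0,\frac{n+2}{n+1}\}$, so $\overline{N}(r,0;P(g))\le 2T(r,f)+O(1)$; but $P(g)=P(f)-c$, so these are exactly the points where $f$ takes one of the $k$ distinct roots of $P(w)=c$, and the second main theorem applied to $f$ at those $k$ roots together with $\infty$ gives $(k-1)T(r,f)\le\overline{N}(r,0;P(g))+S(r,f)\le 2T(r,f)+S(r,f)$, forcing $k\le 3$. Since $P'(w)=w^{n}(w-1)$, the equation $P(w)=c$ has $2$ distinct roots when $c=0$ and otherwise $n+1$ or $n+2$; as $n\ge 11$, only $c=0$ gives $k\le 3$, so $P(f)\equiv P(g)$. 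Finally set $h=f/g$. If $h$ were non-constant then $h^{n+2}\not\equiv 1$ and $P(f)\equiv P(g)$ rearranges to $g=\frac{n+2}{n+1}\cdot\frac{h^{n+1}-1}{h^{n+2}-1}$; since $\gcd(n+1,n+2)=1$, after cancelling the common factor $h-1$ the right side has a pole wherever $h$ equals one of the $n+1$ values $\zeta\ne 1$ with $\zeta^{n+2}=1$, so entireness of $g$ would force the non-constant meromorphic $h$ to omit $n+1\ge 12$ values, contradicting Picard's theorem. Hence $h\equiv t$ is constant, and then $g(t^{n+2}-1)=\frac{n+2}{n+1}(t^{n+1}-1)$ with $g$ transcendental forces $t^{n+2}=t^{n+1}=1$, i.e. $t=1$; thus $f\equiv g$.

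I expect the real obstacle to lie in the first two paragraphs: one has to squeeze enough out of the zero distribution of $F$ --- in particular the truncated estimate $N_{2}(r,0;F)\le 4T(r,f)+S(r,f)$, in which the ramification term of the second main theorem does the essential work, together with the growth bound $T(r,F)\ge nT(r,f)+S(r,f)$ --- for the $CM$-sharing dichotomy to apply when $n\ge 11$. Once that dichotomy is available, ruling out $FG\equiv 1$ and deducing $f\equiv g$ from $F\equiv G$ are routine applications of Picard's theorem and the second main theorem.
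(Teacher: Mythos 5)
Your argument is correct, but note that the paper offers no proof of Theorem A to compare against: it is quoted verbatim from Fang--Hong \cite{Fan & Hon-IJPAM-2001}, so your proof has to stand on its own, and it does. The route you take is the classical Yang--Hua one: bound $T(r,F)$ from below by $nT(r,f)+S(r,f)$ and $N_2(r,0;F)$ from above, feed the resulting deficiency inequality into the standard dichotomy for two functions sharing $1$ $CM$ to get $F\equiv G$ or $FG\equiv 1$, kill $FG\equiv 1$ by Picard, and then analyse $P(f)\equiv P(g)+c$. Every step checks out: your bound $N_2(r,0;F)\le 4T(r,f)+S(r,f)$ is attainable (use $\min(2p-1,2)\le p$ for the contribution of a $p$-fold zero of $f-1$, together with the ramification term of the second main theorem for $N_0(r,0;f')$), and even the cruder bound $5T(r,f)$ would still give the required strict inequality for $n\ge 11$; your determination of $c=0$ via counting the distinct roots of $P(w)=c$ using $P'(w)=w^{n}(w-1)$ is correct, as is the Picard argument showing $h=f/g$ cannot omit the $n+1$ roots of unity $\zeta\ne 1$ with $\zeta^{n+2}=1$. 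This is genuinely different machinery from what the present paper deploys for its own generalizations (Theorems \ref{th.1} and \ref{th.2}): there the author sets $f_1+f_2+f_3=1$ with $f_1=\mathcal{P}_*(f)f_*^{\prime}/\alpha$, $f_2=\mathcal{H}$, $f_3=-\mathcal{H}\mathcal{P}_*(g)g_*^{\prime}/\alpha$ and splits on linear (in)dependence via Lemmas \ref{l2.1} and \ref{l2.5}, which is what one needs when the shared quantity is a small function $\alpha(z)$ rather than a constant; for the constant-value, entire-function setting of Theorem A your $N_2$--deficiency dichotomy is the more economical tool and gives a complete, self-contained proof.
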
\par The following example shows that in \emph{Theorem A} one simply can not replace ``entire" by ``meromorphic " functions.
\begin{exm}
	Let \beas f(z)=\frac{(n+2)}{(n+1)}\frac{e^z+\ldots+e^{(n+1)z}}{1+e^z+\ldots+e^{(n+1)z}} \eeas and \beas f(z)=\frac{(n+2)}{(n+1)}\frac{1+e^z+\ldots+e^{nz}}{1+e^z+\ldots+e^{(n+1)z}}. \eeas\par It is clear that $ f(z)=e^zg(z) $. Also $ f^n(f-1)f^{\prime} $ and $ g^n(g-1)g^{\prime} $ share $ 1 $ $ CM $ but note that $ f\not\equiv g $.
\end{exm}

In $ 2004 $, \emph{Lin-Yi} \cite{Lin & Yi-CVTA-2004} extended Theorem A and obtained the following results.
\begin{theoB}\cite{Lin & Yi-CVTA-2004}
	Let $ f $ and $ g $ be two transcendental entire functions, $ n\geq 7 $ an integer. If $ f^n(f-1)f^{\prime} $ and $ g^n(g-1)g^{\prime} $ share $ z $ $ CM $, then $ f\equiv g $.
\end{theoB}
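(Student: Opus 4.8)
\medskip

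\noindent\textbf{Proof proposal.} Set $F=f^{n}(f-1)f'$ and $G=g^{n}(g-1)g'$, and note that $F=\bigl(P(f)\bigr)'$ and $G=\bigl(P(g)\bigr)'$ with
\[
P(w)=\frac{w^{n+2}}{n+2}-\frac{w^{n+1}}{n+1}=\frac{w^{n+1}\bigl((n+1)w-(n+2)\bigr)}{(n+1)(n+2)}.
\]
Since $f$ and $g$ are entire, so are $F$ and $G$, and the hypothesis says exactly that $F-z$ and $G-z$ are entire functions with the same zeros counted with multiplicity; hence $\frac{F-z}{G-z}=e^{\alpha}$ for an entire $\alpha$. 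The first goal is to upgrade this to $F\equiv G$. For this I would collect the standard Nevanlinna estimates: by the logarithmic derivative lemma and the first and second main theorems, $T(r,F)=(n+2)T(r,f)+S(r,f)$, $\overline N(r,F)=0$, and $\overline N(r,1/F)\le\overline N(r,1/f)+\overline N\bigl(r,1/(f-1)\bigr)+\overline N(r,1/f')\le 3T(r,f)+S(r,f)$, and likewise for $G$; consequently $\overline N(r,1/F)+\overline N(r,1/G)$ is strictly smaller than $\max\{T(r,F),T(r,G)\}$ outside a set of finite measure, and it is exactly here that the hypothesis $n\ge 7$ is used. Dividing by $z$ so that $F/z$ and $G/z$ share the value $1$ CM, and applying the classical uniqueness lemma for two functions sharing one value CM under a deficiency condition of this strength, one concludes that either $FG\equiv z^{2}$ or $F\equiv G$. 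The first alternative is impossible: $F$ and $G$ are transcendental entire, so $FG$ is transcendental entire and cannot equal the polynomial $z^{2}$; concretely, $FG\equiv z^{2}$ would force $G=g^{n}(g-1)g'$ to have at most one zero, contradicting that a transcendental entire function attains $0$ and $1$ with at most one finite exception. Hence $F\equiv G$.

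Next, integrating $F\equiv G$ gives $P(f)=P(g)+C$ for a constant $C$, and I claim $C=0$. If $C\neq 0$, apply the second main theorem to the entire function $P(f)$ with the three targets $\infty$, $0$ and $-C$: the zeros of $P(f)$ lie among the zeros of $f$ and of $(n+1)f-(n+2)$, while the zeros of $P(f)+C=P(g)$ lie among the zeros of $g$ and of $(n+1)g-(n+2)$, so one obtains $(n+2)T(r,f)\le 2T(r,f)+2T(r,g)+S(r)$, that is $nT(r,f)\le 2T(r,g)+S(r)$, and symmetrically $nT(r,g)\le 2T(r,f)+S(r)$; adding these gives $(n-2)\bigl(T(r,f)+T(r,g)\bigr)\le S(r)$, which is absurd since $n\ge 7$ and $f,g$ are transcendental. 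Hence $C=0$ and $P(f)\equiv P(g)$.

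Finally, from $f^{n+1}\bigl((n+1)f-(n+2)\bigr)\equiv g^{n+1}\bigl((n+1)g-(n+2)\bigr)$ put $h=f/g$; substituting $f=hg$ and cancelling $g^{n+1}$ gives
\[
\bigl(h^{n+2}-1\bigr)\,g=\frac{n+2}{n+1}\bigl(h^{n+1}-1\bigr).
\]
If $h$ is constant, then since $g$ is transcendental we must have $h^{n+2}=1$ and $h^{n+1}=1$, hence $h=1$, i.e. $f\equiv g$. If $h$ is non-constant, then $h^{n+2}\not\equiv 1$, and, cancelling the only common factor $h-1$ of $h^{n+1}-1$ and $h^{n+2}-1$ (possible because $\gcd(n+1,n+2)=1$),
\[
g=\frac{n+2}{n+1}\cdot\frac{\prod_{k=1}^{n}\bigl(h-\eta^{k}\bigr)}{\prod_{j=1}^{n+1}\bigl(h-\zeta^{j}\bigr)},\qquad \zeta=e^{2\pi i/(n+2)},\quad \eta=e^{2\pi i/(n+1)},
\]
where the denominator has $n+1$ distinct roots, none of which is a root of the numerator. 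Since $g$ is entire it has no poles, so $h$ must omit all $n+1\ (\ge 8)$ values $\zeta^{1},\dots,\zeta^{n+1}$, contradicting Picard's theorem for the meromorphic function $h$. Hence $h$ is constant, and therefore $f\equiv g$, which completes the proof.

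The main obstacle is the first step. Assembling the precise Nevanlinna estimates for $F$ and $G$ and quoting the CM-sharing uniqueness lemma in a form whose deficiency threshold is met exactly when $n\ge 7$ requires tracking the constants carefully, not just their orders of magnitude. A further subtlety, peculiar to this problem, is that the shared object is the small function $z$ rather than a constant, so throughout one has to carry the error terms $S(r)$ (and the contribution $T(r,z)=O(\log r)$) and to justify the normalisation $F\mapsto F/z$, $G\mapsto G/z$ near the origin.
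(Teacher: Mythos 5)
Your argument is essentially correct, but it follows a different route from the one this paper takes. The paper does not prove Theorem B directly (it is quoted from Lin--Yi); instead it proves the more general Theorem 1.2, whose entire-function machinery specializes to this case (and in fact yields $n\ge 6$). There, the key step is the three-function decomposition $f_1+f_2+f_3=1$ with $f_1=F/\alpha$, $f_2=\mathcal{H}=(F-\alpha)/(G-\alpha)$, $f_3=-\mathcal{H}G/\alpha$, handled via the Xu--L\"u--Yi lemma on linearly independent functions summing to $1$ (Lemmas 2.1, 2.2, 2.5), with a case analysis according to linear (in)dependence. You instead invoke the classical Yang--Hua two-function lemma for $F/z$ and $G/z$ sharing $1$ CM, which is cleaner in the entire case and whose deficiency condition is comfortably met here (your counts give a threshold of $n\ge 6$, so $n\ge 7$ is fine); the price is that you must quote that lemma with its precise constants, which you candidly flag as the remaining work. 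Your endgame --- integrate $F\equiv G$ to $P(f)=P(g)+C$, kill $C$ by the second main theorem, then set $h=f/g$ and apply Picard to the $n+1\ge 8$ omitted values --- is the same as the paper's Subcase 2.2 (which phrases the $C\ne 0$ exclusion as $\Theta(0,\mathcal{F})+\Theta(c,\mathcal{F})+\Theta(\infty,\mathcal{F})>2$, an equivalent computation). One genuine error to fix: your assertion that $FG\equiv z^2$ is impossible because ``$FG$ is transcendental entire and cannot equal a polynomial'' is false as reasoning --- the product of two transcendental entire functions can be a polynomial, e.g.\ $ze^{z}\cdot ze^{-z}=z^{2}$. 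Fortunately your ``concrete'' backup is the correct argument: $FG\equiv z^{2}$ forces all zeros of $g$ and of $g-1$ to lie at the origin, whereas the second main theorem gives $\overline N(r,1/g)+\overline N(r,1/(g-1))\ge T(r,g)+S(r,g)$ for a transcendental entire $g$, a contradiction; keep that argument and delete the first sentence.
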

\begin{theoC}\cite{Lin & Yi-CVTA-2004}
	Let $ f $ and $ g $ be two transcendental meromorphic functions, $ n\geq 12 $ an integer. If $ f^n(f-1)f^{\prime} $ and $ g^n(g-1)g^{\prime} $ share $ z $ $ CM $, then either $ f\equiv g $ or \beas g=\frac{(n+2)(1-h^{n+1})}{(n+1)(1-h^{n+2})},\;\;\;\; f=\frac{(n+2)h(1-h^{n+1})}{(n+1)(1-h^{n+2})},  \eeas where $ h $ is a non-constant meromorphic function.
\end{theoC}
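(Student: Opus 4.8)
The plan is to reduce the hypothesis to one of the two expected identities $F_1\equiv G_1$ or $F_1G_1\equiv z^2$, where $F_1:=f^n(f-1)f'$, $G_1:=g^n(g-1)g'$, and then to read off the conclusion from the first while discarding the second. The starting observations are the identity $F_1=P(f)'$, $G_1=P(g)'$ with $P(x):=\frac{x^{n+2}}{n+2}-\frac{x^{n+1}}{n+1}$, and the elementary growth estimates $(n+2)\,T(r,f)\le T(r,F_1)+S(r,f)$ and $T(r,F_1)\le(n+3)\,T(r,f)+S(r,f)$, obtained by comparing $F_1$ with $P(f)$ via the logarithmic derivative lemma (and similarly for $G_1$). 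Since $f^n(f-1)f'$ and $g^n(g-1)g'$ share $z$ CM, the functions $F:=F_1/z$ and $G:=G_1/z$ share the value $1$ CM up to an $S(r)=O(\log r)$ error concentrated at $z=0$; in particular $S(r,F)=S(r,f)$, $S(r,G)=S(r,g)$, and $T(r,f)$ and $T(r,g)$ have the same order of growth.

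Next I would carry out the familiar analysis for two functions sharing $1$ CM, applied to $F$ and $G$. Set $H:=\bigl(\frac{F''}{F'}-\frac{2F'}{F-1}\bigr)-\bigl(\frac{G''}{G'}-\frac{2G'}{G-1}\bigr)$. If $H\not\equiv0$, estimating the counting function of $H$ and invoking the second fundamental theorem gives an inequality of the shape
\[
T(r,F)\le N_2(r,0;F)+N_2(r,\infty;F)+N_2(r,0;G)+N_2(r,\infty;G)+S(r,f)+S(r,g).
\]
Because the zero/pole structure of $F$ is rigid — a zero of $f$ of order $p$ yields a zero of $F$ of order $(n+1)p-1\ge n$, a $1$-point of $f$ of order $p$ a zero of order $2p-1$, a pole of $f$ of order $k$ a pole of order $(n+2)k+1$, and the extra zeros of $f'$ are absorbed by the ramification term of the second fundamental theorem — one bounds $N_2(r,0;F)+N_2(r,\infty;F)$ (and its analogue for $G$) by a fixed multiple of $T(r,f)$ (resp.\ $T(r,g)$); together with the lower bound $(n+2)T(r,f)\le T(r,F)+S(r,f)$ and the comparability of $T(r,f)$ and $T(r,g)$ this contradicts $n\ge12$. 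Hence $H\equiv0$, and integrating twice gives a Möbius relation between $F$ and $G$; the standard case split then leaves only $F\equiv G$ or $FG\equiv1$, i.e.\ $F_1\equiv G_1$ or $F_1G_1\equiv z^2$, the intermediate cases being excluded again by the second fundamental theorem because they would manufacture a finite value with too many preimages.

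To rule out $F_1G_1\equiv z^2$: away from $z=0$ this identity makes every zero of $f$ a pole of $g$ and conversely, and matching orders forces high multiplicities. A zero of $f$ of order $p$ gives $F_1$ a zero of order $(n+1)p-1$, which must be a pole of $G_1$ of order $(n+2)k+1$, whence $(n+2)\mid(p+2)$ and $p\ge n$; a pole of $f$ of order $k$ gives $F_1$ a pole of order $(n+2)k+1\ge n+3$, which must be a zero of $G_1$ of that order, forcing there a zero of $g$ of order $\ge2$, a zero of $g-1$ of order $\ge\frac{n+4}{2}$, or a zero of $g'$ of order $\ge n+3$; and a $1$-point of $f$ must have order $\ge\frac{n+4}{2}$. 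Hence $\overline N(r,0;f)$, $\overline N(r,1;f)$, $\overline N(r,\infty;f)$ (and the same terms for $g$) are all $O(1/n)\cdot T$, so the second fundamental theorem $T(r,f)\le\overline N(r,0;f)+\overline N(r,1;f)+\overline N(r,\infty;f)+S(r,f)$ for $f$ and for $g$, combined with $T(r,f)\asymp T(r,g)$, is impossible for $n\ge12$.

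Finally, from $F_1\equiv G_1$ integration gives $P(f)=P(g)+c$ for a constant $c$. If $c\ne0$, let $a_1,\dots,a_q$ be the distinct zeros of $P(x)-c$ ($q=n+2$, or $q=n+1$ in the exceptional case $c=P(1)$); since $f=a_i$ forces $P(g)=0$, hence $g\in\{0,\frac{n+2}{n+1}\}$ (the zero set of $P$), the second fundamental theorem for $f$ on the targets $a_1,\dots,a_q$ gives $(q-2)\,T(r,f)\le\overline N(r,0;g)+\overline N(r,\tfrac{n+2}{n+1};g)+S(r,f)\le2T(r,g)+S(r,f)$, and symmetrically for $g$; since $q-2\ge n-1\ge11$, this contradicts $n\ge12$. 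Thus $c=0$ and $P(f)=P(g)$, i.e.\ $g^{n+1}\bigl(\frac{h^{n+2}-1}{n+2}\,g-\frac{h^{n+1}-1}{n+1}\bigr)=0$ with $h:=f/g$. If $h$ is constant, then $h^{n+1}=h^{n+2}=1$, so $h=1$ and $f\equiv g$; if $h$ is non-constant then $h^{n+2}\not\equiv1$ and one reads off $g=\frac{(n+2)(1-h^{n+1})}{(n+1)(1-h^{n+2})}$ and $f=hg=\frac{(n+2)h(1-h^{n+1})}{(n+1)(1-h^{n+2})}$, the second alternative of the theorem. I expect the principal difficulties to be the precise bookkeeping of the truncated counting functions in the case $H\not\equiv0$ and the multiplicity analysis disposing of $F_1G_1\equiv z^2$; that is exactly where the hypothesis $n\ge12$ is consumed.
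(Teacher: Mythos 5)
First, a point of orientation: the paper does not prove Theorem C at all --- it is quoted from Lin--Yi \cite{Lin & Yi-CVTA-2004} --- so there is no in-paper proof to match. The closest thing is the paper's own proof of Theorem \ref{th.1}, which handles the analogous configuration by the decomposition $f_1+f_2+f_3=1$ with $f_1=F_1/\alpha$, $f_2=\mathcal H$, $f_3=-\mathcal H G_1/\alpha$ and the linear-(in)dependence machinery of Lemmas \ref{l2.1} and \ref{l2.5}; that is also essentially Lin--Yi's route, and it is where the second alternative (the $h$-parametrization) emerges from the dependence relation $c_1f_1+c_2f_2+c_3f_3=0$. Your skeleton is instead the classical Yang--Hua one: the auxiliary function $H$, the $N_2$-counting lemma, the Mobius relation when $H\equiv0$, and the algebraic endgame $P(f)=P(g)+c$. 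Most of your proposal is sound: the reduction $F_1=P(f)'$ with $(n+2)T(r,f)\le T(r,F_1)+S(r,f)$ is right; the multiplicity analysis killing $F_1G_1\equiv z^2$ is correct and closes comfortably for $n\ge12$ (note that, by symmetry, zeros of $g$ in that case actually have order $\ge n$, not just $\ge2$, which you need to make all three truncated terms genuinely $O(1/n)T$); and the endgame --- $c\ne0$ excluded by the second fundamental theorem on the $q\ge n+1$ roots of $P(x)-c$, then $c=0$ giving either $h=1$ or the stated parametrization --- is exactly right, including your correct retention of the second alternative rather than its elimination.

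The genuine gap is the quantitative step in the case $H\not\equiv0$. Writing out the bound you invoke, one gets
\[
N_2(r,0;F)+N_2(r,\infty;F)\le 2\ol N\Big(r,\frac1f\Big)+N\Big(r,\frac{1}{f-1}\Big)+2N_0\Big(r,\frac{1}{f'}\Big)+2\ol N(r,f)+O(\log r),
\]
and with $N_0(r,1/f')\le\ol N(r,1/f)+\ol N(r,f)+S(r,f)$ this is at best about $9\,T(r,f)$; summing the $F$- and $G$-contributions and comparing with $T(r,F)\ge(n+2)T(r,f)+S(r,f)$ yields a contradiction only for roughly $n\ge17$, not $n\ge12$. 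So the ``precise bookkeeping'' you defer is not a formality: as set up, your count does not reach the stated threshold, and the ramification term of the second fundamental theorem does not obviously absorb enough to fix it. To get constants of the right size one needs either a sharper sharing lemma or the three-function decomposition used in this paper (which, in the corresponding Case 1, delivers $(p-q-8)T_*(r)\le o(T(r))$, i.e.\ $n\ge10$ already suffices for that sub-case when $q=1$). I would either switch to that decomposition for the ``not identically related'' case or redo the $N_2$ estimates with explicit cancellation before claiming the bound $n\ge12$.
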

\begin{theoD}\cite{Lin & Yi-CVTA-2004}
	Let $ f $ and $ g $ be two transcendental meromorphic functions, $ n\geq 13 $ an integer. If $ f^n(f-1)^2f^{\prime} $ and $  g^n(g-1)^2g^{\prime} $ share $ z $ $ CM $, then $ f\equiv g $.
\end{theoD}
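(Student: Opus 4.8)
\emph{Plan of proof.} The plan is to adapt the Nevanlinna machinery that underlies Theorems A--C, with the modifications forced by the double factor $(f-1)^{2}$. Put
\[
F=\frac{f^{n}(f-1)^{2}f^{\prime}}{z},\qquad G=\frac{g^{n}(g-1)^{2}g^{\prime}}{z}.
\]
Since $f^{n}(f-1)^{2}f^{\prime}$ and $g^{n}(g-1)^{2}g^{\prime}$ share $z$ $CM$, the functions $F$ and $G$ share the value $1$ $CM$ up to an error of size $O(\log r)=S(r)$ coming from $z=0$ and from the poles of $f$, $g$. I would first record the local structure of $F$ (and symmetrically of $G$): a pole of $f$ of multiplicity $q$ is a pole of $F$ of multiplicity $(n+3)q+1\ge n+4$; a zero of $f$ of multiplicity $m$ is a zero of $F$ of multiplicity $(n+1)m-1\ge n$; a zero of $f-1$ is a zero of $F$ of multiplicity $\ge 2$; and the remaining zeros of $F$ are simple zeros of $f^{\prime}$. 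Clunie's lemma and the Mokhon'ko relations give $T(r,F)=(n+3)\,T(r,f)+S(r,f)$ and the analogue for $G$, so $T(r,f)$, $T(r,g)$, $T(r,F)$, $T(r,G)$ are mutually comparable and all error terms may be written as a single $S(r)$.

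Next I introduce the auxiliary function
\[
H=\left(\frac{F^{\prime\prime}}{F^{\prime}}-\frac{2F^{\prime}}{F-1}\right)-\left(\frac{G^{\prime\prime}}{G^{\prime}}-\frac{2G^{\prime}}{G-1}\right),
\]
for which $m(r,H)=S(r)$ by the logarithmic derivative lemma. A local computation shows that $H$ has a zero at every common \emph{simple} $1$-point of $F$ and $G$, while — because the $1$-points are shared $CM$ — the poles of $H$ are confined to multiple poles of $F$ or $G$, multiple zeros of $F$ or $G$, and zeros of $F^{\prime}$, $G^{\prime}$ lying over neither $0$, $1$ nor $\infty$. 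The proof then branches on whether $H\equiv 0$.

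If $H\not\equiv 0$, then $\ol N_{1)}(r,1/(F-1))\le N(r,H)+S(r)$, and $N(r,H)$ is dominated by the truncated counting functions just listed. Translating each of them back to $f$ and $g$ through the local structure — a zero of $f$ being an $\ge n$-fold zero of $F$, a pole of $f$ an $\ge(n+4)$-fold pole of $F$, and so on, so that each such term carries a factor of order $1/n$ — and then feeding the resulting bound for $\ol N_{1)}(r,1/(F-1))$, together with the identity $N(r,1/(F-1))=N(r,1/(G-1))$, into the second main theorem applied to $f$ and to $g$, one reaches an inequality of the form $c\,(T(r,f)+T(r,g))\le S(r)$ in which the constant $c$ is \emph{positive precisely because} $n\ge 13$ makes the competing fractions strictly smaller. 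The resulting contradiction excludes $H\not\equiv 0$; this bookkeeping, and the pinning down of the sharp lower bound on $n$, is the technical heart of the argument and the step I expect to fight with.

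If $H\equiv 0$, integrating twice gives $\dfrac{1}{F-1}=\dfrac{A}{G-1}+B$ with constants $A\ (\ne 0)$, $B$. When $B\ne 0$, $F$ has its poles precisely where $G$ takes the finite value $(A+B)/B$, and these are $\ge(n+4)$-fold, which with the second main theorem for $G$ and its symmetric counterpart forces $f$ and $g$ to be entire and then, by a Picard-type argument, contradicts $n\ge 13$; so $B=0$. When $B=0$ and $A\ne 1$, the $\ge n$-fold zeros of $F$ are $(1-A)$-points of $G$, and the same kind of reasoning again yields a contradiction; hence $B=0$, $A=1$, i.e. $F\equiv G$. Thus $f^{n}(f-1)^{2}f^{\prime}\equiv g^{n}(g-1)^{2}g^{\prime}$, so $\Psi(f)\equiv\Psi(g)+C$ where $\Psi(x)=\frac{x^{n+3}}{n+3}-\frac{2x^{n+2}}{n+2}+\frac{x^{n+1}}{n+1}$ and $\Psi^{\prime}(x)=x^{n}(x-1)^{2}$. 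One shows $C=0$ first: if $C\ne 0$, the zeros of $f$ would be highly multiple $g$-values taken among finitely many points, violating the second main theorem once $n\ge 13$ (the degenerate cases in which $f$ or $g$ omits $0$ or $1$ being disposed of separately). Finally, since $\Psi^{\prime}$ has exactly the two distinct zeros $0$ and $1$ with $\Psi(0)=0\ne\Psi(1)=\frac{2}{(n+1)(n+2)(n+3)}$, the identity $\Psi(f)\equiv\Psi(g)$ forces $f\equiv g$, which finishes the proof; the elimination of the Picard-exceptional subcases here is the second place I expect resistance.
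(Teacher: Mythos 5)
First, a caveat on the comparison: Theorem D is quoted from Lin--Yi and the present paper contains no proof of it; the nearest thing is the proof of Theorem \ref{th.1}, which subsumes a small-function version of Theorem D but by a genuinely different mechanism. There the author writes $f_1+f_2+f_3=1$ with $f_1=\mathcal{P}_*(f)f_*'/\alpha$, $f_2=\mathcal{H}$, $f_3=-\mathcal{H}\mathcal{P}_*(g)g_*'/\alpha$ and splits on linear (in)dependence of the $f_i$, using the Xu--L\"{u}--Yi and Yi lemmas (Lemmas \ref{l2.1}, \ref{l2.5}) rather than your auxiliary function $H=\bigl(F''/F'-2F'/(F-1)\bigr)-\bigl(G''/G'-2G'/(G-1)\bigr)$. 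Your route is the classical Yang--Hua one and is in fact the route Lin--Yi themselves take; it is perfectly viable for this statement. The linear-independence route buys a cleaner treatment of the degenerate cases (they become ``$f_2$ or $f_3$ constant'') at the price of the $N_2$-bookkeeping of Lemma \ref{l2.1}; your route concentrates all the difficulty in the estimate of $N(r,H)$. Both funnel into the same terminal identity $F\equiv G$.

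The genuine gap is at that terminal identity. From $f^{n}(f-1)^{2}f'\equiv g^{n}(g-1)^{2}g'$ you integrate to $\Psi(f)\equiv\Psi(g)+C$, dispose of $C\neq 0$, and then assert that $\Psi(f)\equiv\Psi(g)$ forces $f\equiv g$ ``since $\Psi'$ has exactly the two distinct zeros $0$ and $1$ with $\Psi(0)\neq\Psi(1)$.'' That criterion alone does not make $\Psi$ a uniqueness polynomial for meromorphic functions: critical injectivity is necessary but not sufficient, and one must additionally verify Fujimoto's inequality $q_1q_2>q_1+q_2$ for the multiplicities $(q_1,q_2)=(n,2)$ of the critical points (true here since $n\geq 13$), or else argue directly. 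The direct argument --- which is what Lin--Yi do and what this paper does in its Subcase 2.2.2 --- substitutes $h=f/g$, rewrites $\Psi(f)=\Psi(g)$ as a quadratic in $g$ with coefficients built from $h^{n+3}-1$, $h^{n+2}-1$, $h^{n+1}-1$, and then needs a dedicated lemma (here Lemma \ref{lm9}) showing that the discriminant $c^{2}(z^{p-q}-1)^{2}-4b(z^{p-2q}-1)(z^{p}-1)$ has exactly one multiple zero, namely $z=1$, before Picard/second-main-theorem reasoning eliminates nonconstant $h$; the constant-$h$ case then uses $\gcd(n+3,n+2,n+1)=1$. None of this is in your sketch, and it cannot be waved through: it is exactly the step that separates the $m=2$ case from the $m=1$ case (Theorem C), where a second, non-trivial conclusion survives. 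Secondarily, the $H\not\equiv 0$ bookkeeping that actually produces the bound $n\geq 13$ is only promised, not performed, so as written the proposal establishes the theorem for no explicit $n$.
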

To improve all the above mentioned results, natural questions arise as follows.
\begin{ques}
	Keeping all other conditions intact, is it possible to reduce further  the lower bounds of $ n $ in the above results ?
\end{ques}
\begin{ques}
	Is it also possible to replace the transcendental meromorphic (entire) functions by a more general class of meromorphic (entire) functions in all the above mentioned results ?
\end{ques}\par 
In $ 2013 $, \emph{Bhoosnurmath-Pujari} \cite{Bho & Puj-IJA-2013}, answered the above questions affirmatively and obtained the following results.

\begin{theoE}\cite{Bho & Puj-IJA-2013}
	Let $ f $ and $ g $ be two non-constant meromorphic functions, $ n\geq 11 $ be an integer. If $ f^n(f-1)f^{\prime} $ and $ g^n(g-1)g^{\prime} $ share $ z $ $ CM $, $ f $ and $ g $ share $ \infty $ $ IM $, then either $ f\equiv g $ or \beas    g=\frac{(n+2)(1-h^{n+1})}{(n+1)(1-h^{n+2})},\;\;\;\; f=\frac{(n+2)h(1-h^{n+1})}{(n+1)(1-h^{n+2})},  \eeas where $ h $ is a non-constant meromorphic function.
\end{theoE}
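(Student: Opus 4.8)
The plan is to follow the route that has become standard for problems of this type: reduce, through a Wronskian-type auxiliary function, to the situation in which $f^{\,n}(f-1)f'$ and $g^{\,n}(g-1)g'$ coincide, and then integrate and classify. Throughout I would abbreviate $\mathcal F=f^{\,n}(f-1)f'/z$ and $\mathcal G=g^{\,n}(g-1)g'/z$, so the hypotheses read: $\mathcal F$ and $\mathcal G$ share $1$ $CM$, and $f,g$ share $\infty$ $IM$. Two elementary estimates are used throughout: the upper bound $T\bigl(r,f^{\,n}(f-1)f'\bigr)\le(n+3)T(r,f)+S(r,f)$, and — writing $\Phi=\tfrac{f^{n+2}}{n+2}-\tfrac{f^{n+1}}{n+1}$, so $\Phi'=f^{\,n}(f-1)f'$, and feeding $\Phi'/\Phi$ into the lemma on the logarithmic derivative together with $\overline N(r,1/\Phi)\le2T(r,f)+O(1)$ — the lower bound $T(r,\mathcal F)\ge(n-1)T(r,f)+S(r,f)$; in particular $T(r,\mathcal F)\asymp T(r,f)$ and $T(r,\mathcal G)\asymp T(r,g)$, so $S(r,\mathcal F)=S(r,f)$ and $S(r,\mathcal G)=S(r,g)$.

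The first real step is to form
\[
H=\frac{\mathcal F''}{\mathcal F'}-\frac{2\mathcal F'}{\mathcal F-1}-\left(\frac{\mathcal G''}{\mathcal G'}-\frac{2\mathcal G'}{\mathcal G-1}\right),
\]
note $m(r,H)=S(r,f)+S(r,g)$, and prove $H\equiv0$. Assume not. A local computation shows that $\tfrac{\mathcal F''}{\mathcal F'}-\tfrac{2\mathcal F'}{\mathcal F-1}$ and $\tfrac{\mathcal G''}{\mathcal G'}-\tfrac{2\mathcal G'}{\mathcal G-1}$ each have residue $-2$ at a common simple $1$-point, so $H$ vanishes there; and because $\mathcal F,\mathcal G$ share $1$ $CM$ the same residue cancellation makes $H$ holomorphic at \emph{every} common $1$-point. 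Hence $N_{1)}\bigl(r,1/(\mathcal F-1)\bigr)\le N(r,1/H)\le N(r,H)+S(r,f)+S(r,g)$, while $H$ (which has only simple poles) can have poles only over common poles of $f,g$ at which the two pole multiplicities differ, over multiple zeros of $\mathcal F$ or of $\mathcal G$, over the zeros of $\mathcal F'$ (resp. $\mathcal G'$) lying over none of $0,1,\infty$ of $\mathcal F$ (resp. $\mathcal G$), and over $z=0$ (harmless). Translating these loci back to $f,g$ via the factorisation $f^{\,n}(f-1)f'$ (e.g. a multiple zero of $\mathcal F$ forces $f=0$, $f=1$, or $f'=0$), combining the resulting bound for $N(r,H)$ with the Second Main Theorem for $\mathcal F$ and for $\mathcal G$, using $T(r,\mathcal F)\ge(n-1)T(r,f)$, and adding the two inequalities, one is led to an estimate of the shape $(n-c_0)\bigl(T(r,f)+T(r,g)\bigr)\le S(r,f)+S(r,g)$ with $c_0\le10$ an absolute constant — impossible for $n\ge11$. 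This pole count is the technical heart of the proof and the step I expect to be most delicate: precisely because $f,g$ share $\infty$ only $IM$, at a common pole of unequal multiplicity $\mathcal F,\mathcal G$ acquire high-order poles and $H$ a genuine pole, so the various reduced counting functions must be tracked with the right weights.

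Once $H\equiv0$, integrating twice gives $\dfrac1{\mathcal F-1}=\dfrac A{\mathcal G-1}+B$ with $A\ne0$, so $\mathcal F=\dfrac{(B+1)\mathcal G+(A-B-1)}{B\mathcal G+(A-B)}$ is a Möbius transform of $\mathcal G$, and the plan is to eliminate every case but $\mathcal F\equiv\mathcal G$. If $B\ne0$, then $\mathcal F$ stays finite at $\infty$-points of $\mathcal G$, i.e. at the poles of $g$, hence at the poles of $f$ (the same set); but $\mathcal F=f^{\,n}(f-1)f'/z$ has a pole at each pole of $f$, so $f,g$ must be entire, and then $\mathcal G$ omits $\infty$ together with the finite value $(B-A)/B$, which the Second Main Theorem for $\mathcal G$ (with $0,(B-A)/B,\infty$, using $T(r,\mathcal G)\ge(n-1)T(r,g)$ and the $n$-fold zeros of $\mathcal G$ over zeros of $g$) contradicts — the degenerate sub-case $(B-A)/B=0$ being even easier. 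If $B=0$, then $\mathcal F=\tfrac1A\mathcal G+\tfrac{A-1}A$, and when $A\ne1$ the analogous counting (matching the zeros of $\mathcal F$ forced by $f^{\,n}$ against the $(1-A)$-points of $\mathcal G$, plus the symmetric estimate) again violates $n\ge11$. The only remaining case is $A=1,B=0$, i.e. $\mathcal F\equiv\mathcal G$, that is $f^{\,n}(f-1)f'=g^{\,n}(g-1)g'$.

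It remains to integrate: $\dfrac{f^{n+2}}{n+2}-\dfrac{f^{n+1}}{n+1}=\dfrac{g^{n+2}}{n+2}-\dfrac{g^{n+1}}{n+1}+c$ for some constant $c$. If $c\ne0$, apply the Second Main Theorem to $\psi=\tfrac{g^{n+2}}{n+2}-\tfrac{g^{n+1}}{n+1}$ with the three distinct values $0,-c,\infty$: since $\psi=\tfrac{g^{n+1}}{n+2}\bigl(g-\tfrac{n+2}{n+1}\bigr)$ and $\psi+c=\tfrac{f^{n+1}}{n+2}\bigl(f-\tfrac{n+2}{n+1}\bigr)$, the three counting functions are bounded by $2T(r,g)+O(1)$, $2T(r,f)+O(1)$, and $T(r,g)+O(1)$ respectively, so $(n-1)T(r,g)\le2T(r,f)+S(r,g)$; the symmetric inequality and addition give $(n-3)\bigl(T(r,f)+T(r,g)\bigr)\le S(r,f)+S(r,g)$, which is impossible, so $c=0$. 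Finally put $h=f/g$: substituting $f=hg$ into $\tfrac{f^{n+2}}{n+2}-\tfrac{f^{n+1}}{n+1}=\tfrac{g^{n+2}}{n+2}-\tfrac{g^{n+1}}{n+1}$ and dividing by $g^{\,n+1}\not\equiv0$ gives $\dfrac{(h^{n+2}-1)g}{n+2}=\dfrac{h^{n+1}-1}{n+1}$. If $h$ is constant this forces $h^{n+2}=h^{n+1}=1$, hence $h=1$ and $f\equiv g$ (any other constant value of $h$ would make $g$ constant). If $h$ is non-constant, then $h^{n+2}\not\equiv1$ (otherwise $h$ would be a constant root of unity), so
\[
g=\frac{(n+2)(1-h^{n+1})}{(n+1)(1-h^{n+2})},\qquad f=hg=\frac{(n+2)h(1-h^{n+1})}{(n+1)(1-h^{n+2})},
\]
which is exactly the exceptional conclusion.
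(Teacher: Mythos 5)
A preliminary remark: this paper never proves Theorem E --- it is quoted from \cite{Bho & Puj-IJA-2013} without proof --- so the only internal point of comparison is the proof of the paper's generalization, Theorem \ref{th.1}, and your route is genuinely different from that one. You follow the classical Yang--Hua/Lin--Yi scheme: normalize to $\mathcal F,\mathcal G$ sharing $1$ $CM$, form the auxiliary function $H$ from $\mathcal F''/\mathcal F'-2\mathcal F'/(\mathcal F-1)$ and its $\mathcal G$-analogue, rule out $H\not\equiv 0$ by a pole count, integrate $H\equiv 0$ twice to the M\"obius relation $1/(\mathcal F-1)=A/(\mathcal G-1)+B$, eliminate every case but $\mathcal F\equiv\mathcal G$, and then integrate once more and substitute $h=f/g$. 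The paper instead writes $f_1+f_2+f_3=1$ with $f_1=\mathcal P_*(f)f_*'/\alpha$, $f_2=\mathcal H$, $f_3=-\mathcal H\,\mathcal P_*(g)g_*'/\alpha$ and runs the linear-(in)dependence machinery of Lemmas \ref{l2.1}, \ref{l2.2} and \ref{l2.5}. Both methods funnel into the same terminal identity $f^n(f-1)f'\equiv g^n(g-1)g'$, and your endgame there is correct and complete: the $c\neq 0$ case dies by the second main theorem applied to $\psi$ with the values $0,-c,\infty$, and the $c=0$ case, after dividing by $g^{n+1}$, is exactly where the exceptional pair $g=\frac{(n+2)(1-h^{n+1})}{(n+1)(1-h^{n+2})}$, $f=hg$ emerges. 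Your approach has the advantage of actually exhibiting that second alternative (the paper's method, as written, suppresses it), and your use of the $IM$ sharing of $\infty$ --- to force $f,g$ entire when $B\neq 0$ --- is precisely where that hypothesis must enter; the paper's decomposition, on the other hand, is better adapted to the general weights $(p,q)$ and to small-function sharing.

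The one place your write-up falls short of a proof is the case $H\not\equiv 0$: you assert an inequality $(n-c_0)\bigl(T(r,f)+T(r,g)\bigr)\le S(r,f)+S(r,g)$ with ``$c_0\le 10$ an absolute constant'' without deriving $c_0$. Since the hypothesis $n\ge 11$ exists solely to beat that constant, this is not a cosmetic omission: one must actually bound $\overline N(r,H)$ by the reduced counting functions of (i) common poles of $f$ and $g$ of unequal multiplicity (the price of sharing $\infty$ only $IM$), (ii) the zeros of $f$, of $g$, of $f-1$, of $g-1$, and of $f'$, $g'$ away from $0,1,\infty$, feed this into the second main theorem for $\mathcal F$ and for $\mathcal G$, and combine with $T(r,\mathcal F)\ge (n-1)T(r,f)+S(r,f)$. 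The bookkeeping does close at $n\ge 11$ --- it is the computation carried out in Bhoosnurmath--Pujari's original paper --- but as it stands you have described the ledger rather than balanced it. Everything else checks out.
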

\begin{theoF}\cite{Bho & Puj-IJA-2013}
	Let $ f $ and $ g $ be two non-constant meromorphic functions, $ n\geq 12 $ an integer. If $ f^n(f-1)^2f^{\prime} $ and $ g^n(g-1)^2g^{\prime} $ share $ z $ $ CM $, $ f $ and $ g $ share $ \infty $ $ IM $, then $ f\equiv g $.
\end{theoF}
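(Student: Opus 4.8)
We outline the proof we would give. The plan is to transfer the statement to the classical setting of two meromorphic functions sharing the value $1$, via the substitution
\be
F=\frac{f^n(f-1)^2f^{\prime}}{z},\qquad G=\frac{g^n(g-1)^2g^{\prime}}{z}.
\ee
Writing $P(w)=\dfrac{w^{n+3}}{n+3}-\dfrac{2w^{n+2}}{n+2}+\dfrac{w^{n+1}}{n+1}$, one has $P^{\prime}(w)=w^n(w-1)^2$, $f^n(f-1)^2f^{\prime}=(P(f))^{\prime}$, and $P(w)=\dfrac{1}{n+3}w^{n+1}(w-\gamma_1)(w-\gamma_2)$, where the roots $\gamma_1\neq\gamma_2$ of the quadratic factor are non-zero and different from $1$. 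Since $f^n(f-1)^2f^{\prime}$ and $g^n(g-1)^2g^{\prime}$ share $z$ $CM$, the functions $F$ and $G$ share $1$ $CM$, apart from a discrepancy at $z=0$ that alters each counting function by only $O(\log r)$. The first step is to record, from the Valiron--Mohon'ko identity and the lemma on the logarithmic derivative, the growth comparison $T(r,F)\le (n+3)T(r,f)+\overline N(r,f)+S(r,f)$ together with a lower bound of the form $T(r,F)\ge (n+1)T(r,f)+S(r,f)$ (and similarly for $G$, $g$); since $f$ and $g$ share $\infty$ $IM$ we also have $\overline N(r,f)=\overline N(r,g)+S(r)$, and $T(r,f)$, $T(r,g)$ turn out to have the same order of growth.

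Next we would bring in the standard auxiliary function
\be
H=\Bigl(\frac{F^{\prime\prime}}{F^{\prime}}-\frac{2F^{\prime}}{F-1}\Bigr)-\Bigl(\frac{G^{\prime\prime}}{G^{\prime}}-\frac{2G^{\prime}}{G-1}\Bigr)
\ee
and distinguish two cases. If $H\not\equiv0$, the familiar inequality for two functions sharing $1$ $CM$ gives $T(r,F)\le N_2(r,1/F)+N_2(r,F)+N_2(r,1/G)+N_2(r,G)+S(r)$, and the same with $F$, $G$ interchanged. Since every zero of $f$ is a zero of $F$ of multiplicity $\ge n$, every zero of $f-1$ one of multiplicity $\ge 2$ and every pole of $f$ a pole of $F$ of multiplicity exceeding $2$, the truncated counting functions on the right collapse to fixed multiples of $\overline N(r,1/f)$, $\overline N(r,1/(f-1))$, $\overline N(r,f)$ and their $g$-analogues, together with the zeros of $f^{\prime}$ and $g^{\prime}$ over values $\neq0,1$, which are controlled by the second main theorem for $f$ and for $g$. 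Comparing the result with the lower bound for $T(r,F)$ and using that $f$, $g$ share $\infty$ $IM$, one reaches a contradiction once $n\ge12$; hence $H\equiv0$.

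Integrating $H\equiv0$ twice yields a bilinear relation $F=\dfrac{(C+1)G+(A-C-1)}{CG+(A-C)}$ with $A\neq0$; a comparison of characteristic functions eliminates every sub-case except $F\equiv G$ and $FG\equiv1$. If $FG\equiv1$, then $f^n(f-1)^2f^{\prime}\cdot g^n(g-1)^2g^{\prime}\equiv z^2$; comparing the zeros and poles of the two sides (here the hypothesis that $f$, $g$ share $\infty$ $IM$ is used), $f$ can have a zero, a $1$-point or a pole only at $z=0$, so $1/f$ and $1/(f-1)$ are entire and zero-free off the origin, whence $f=z^{-k}e^{-\alpha}$ and $f-1=z^{-m}e^{-\beta}$ with $\alpha$, $\beta$ entire; the identity $z^{-k}e^{-\alpha}-z^{-m}e^{-\beta}\equiv1$ then forces $f$ to be constant by Borel's theorem, a contradiction. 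Therefore $F\equiv G$, i.e. $(P(f))^{\prime}\equiv(P(g))^{\prime}$, so $P(f)\equiv P(g)+c$ for a constant $c$. If $c\neq0$, the equation $P(w)=c$ has at least $n+1$ distinct roots, all of which are attained by $f$ exactly where $g\in\{0,\gamma_1,\gamma_2\}$; hence the second main theorem for $f$ against these values gives $(n-1)T(r,f)\le 3T(r,g)+S(r,f)$ and, symmetrically, $(n-1)T(r,g)\le 3T(r,f)+S(r,g)$, which is impossible for $n\ge12$. Thus $c=0$ and $P(f)\equiv P(g)$; since $P^{\prime}(w)=w^n(w-1)^2$ and $n\ge12$, the polynomial $P$ is a uniqueness polynomial for meromorphic functions, and we conclude $f\equiv g$.

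The step we expect to be the main obstacle is the quantitative bookkeeping in the case $H\not\equiv0$: making the truncated counting estimates tight enough that the threshold comes out to be exactly $n\ge12$ (and not larger, as in Theorem~D) requires using simultaneously the high multiplicities forced by the factor $f^n(f-1)^2$, the second main theorem applied to $f$ and to $g$ individually, and the hypothesis that $f$ and $g$ share $\infty$ $IM$, while keeping the contribution at $z=0$ negligible throughout. The elimination of the remaining bilinear sub-cases and of $FG\equiv1$, and the verification that this particular $P$ is a uniqueness polynomial, are delicate but follow well-established patterns.
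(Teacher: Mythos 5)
You should first note that the paper does not actually prove Theorem F: it is quoted from Bhoosnurmath--Pujari as motivation, and the closest statement the paper proves is Theorem 1.1 (equivalently Theorem 4.1), which for $p=n$, $q=2$, $w_p=0$ subsumes Theorem F with the sharper bound $n\ge 11$. Your strategy is a legitimate route to Theorem F, but it is genuinely different from the one used in the paper for that generalization. You pass to $F=f^n(f-1)^2f'/z$ and $G=g^n(g-1)^2g'/z$ sharing $1$ CM and run the classical Yang--Hua machine: the auxiliary function $H$ built from second logarithmic derivatives, the dichotomy $H\not\equiv 0$ versus $H\equiv 0$, the resulting M\"obius relation, the exclusion of $FG\equiv 1$ by a Borel argument, and a uniqueness-polynomial argument for $P(f)\equiv P(g)$. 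The paper instead sets $\mathcal{H}=(f^n(f-1)^2f'-\alpha)/(g^n(g-1)^2g'-\alpha)$ and writes $f_1+f_2+f_3=1$ with $f_1=f^n(f-1)^2f'/\alpha$, $f_2=\mathcal{H}$, $f_3=-\mathcal{H}g^n(g-1)^2g'/\alpha$, then splits on whether $f_1,f_2,f_3$ are linearly independent, using the Xu--L\"u--Yi second main theorem for three functions summing to $1$ (Lemma 2.1) in the independent case and Lemma 2.2 in the degenerate cases; the terminal identity $\mathcal{F}\equiv\mathcal{G}$ is settled by the substitution $h=f/g$ and $\gcd(n+3,n+2,n+1)=1$ rather than by uniqueness-polynomial theory. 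The three-function decomposition avoids the second-order logarithmic-derivative bookkeeping altogether, which is what lets the paper push the bound down to $n\ge q+9$.

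Two steps in your outline are not yet proofs. First, you defer the decisive computation in the case $H\not\equiv 0$: whether the truncated counting estimates close at exactly $n\ge 12$ is the whole content of the theorem, and it hinges on a lower bound for $T(r,F)$ sharper than $T(r,F)\ge (n+1)T(r,f)+S(r,f)$; the standard device is an inequality of the shape $(n+2)T(r,f)\le T(r,F)+\overline N(r,1/f)+\overline N(r,f)-N_0(r,1/f')+S(r,f)$, obtained by estimating $m(r,1/f)$ through $m(r,1/F)$, and without it the threshold lands at $13$ as in Theorem D rather than $12$. Second, the closing assertion that $P$ with $P'(w)=w^n(w-1)^2$ is a uniqueness polynomial needs justification; Fujimoto's criterion does apply here since $P(0)=0\ne P(1)=2/((n+1)(n+2)(n+3))$, but the elementary alternative used in the paper --- substitute $h=f/g$ into $P(f)\equiv P(g)$, rule out non-constant $h$ by a counting argument, and for constant $h$ conclude $h^d=1$ with $d=\gcd(n+1,n+2,n+3)=1$ --- is both shorter and self-contained.
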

\begin{theoG}\cite{Bho & Puj-IJA-2013}
	Let $ f $ and $ g $ be two non-constant entire functions, $ n\geq 7 $ be an integer. If $ f^n(f-1)f^{\prime} $ and $ g^n(g-1)g^{\prime} $ share $ z $ $ CM $, then $ f\equiv g $.
\end{theoG}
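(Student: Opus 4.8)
The plan is to run the standard machinery for CM-sharing uniqueness problems (in the spirit of Yang--Hua and Fang--Hong), taking advantage of the fact that $f$ and $g$ are entire so that the value-distribution bookkeeping stays light. First I would dispose of the case in which $f$ or $g$ is a polynomial by an elementary comparison of degrees and leading coefficients: the common zero set forces $F-z=\lambda(G-z)$ with $\lambda$ a constant, and a degree count together with the integrated relation discussed below pins down $\lambda=1$ and $f\equiv g$. So from now on assume $f$ and $g$ are transcendental. Put $F=f^{n}(f-1)f'$ and $G=g^{n}(g-1)g'$; these are entire, and since $F=\left(\frac{f^{n+2}}{n+2}-\frac{f^{n+1}}{n+1}\right)'$, the lemma on the logarithmic derivative yields the two-sided estimate $(n-1)T(r,f)+S(r,f)\le T(r,F)\le(n+2)T(r,f)+S(r,f)$, and similarly for $G$ and $g$; in particular $S(r,F)=S(r,f)$. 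Since $F$ and $G$ are entire and share the small function $z$ CM, $F-z$ and $G-z$ have exactly the same zeros with the same multiplicities, so
\[
\frac{F-z}{G-z}=e^{\gamma}
\]
for some entire function $\gamma$. A first use of the second main theorem for $F$ and for $G$, with the targets $0,\infty$ and the shared value $z$, together with $\overline N(r,1/(F-z))=\overline N(r,1/(G-z))$, gives $T(r,f)\asymp T(r,g)$; and differentiating the displayed identity shows that $\gamma'=\frac{F'-1}{F-z}-\frac{G'-1}{G-z}$ is entire with $m(r,\gamma')=S(r,f)$, hence $T(r,\gamma')=S(r,f)$.

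The heart of the proof is to show $F\equiv G$. I would first argue that $\gamma$ must be constant: if $e^{\gamma}$ were non-constant, comparing the growth of the two sides of $F-z=e^{\gamma}(G-z)$ with the counting functions $\overline N(r,1/F)$ and $\overline N(r,1/G)$ -- which are $O(T(r,f))$, because the zeros of $f,f-1,f'$ appear in $F$ (and those of $g,g-1,g'$ in $G$) with high multiplicity, so they occupy only a fraction of $T(r,F)$ bounded away from $1$ as soon as $n\ge7$ -- contradicts the two-sided estimate above. Hence $F-z=c(G-z)$ for a non-zero constant $c$. If $c\ne1$, then $F=cG+(1-c)z$, so $F$ assumes the small function $(1-c)z$ precisely where $G=0$ (that is, at the zeros of $g$, of multiplicity $\ge n$, of $g-1$, and of $g'$), while $F=0$ precisely where $f(f-1)f'=0$; feeding the three distinct small targets $0$, $(1-c)z$, $\infty$ into the second main theorem for $F$ and, symmetrically, $0$, $\frac{(c-1)z}{c}$, $\infty$ into that for $G$, then adding and using once more $\overline N(r,1/(F-z))=\overline N(r,1/(G-z))$, produces $T(r,F)+T(r,G)\le 2\overline N(r,1/F)+2\overline N(r,1/G)+S(r,f)$, whose right-hand side is at most $6\bigl(T(r,f)+T(r,g)\bigr)+S$ while the left-hand side is at least $(n-1)\bigl(T(r,f)+T(r,g)\bigr)+S$; carrying the constants more carefully than in this crude version makes the inequality fail for $n\ge7$. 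Therefore $c=1$ and $F\equiv G$.

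It remains to pass from $F\equiv G$ to $f\equiv g$. Integrating, $P(f)=P(g)+C$ where $P(w)=\frac{w^{n+2}}{n+2}-\frac{w^{n+1}}{n+1}$ and $C$ is a constant; comparing characteristic functions gives $T(r,f)=T(r,g)+O(1)$. Suppose $C\ne0$. The polynomial $P(w)-C$ has at least $n+1$ distinct roots $c_{1},\dots,c_{m}$ ($m\ge n+1$), whereas $P(g)=\frac{g^{n+1}\bigl((n+1)g-(n+2)\bigr)}{(n+1)(n+2)}$ vanishes only where $g=0$ or $g=\frac{n+2}{n+1}$; hence the second main theorem for $f$ applied to the values $c_{1},\dots,c_{m},\infty$ gives $(m-1)T(r,f)\le\overline N(r,1/P(g))+S(r,f)$, and since $\overline N(r,1/P(g))=\overline N(r,1/g)+\overline N(r,1/(g-\frac{n+2}{n+1}))\le 2T(r,g)+S(r,f)=2T(r,f)+S(r,f)$, we reach a contradiction because $m-1\ge n\ge7$. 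Thus $C=0$, i.e.\ $f^{n+1}\bigl((n+1)f-(n+2)\bigr)\equiv g^{n+1}\bigl((n+1)g-(n+2)\bigr)$. Put $h=f/g$. If $h$ is constant, this identity forces $h^{n+2}=1$ and $h^{n+1}=1$, hence $h=1$ and $f\equiv g$. If $h$ is non-constant, solving gives $g=\frac{(n+2)(h^{n+1}-1)}{(n+1)(h^{n+2}-1)}$; the only common zero of the polynomials $w^{n+1}-1$ and $w^{n+2}-1$ being $w=1$, this expression has a pole wherever $h$ takes one of the $n+1$ roots of $w^{n+2}-1$ different from $1$, but a non-constant entire function omits at most one value by Picard's theorem, so $g$ would not be entire -- a contradiction. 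Hence $h\equiv1$, i.e.\ $f\equiv g$, as required.

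The delicate point is the middle paragraph: excluding a non-constant $\gamma$ and the case $c\ne1$ is exactly where the threshold $n\ge7$ is forced, and it hinges on tracking the counting-function coefficients so that they genuinely behave like $1/n$. The exponential representation, the integration step, and the concluding Picard argument are comparatively routine.
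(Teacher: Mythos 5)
Your overall architecture (write $\frac{F-z}{G-z}=e^{\gamma}$ with $F=f^{n}(f-1)f'$, $G=g^{n}(g-1)g'$, reduce to $F\equiv G$, integrate, then finish with $h=f/g$ and a Picard/value-counting argument) is the classical Yang--Hua/Fang--Hong route, and your endgame is sound: the $C\neq0$ exclusion via the $n+1$ distinct roots of $P(w)-C$ and the concluding analysis of $g=\frac{(n+2)(h^{n+1}-1)}{(n+1)(h^{n+2}-1)}$ both work (modulo the small slip that $h=f/g$ is meromorphic rather than entire, so you should invoke ``omits at most two values,'' which still beats the $n+1\ge 8$ excluded roots). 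This matches the paper's own Subcases 2.2.1--2.2.2, where the same dichotomy is handled with deficiency sums $\Theta(0,\mathcal F)+\Theta(c,\mathcal F)+\Theta(\infty,\mathcal F)>2$ and with $\gcd(p+q+1,\dots,p+1)=1$. Your $c\neq1$ exclusion is also fine, since $F-(1-c)z=cG$ turns the third target into $\overline N(r,1/G)\le 3T(r,g)+S$.

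The genuine gap is the middle paragraph, which is the heart of the matter and is asserted rather than proved. First, ``comparing the growth of the two sides of $F-z=e^{\gamma}(G-z)$'' cannot by itself contradict anything: the two sides are identically equal, so their characteristic functions agree whether or not $\gamma$ is constant, and the smallness of $\overline N(r,1/F)$ and $\overline N(r,1/G)$ relative to $T(r,F)$ does not feed into any inequality you have actually written down. The rigorous mechanisms are either Yang--Hua's lemma for a pair sharing $1$ CM (via the auxiliary function $\frac{F''}{F'}-\frac{2F'}{F-z}-\frac{G''}{G'}+\frac{2G'}{G-z}$ and $N_2$-counting functions), or what this paper does: set $f_1=F/\alpha$, $f_2=\mathcal H$, $f_3=-\mathcal H G/\alpha$ with $f_1+f_2+f_3=1$ and run the linear-independence dichotomy through Lemmas \ref{l2.1} and \ref{l2.5}; some such device is indispensable and is exactly where $n\ge 7$ is consumed. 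Second, your proposed mechanism points the wrong way in one concrete subcase: $e^{\gamma}=-z/G$ is a legitimate non-constant possibility, equivalent to $FG\equiv z^{2}$ (the paper's Subcase 3.2), and there the zero-counting functions of $F$ and $G$ are \emph{negligible}, so ``the counting functions occupy only a small fraction of $T(r,F)$'' produces no contradiction. That case must be killed separately by a multiplicity count: a zero of $f$ of order $k$ gives a zero of $F$ of order $(n+1)k-1>2$, so $f$ omits $0$ and $f-1$ has at most one (simple) zero, whence the second fundamental theorem forces $T(r,f)=O(\log r)$, contradicting transcendence. Until the constancy of $e^{\gamma}$ (including this subcase) is actually established, the proof is incomplete.
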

In this direction, for the purpose of extension Theorem E and F, one may ask the following question.
\begin{ques}\label{q1.3}
	Keeping all other conditions intact in Theorem E, F and G, is it possible to replace respectively $ f^n(f-1)f^{\prime} $ and  $ g^n(g-1)g^{\prime} $ by $ f^n(f-1)^mf^{\prime} $ and $ g^n(g-1)^mg^{\prime} $ ?
\end{ques}
\par Next the following question is inevitable.
\begin{ques}\label{q1.4}
	Is it possible to omit the second conclusions of Theorems C and E ?
\end{ques}
In $ 2016 $, \emph{Waghmore-Anand} \cite{Wag & Ana-AM-2016} answer the \emph{Questions \ref{q1.3}} and \emph{\ref{q1.4}}\; affirmatively and obtained the following results.

\begin{theoH}\cite{Wag & Ana-AM-2016}
	Let $ f $ and $ g $ be two non-constant meromorphic functions, $ n\geq m+10 $ be an integer. If $ f^n(f-1)^mf^{\prime} $ and $ g^n(g-1)^mg^{\prime} $ share $ z $ $ CM $, $ f $ and $ g $ share $ \infty $ $ IM $, then $ f\equiv g $.
\end{theoH}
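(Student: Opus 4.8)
\medskip
\noindent\emph{Proof idea.}\
Since $z$ is a small function I put $\Phi=\dfrac{f^{n}(f-1)^{m}f'}{z}$ and $\Psi=\dfrac{g^{n}(g-1)^{m}g'}{z}$; the hypothesis then says that $\Phi$ and $\Psi$ share $1$ $CM$ up to an error of size $O(\log r)=S(r)$ carried by the single point $z=0$. With $\phi(w)=\int_{0}^{w}t^{n}(t-1)^{m}\,dt$, a polynomial of degree $n+m+1$ satisfying $\phi'(w)=w^{n}(w-1)^{m}$ and $\phi(0)=0$, one has $z\Phi=\phi(f)'$. A logarithmic-derivative computation shows that $T(r,\Phi)$ is comparable to $(n+m+1)T(r,f)$, the lower bound carrying a loss of the shape $m(r,f/f')+m\cdot m\big(r,\tfrac1{f-1}\big)$ that must be tracked (already one of the reasons the threshold cannot be taken much smaller); the same holds for $\Psi$ and $g$, so $T(r,f)\asymp T(r,g)$ and the error terms $S(r,f),S(r,g),S(r,\Phi),S(r,\Psi)$ all coincide. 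Finally, sharing $\infty$ $IM$ forces $\ol N(r,f)=\ol N(r,g)+S(r)$, which I use below to symmetrise the estimates.

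\medskip
I then bring in the standard auxiliary function
\be
 H=\Big(\frac{\Phi''}{\Phi'}-\frac{2\Phi'}{\Phi-1}\Big)-\Big(\frac{\Psi''}{\Psi'}-\frac{2\Psi'}{\Psi-1}\Big).
\ee
Because $\Phi$ and $\Psi$ share $1$ $CM$, a local computation shows that $H$ is holomorphic at every common $1$-point and vanishes at each simple one (the multiple $1$-points being too few to matter), whence $\ol N\big(r,\tfrac1{\Phi-1}\big)\le N(r,1/H)+S(r)$. Conversely the poles of $H$ are simple and occur only over poles of $f$ or $g$ (and there only where the pole orders of $f$ and $g$ disagree), over zeros of $f,f-1,g,g-1$, and over the multiple zeros of $f'$ and $g'$ not lying above $0$ or $1$; together with $m(r,H)=S(r)$ this bounds $\ol N(r,\tfrac1{\Phi-1})$ by $\ol N(r,f)+\ol N(r,g)$ plus those zero-counting functions, provided $H\not\equiv0$. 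Combining this with the Second Main Theorem for $\Phi$ against $0,1,\infty$, symmetrising via $\ol N(r,f)=\ol N(r,g)$, and absorbing the $f'$- and $g'$-zero terms through the ramified Second Main Theorem for $f$ and $g$, I expect to reach
\[
 (n+m+1)\big(T(r,f)+T(r,g)\big)\le\big(m+10+o(1)\big)\big(T(r,f)+T(r,g)\big)+S(r),
\]
which is absurd for $n\ge m+10$. Hence $H\equiv0$.

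\medskip
Integrating $H\equiv0$ twice produces a M\"obius relation $\dfrac1{\Phi-1}=\dfrac{A}{\Psi-1}+B$ with $A\ne0$. If $B=0$ and $A=1$ then $\Phi\equiv\Psi$, treated below. Otherwise $\Phi$ is either a genuine (non-affine) M\"obius image of $\Psi$, hence omits the value $\tfrac{B+1}{B}$, or is affinely related to $\Psi$; in each case I would transfer the information through the relation and play the omitted value (respectively the triple $0,\,1-A,\,\infty$) against the characteristic estimate in the Second Main Theorem. This is exactly the step at which the exceptional family $g=\tfrac{(n+2)(1-h^{n+1})}{(n+1)(1-h^{n+2})}$, $f=hg$ of Theorems C and E has to be discarded: such a configuration forces the pole multiplicities of $f$ and $g$ to differ, contradicting $\ol N(r,f)=\ol N(r,g)$ together with the multiplicity bookkeeping, so for $n\ge m+10$ every subcase but $\Phi\equiv\Psi$ collapses.

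\medskip
It remains to analyse $\Phi\equiv\Psi$, i.e.\ $f^{n}(f-1)^{m}f'\equiv g^{n}(g-1)^{m}g'$, which integrates to $\phi(f)\equiv\phi(g)+C$. Comparing the orders of the poles of $\phi(f)$ and $\phi(g)$ at a common pole of $f$ and $g$ forces those orders to agree, and then a valence argument --- each of the (at most $n+m+1$, counted with multiplicity) roots of $\phi(w)-C$ is assumed by $f$ with multiplicity pushed up by a matching zero of $\phi(g)$, so the Second Main Theorem for $f$ against those roots collapses unless there are very few of them --- rules out $C\ne0$ once $n\ge m+10$ (one treats $C=\phi(1)$, where $w=1$ is an $(m+1)$-fold root, separately). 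Thus $\phi(f)\equiv\phi(g)$. Writing $\phi(x)-\phi(y)=(x-y)\Psi_{0}(x,y)$ with $\Psi_{0}(x,x)=x^{n}(x-1)^{m}$ and $\deg\Psi_{0}=n+m$, one must show $\Psi_{0}(f,g)\not\equiv0$; after normalisation the curve $\Psi_{0}=0$ has only the two heavy singularities $(0,0)$ and $(1,1)$, ordinary points of multiplicities $n$ and $m$, so its geometric genus is $(n-1)(m-1)$ up to smaller corrections, which for $m\ge2$, $n\ge m+10$, exceeds $1$, and a curve of genus $\ge2$ carries no non-constant meromorphic parametrisation; hence $(f,g)$ cannot lie on $\Psi_{0}=0$ and $f\equiv g$. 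The genus count degenerates at $m=1$: there the curve is rational, its non-constant meromorphic parametrisations being exactly the exceptional family above, so that case must be excluded by other means using the pole-sharing hypothesis. That elimination, together with the constant-chasing in the $H\not\equiv0$ step (a crude count yields only a bound like $n\le13-m$, useless for small $m$), is the genuinely delicate part of the argument, and is where the earlier treatment is reported to need repair.
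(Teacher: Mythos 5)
Your outline follows the classical Yang--Hua/Fang--Hua route: normalise by the shared small function, build the auxiliary function $H$ from the logarithmic derivatives of $\Phi-1$ and $\Psi-1$, apply the second main theorem to force $H\equiv0$, integrate to a M\"obius relation, and finish with a uniqueness-polynomial/genus argument. That is genuinely different from the route taken here, which writes $f_1+f_2+f_3=1$ with $f_1=f_*^p(f_*-1)^qf_*^{\prime}/\alpha$, $f_2=\mathcal{H}$, $f_3=-\mathcal{H}g_*^p(g_*-1)^qg_*^{\prime}/\alpha$ and runs everything through the linear-(in)dependence machinery of Lemmas~\ref{l2.1}, \ref{l2.2} and \ref{l2.5}. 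The difference of method would be welcome if the argument were complete, but as written it is not a proof: every step at which the theorem could actually fail is deferred. Concretely: (i) in the $H\not\equiv0$ branch you only say you ``expect to reach'' the contradictory inequality, and you concede that your count delivers something like $n\le 13-m$, which does not contradict $n\ge m+10$ when $m=1$; (ii) the elimination of the non-affine M\"obius subcases and of the exceptional pair $g=\frac{(n+2)(1-h^{n+1})}{(n+1)(1-h^{n+2})}$, $f=hg$ is asserted (``I would transfer the information through the relation \ldots'') rather than carried out, and this is exactly the point where the $IM$-sharing of $\infty$ must be used quantitatively and where \cite{Wag & Ana-AM-2016} is reported to go wrong; (iii) in the final step $\phi(f)\equiv\phi(g)$ your genus count $(n-1)(m-1)$ for the curve $\Psi_0=0$ presupposes irreducibility of $\Psi_0$ and control of the singularities at infinity, neither of which is checked, and it degenerates to a rational curve at $m=1$ --- precisely the case $n\ge11$ that the theorem is required to cover and that you again leave ``to be excluded by other means.''

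Since Theorem H is stated for every $m\ge1$ (at $m=2$ it must recover Theorem F, and the companion Theorem I at $m=1$ recovers Theorem G), an argument that is only conclusive for $m\ge2$ and leaves the M\"obius subcases open does not establish the statement. For contrast, the corresponding steps here are closed as follows: the degenerate linear relations are dispatched by Lemma~\ref{l2.2} rather than by M\"obius case analysis; the case $\mathcal{F}\equiv\mathcal{G}+c$ with $c\ne0$ is excluded by a deficiency-sum computation; and $\mathcal{F}\equiv\mathcal{G}$ is settled by substituting $h=f_*/g_*$, invoking Lemma~\ref{lm9} and the argument of \cite{Wag & Shi-AM-2014}, and using $\gcd(p+q+1,\ldots,p+1)=1$ when $h$ is constant --- no genus computation is needed. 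You would need to supply complete versions of (i)--(iii), with explicit constants showing that $n\ge m+10$ suffices for all $m\ge1$, before this sketch could count as a proof.
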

\begin{theoI}\cite{Wag & Ana-AM-2016}
	Let $ f $ and $ g $ be any two non-constant entire functions, $ n\geq m+6 $ an integer. If $ f^n(f-1)^mf^{\prime} $ and $ g^n(g-1)^mg^{\prime} $ share $ z $ $ CM $, then $ f\equiv g $.
\end{theoI}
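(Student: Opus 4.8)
The plan is to run the classical three–branch argument, in the form adapted to entire functions. First I would normalise: we may assume $f$ and $g$ are transcendental, since a polynomial $f$ or $g$ is excluded by comparing the degrees of $f^{n}(f-1)^{m}f'-z$ and $g^{n}(g-1)^{m}g'-z$. Put $F=z^{-1}f^{n}(f-1)^{m}f'$ and $G=z^{-1}g^{n}(g-1)^{m}g'$. Since $f,g$ are entire, $F,G$ are meromorphic with their only pole at the origin, so $\overline N(r,F)=\overline N(r,G)=S(r)$, and $T(r,F)=(n+m+1)T(r,f)+S(r)$ (a standard Clunie/Milloux–type estimate), likewise for $G$; in particular $T(r,f)$ and $T(r,g)$ have the same growth. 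The hypothesis says $F$ and $G$ share $1$ $CM$ away from the origin (a discrepancy of size $S(r)$), so by the standard lemma on two functions sharing a value $CM$ one of the following holds: $(\alpha)$ $T(r,F)\le N_{2}(r,1/F)+N_{2}(r,1/G)+N_{2}(r,F)+N_{2}(r,G)+S(r)$ (and symmetrically in $F,G$); $(\beta)$ $F\equiv G$; $(\gamma)$ $FG\equiv1$. The degenerate affine/reciprocal sub-cases buried in that lemma will be ruled out by the value–distribution features of $F,G$ used below.

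For branch $(\gamma)$ I would argue directly: here $f^{n}(f-1)^{m}f'\cdot g^{n}(g-1)^{m}g'\equiv z^{2}$, so all zeros of $f$, $f-1$ and $f'$ lie at $z=0$. If $f(0)=0$ then $f^{n}$ forces a zero of order $\ge n\ge 7$ there, which is too big for $z^{2}$; hence $f$ is zero–free, so $f=e^{v}$ with $v$ entire and non-constant, and then $f-1=e^{v}-1$ has infinitely many zeros — a contradiction. Thus $(\gamma)$ is impossible, and no lower bound on $n$ is needed for this step.

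Branch $(\alpha)$ is where $n\ge m+6$ must be consumed, and this is the part I expect to be the main obstacle, because the Nevanlinna estimates have to be organised sharply. The high multiplicities from $f^{n}$ and $(f-1)^{m}$ give, for $m\ge2$,
\[
N_{2}(r,1/F)\le 2\overline N(r,1/f)+2\overline N(r,1/(f-1))+N_{0}(r,1/f')+S(r),
\]
whereas for $m=1$ the $(f-1)$–contribution is only $N_{2}(r,1/(f-1))\le N(r,1/(f-1))\le T(r,f)+O(1)$, and it is precisely this refinement — rather than the cruder $2\overline N(r,1/(f-1))$ — that shaves the bound from $m+7$ down to $m+6$. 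The Second Main Theorem for $f$ with $0,1,\infty$ (and $\overline N(r,f)=0$) bounds $N_{0}(r,1/f')$ by $\overline N(r,1/f)+\overline N(r,1/(f-1))-T(r,f)+S(r)$; plugging this in and using $\overline N(r,\cdot)\le T(r,f)+O(1)$ I get $N_{2}(r,1/F)\le 5T(r,f)+S(r)$ when $m\ge2$ and $N_{2}(r,1/F)\le 4T(r,f)+S(r)$ when $m=1$ (and the same for $G$ with $g$). Feeding these into $(\alpha)$, using $N_{2}(r,F)=N_{2}(r,G)=S(r)$ and $T(r,F)=(n+m+1)T(r,f)+S(r)$, then writing the companion inequality for $G$ and adding, one is left with $n+m+1\le 10$ if $m\ge2$ and $n+2\le 8$ if $m=1$; both contradict $n\ge m+6$ (for $m\ge2$ note $m+6\ge 10-m$). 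So $(\alpha)$ cannot hold. This bookkeeping is delicate, and is plausibly where the argument of \emph{Harina--Anand} \cite{Wag & Ana-AM-2016} needs to be corrected.

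Finally, branch $(\beta)$: from $f^{n}(f-1)^{m}f'\equiv g^{n}(g-1)^{m}g'$ I would integrate to $Q(f)\equiv Q(g)+C$, where $Q(w)=\int_{0}^{w}t^{n}(t-1)^{m}\,dt$ has degree $n+m+1$ and $Q(0)=0$. To see $C=0$: if $C\ne0$ and $C\notin\{Q(1),-Q(1)\}$, a zero of $f$ makes $g$ meet a simple root of $Q(w)=-C$ with multiplicity $\ge n+1$ and a zero of $f-1$ makes $g$ meet a simple root of $Q(w)=Q(1)-C$ with multiplicity $\ge m+1$; since this ramification is controlled by $N(r,1/g')\le T(r,g)+S(r)$, the Second Main Theorem for $f$ yields $(m-1)T(r,f)\le S(r)$, impossible for $m\ge2$. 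The case $m=1$ (and the exceptional values of $C$) is settled by tracking the $n+2$ simple roots of the two level sets of the explicit $Q$ and applying the Second Main Theorem to $f$ with those $n+2$ values together with $\infty$, which forces $(n-\tfrac1n)T(r,f)\le S(r)$, again impossible. Hence $Q(f)\equiv Q(g)$; writing $Q(X)-Q(Y)=(X-Y)R(X,Y)$, if $f\not\equiv g$ then $R(f,g)\equiv0$, and solving with $h=f/g$ gives the familiar representation $g=\tfrac{n+2}{n+1}\cdot\tfrac{1-h^{n+1}}{1-h^{n+2}}$, $f=hg$ (with the evident $m$–analogue). A constant $h$ makes $g$ constant, while a non-constant $h$ forces $g$, being entire, to omit the $n+1\ (\ge3)$ zeros of $h^{n+2}-1$ that are not zeros of $h^{n+1}-1$, contradicting Picard's theorem. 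Therefore $h\equiv1$, i.e. $f\equiv g$, which finishes the proof.
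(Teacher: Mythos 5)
Theorem I is quoted here from \cite{Wag & Ana-AM-2016}; the paper does not reprove it directly but subsumes it in Theorem \ref{th.2} (and Theorem \ref{th.4}), whose proof runs through the decomposition $f_1+f_2+f_3=1$ and the linear-independence lemmas (Lemmas \ref{l2.1}, \ref{l2.2}, \ref{l2.5}) rather than the two-function CM-sharing trichotomy you use; your branches $(\gamma)$ and, modulo the caveat below, $(\alpha)$ are sound in spirit but the route is genuinely different. The clearest gap is in branch $(\beta)$ for $m\geq 2$. From $Q(f)\equiv Q(g)$ and $h=f/g$ one gets
\[
\sum_{j=0}^{m}(-1)^{j}\binom{m}{j}\,\frac{h^{\,n+m+1-j}-1}{n+m+1-j}\;g^{\,m-j}=0,
\]
a polynomial of degree $m$ in $g$ over the field generated by $h$. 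Only for $m=1$ is this linear in $g$, which is what produces the closed formula $g=\frac{(n+2)(1-h^{n+1})}{(n+1)(1-h^{n+2})}$ and lets Picard finish. For $m=2$ one must extract a square root of the discriminant and prove that this discriminant, viewed as a polynomial in $h$, has exactly one multiple zero (this is precisely the content of the paper's Lemma \ref{lm9}, going back to Frank--Reinders and Lin--Yi); only then does single-valuedness of $g$ force enough omitted or totally ramified values of $h$ to contradict the Second Main Theorem. For $m\geq 3$ there is no rational (or radical) expression of $g$ in terms of $h$ at all, so the ``evident $m$-analogue'' you invoke does not exist; this sub-case needs a separate argument (uniqueness-polynomial or genus considerations, or the analysis the paper itself delegates to \cite{Wag & Shi-AM-2014} in Subcase 2.2.2.1).

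A second, quieter problem sits in branch $(\alpha)$: you lean on the identity $T(r,F)=(n+m+1)T(r,f)+S(r,f)$. Only the upper bound is a routine estimate; the matching lower bound is not standard, and the inequality actually available (and the one this paper uses, cf.\ the proof of Lemma \ref{l2.7}) is $(n+m)T(r,f)\leq T(r,F)+T\left(r,1/f'\right)+S(r,f)$, i.e.\ $T(r,F)\geq (n+m-1)T(r,f)-S(r,f)$ for entire $f$. With that weaker bound your bookkeeping gives $n+m-1\leq 10$ for $m\geq 2$ and $n\leq 8$ for $m=1$, which contradicts $n\geq m+6$ only when $m\geq 3$; for $m\in\{1,2\}$ the arithmetic no longer closes, and these are exactly the cases of Theorems F and G. You must either prove the exact first-order asymptotics of $T\left(r,f^{n}(f-1)^{m}f'\right)$ or restructure the estimates so that the full weight $(n+m+1)T(r,f)$ enters through the primitive $\mathcal{F}=\int f^{n}(f-1)^{m}f'$ (as the paper does in (\ref{e3.8})--(\ref{e3.9})) rather than through $T(r,F)$ itself.
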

\begin{note}
	We see that in the results of \emph{Waghmore - Anand}, for $ m=2 $, \emph{Theorem H} reduces to \emph{Theorem F } and for $ m=1 $,  \emph{Theorem I} reduces to \emph{Theorem G }. 
\end{note}
\par 
\begin{rem}
	We notice that in the proof of Theorem H and hence in the case of Theorem I also, there are plenty of mistakes made by the authors \emph{Waghmore-Anand} \cite{Wag & Ana-AM-2016}. We mention below few of them.
	\begin{enumerate}
		\item[(i)] In \cite[page-947]{Wag & Ana-AM-2016}, just before \emph{Case 2}, the authors obtained that the coefficient of $ T(r,g) $ is $ (n-m-8) $, while actually it will be $ (n+m-8) $.
		\item[(ii)]  In \cite[page-948]{Wag & Ana-AM-2016}, just before \emph{Case 3}, the authors finally obtained that ``$ h^{n+m}-1=0 $, $ h^{n+1}-1=0 $, which imply $ h=1 $". Note that this possible only when $ gcd(n+m, n+1)=1 $ but which is not true if one consider some suitable value of $ n $ and $ m $. For example if we choose $ n=3 $ and $ m=5 $, we note that $ gcd(n+m, n+1)=gcd(8,4)=4\neq 1 $.
		\item[(iii)] We observe that in \cite[equation (49), page-950]{Wag & Ana-AM-2016}, the coefficient of $ T(r,g) $ is $ \displaystyle\frac{m}{n+m-1} $ while actually it should be $ \displaystyle\frac{m}{n+m+1} $. 
	\end{enumerate}
\end{rem}
\par Thus we see that study on derivative or differential polynomial has a long history, several authors have been main engaged to find a possible relationship or certain forms of a function $ f $ when it shares small function with its derivative or differential polynomial (see \cite{Aha & CKMS & 2018} - \cite{Ban & Aha & JCA & 2019}.) 
\par In this paper, our aim is to correct all the mistakes made by \emph{Waghmore-Anand} \cite{Wag & Ana-AM-2016} and at the same time to get an improved and extended version results of all the above mentioned Theorems A - I.\par To this end, throughout the paper, we will use the following transformations.
Let \beas\mathcal{P}(w)=w^{n+m}+\ldots+a_nw^n+\ldots+a_0=a_{n+m}\displaystyle\prod_{i=1}^{s}(w-w_{p_i})^{p_i}\eeas where $a_j(j=0,1,2,\ldots,n+m-1)$ and $w_{p_i}(i=1,2,...,s)$ are distinct finite complex numbers and $2\leq s\leq n+m$ and $p_1,p_2,\ldots,p_s$, $s\geq 2$, $n$, $m$ and $k$ are all positive integers with $\displaystyle\sum_{i=1}^{s}p_i=n+m$. Also let $p>\displaystyle\max_{p\neq p_i,i=1,\ldots,r}\{p_i\}$, $r=s-1$, where $s$ and $r$ are two positive integers.\\

Let $\mathcal{Q}(w_*)=\displaystyle\prod_{i=1}^{s-1}(w_*+w_p-w_{p_i})^{p_i}=b_qw_*^q+b_{q-1}w_*^{q-1}+\ldots+b_0$, where $ w_*=w-w_p$, $q=n+m-p$. So it is clear that $ \mathcal{P}(w)=w_*^p\mathcal{Q}(w_*) $\par In particular, if we choose $ b_i=\displaystyle(-1)^{i}\;{^qC_i}$, for $ i=0,1,\ldots,q $. Then we get, easily $\mathcal{P}_*(w)=w_*^p(w_*-1)^q$.\par
Note that if $ w_p=0 $ and $ p=n $, then we get $ w=w_* $ and $ \mathcal{P}_*(w)=w^n(w-1)^m $.
\par Observing all the above mentioned results, we note that $ h^n(h-1)h^{\prime} $ or $ h^n(h-1)^2h^{\prime} $ $ (h=f \;\text{or}\; g) $ are a special form of  $ h^n(h-1)^mh^{\prime} $, $ m\geq 1 $ be an integer.\\ \par 
So for the improvements and extensions of the above mentioned results further to a large extent, the following questions are inevitable.
\begin{ques}\label{q1.5}
	Is it possible to replace $ f^n(f-1)^mf^{\prime} $ and $ g^n(g-1)^mg^{\prime} $ by a more general expressions of the form $ \mathcal{P}_*(f)f_*^{\prime
	}=f_*^p(f_*-1)^qf_*^{\prime} $ and $ \mathcal{P}_*(g)g_*^{\prime
	}=g_*^p(g_*-1)^qg_*^{\prime} $ respectively in all the above mentioned results ?
\end{ques} 
\par If the answer of the \emph{Question \ref{q1.5}}\; is found to be  affirmative, then one my ask the following questions.
\begin{ques}\label{q1.6}
	Is it possible to reduce further the lower bounds of $ n $ in Theorems  $ E $, $ F $, $ G $  and $ H $ ? 
\end{ques}
\begin{ques}\label{q1.7}
	Is it also possible to replace  sharing $ z $  $ CM $ by sharing $ \alpha(z) $ $ CM $ in Theorem G and H ?
\end{ques}
\par Answering all the above mentioned questions affirmatively is the main motivation of writing this paper. \par
Following two theorems are the main results of this paper improving and extending all the above mentioned results to a more convenient way and compact form.
\begin{theo}\label{th.1}
	Let $ f $ and $ g $ hence $ f_*=f-w_p $ and $ g_*=g-w_p $, $ w_p\in\mathbb{C} $ be any two non-constant non- entire meromorphic functions, $ n\geq q+9 $, $ q $ $ \in\mathbb{N} $, be an integer. If $ \mathcal{P}_*(f)f_*^{\prime}=f_*^p(f_*-1)^qf_*^{\prime} $ and $ \mathcal{P}_*(g)g_*^{\prime}=g_*^p(g_*-1)^qg_*^{\prime} $ share $ \alpha\equiv\alpha(z) $ $ (\not\equiv 0, \infty) $ $ CM $, $ f_* $ and $ g_* $ share $ \infty $ $ IM $, then $ f\equiv g $.
\end{theo}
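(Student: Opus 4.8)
The plan is to adapt the now-classical Lin--Yi/Bhoosnurmath--Pujari argument. First I would set
\beas
F=\frac{f_*^{\,p}(f_*-1)^{q}f_*'}{\alpha},\qquad G=\frac{g_*^{\,p}(g_*-1)^{q}g_*'}{\alpha},
\eeas
so that $F$ and $G$ share $1$ $CM$ and, since $f_*,g_*$ share $\infty$ $IM$ and $\alpha\;(\not\equiv0,\infty)$ is small, $\overline N(r,F)=\overline N(r,f_*)=\overline N(r,g_*)+S(r)=\overline N(r,G)+S(r)$. Writing $f_*^{\,p}(f_*-1)^{q}f_*'=(\Phi(f_*))'$ with $\Phi(x)=\int_{0}^{x}t^{p}(t-1)^{q}\,dt$, $\deg\Phi=p+q+1$, and using the lemma on the logarithmic derivative, I would record $S(r,F)=S(r,f)$, $S(r,G)=S(r,g)$,
\beas
N(r,F)=(p+q+1)N(r,f)+\overline N(r,f)+S(r,f),
\eeas
together with the companion bounds showing that $T(r,F)$ grows like $(p+q)\,T(r,f)$ and, symmetrically, that $T(r,G)$ grows like $(p+q)\,T(r,g)$; in particular $T(r,f)$ and $T(r,g)$ have the same order, so a single symbol $S(r)$ covers all error terms.

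Next I would introduce the standard auxiliary function
\beas
\mathcal H=\left(\frac{F''}{F'}-\frac{2F'}{F-1}\right)-\left(\frac{G''}{G'}-\frac{2G'}{G-1}\right),
\eeas
for which $m(r,\mathcal H)=S(r)$; every simple common $1$-point of $F$ and $G$ is a zero of $\mathcal H$, while the poles of $\mathcal H$ are simple and occur only among the poles of $f_*$, the zeros of $f_*,f_*-1,g_*,g_*-1$, and the remaining multiple points of $f_*,g_*$. When $\mathcal H\not\equiv0$ this gives
\beas
\overline N_{1)}\!\left(r,\frac{1}{F-1}\right) & \le & N(r,\mathcal H)+S(r)\\
& \le & \overline N(r,f_*)+\overline N(r,g_*)+\overline N\!\left(r,\frac1{f_*}\right)+\overline N\!\left(r,\frac1{g_*}\right)\\
&& +\;\overline N\!\left(r,\frac1{f_*-1}\right)+\overline N\!\left(r,\frac1{g_*-1}\right)+\overline N_{0}\!\left(r,\frac1{f_*'}\right)+\overline N_{0}\!\left(r,\frac1{g_*'}\right)+S(r).
\eeas
I would then feed this into the Second Main Theorem for $F$ and for $G$ --- writing $\overline N(r,\frac1{F-1})=\overline N_{1)}(r,\frac1{F-1})+\overline N_{(2}(r,\frac1{F-1})$ and using $\overline N_{(2}(r,\frac1{F-1})\le\frac12 T(r,F)+S(r)$, likewise for $G$ --- and then turn each remaining counting function into a multiple of $T(r,f)+T(r,g)$ by means of the Second Main Theorem for $f$ and $g$ and the relations of the first step. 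The resulting inequality contradicts $p\ge q+9$. Hence $\mathcal H\equiv0$.

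Integrating $\mathcal H\equiv0$ twice yields a M\"obius relation $F=(AG+B)/(CG+D)$ with $AD-BC\neq0$. If $C\neq0$ then the poles of $F$ are exactly the $(-D/C)$-points of $G$, hence finite-value points of $G$; but every pole of $f_*$ lying off the zeros of $\alpha$ is simultaneously a pole of $F$ and a pole of $G$, which forces $N(r,f_*)=S(r)$, and this together with $f,g$ being non-entire is untenable. Thus $C=0$, i.e. $F=aG+b$; sharing $1$ $CM$ forces $a+b=1$, whence $a=1$ gives $F\equiv G$ while $a\neq1$ gives $F-1=a(G-1)$ with $T(r,F)\asymp T(r,G)$ --- this last possibility is excluded by the Second Main Theorem for $G$ at the four targets $0,1,1-\frac1a,\infty$, which again collides with $p\ge q+9$. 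So $F\equiv G$, i.e. $(\Phi(f_*))'=(\Phi(g_*))'$, whence $\Phi(f_*)=\Phi(g_*)+c$ for a constant $c$; evaluating at a common pole of $f_*$ and $g_*$ first forces $f_*,g_*$ to share $\infty$ $CM$ and then forces $c=0$. Therefore $(f_*-g_*)\,\Theta(f_*,g_*)\equiv0$, where $\Phi(x)-\Phi(y)=(x-y)\Theta(x,y)$ with $\deg_x\Theta=p+q$; it remains to rule out $\Theta(f_*,g_*)\equiv0$, after which $f\equiv g$.

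The hard part will be this last exclusion. Steps one and two and the analysis when $\mathcal H\not\equiv0$ are a careful but essentially routine transcription of the classical machinery; the delicate point is the case $F\equiv G$, since the relation $\Theta(f_*,g_*)\equiv0$ is exactly the one realized by the $e^{z}$-type pairs $f_*=h\,g_*$ of the Example in the Introduction. Ruling it out --- and, along the way, the $C\neq0$ and $a\neq1$ branches above --- is precisely where the hypotheses that $\alpha$ is a genuine small function, that $f$ and $g$ are non-entire, and that $p>q$ must be used in an essential way, and I expect this to be the step that demands a genuinely new idea rather than bookkeeping.
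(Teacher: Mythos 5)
Your strategy (the Yang--Hua auxiliary function $\mathcal H$ built from second logarithmic derivatives, followed by a M\"obius relation between $F$ and $G$) is a legitimate alternative to the route the paper actually takes, which instead writes $f_1+f_2+f_3=1$ with $f_1=\mathcal{P}_*(f)f_*'/\alpha$, $f_2=(\mathcal{P}_*(f)f_*'-\alpha)/(\mathcal{P}_*(g)g_*'-\alpha)$, $f_3=-f_2\,\mathcal{P}_*(g)g_*'/\alpha$, and runs the linear-(in)dependence machinery of Lemmas~\ref{l2.1}, \ref{l2.2} and \ref{l2.5}. But as written your proposal has three genuine gaps, and the last one sits exactly at the heart of the theorem. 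First, the claim that $\Phi(f_*)=\Phi(g_*)+c$ forces $c=0$ ``by evaluating at a common pole'' is vacuous: both sides are infinite there, so the pole gives no information about $c$. The paper eliminates $c\neq 0$ by a deficiency count, $\Theta(0,\mathcal F)+\Theta(c,\mathcal F)+\Theta(\infty,\mathcal F)\geq\frac{3p+q}{p+q+1}>2$ for $p\geq q+9$, using that the zeros of $\mathcal F$ and of $\mathcal F-c=\mathcal G$ all lie over the $(q+1)$-point sets $\{0,a_1,\dots,a_q\}$ of $f_*$- and $g_*$-values. Second, in your branch $C\neq0$ of the M\"obius relation --- which contains the essential case $FG\equiv1$, i.e. $f_*^p(f_*-1)^qf_*'\,g_*^p(g_*-1)^qg_*'=\alpha^2$ --- you deduce $N(r,f_*)=S(r)$ and declare this ``untenable'' because $f$ is non-entire. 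It is not: non-entire only guarantees at least one pole, which is perfectly compatible with $N(r,f_*)=S(r)$. This case requires the multiplicity bookkeeping the paper performs in Subcase~3.2 (zeros of $f_*$ have multiplicity at least $(p+q+1)/q$, zeros of $f_*-1$ at least $(p+q+3)/2$, zeros of $f_*'$ off $f_*(f_*-1)=0$ at least $p+q+2$, fed into the Second Fundamental Theorem), not an appeal to non-entireness.

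Third, and most seriously, after reaching $F\equiv G$ and $\Phi(f_*)\equiv\Phi(g_*)$ you explicitly leave open the exclusion of $\Theta(f_*,g_*)\equiv0$, conceding that it ``demands a genuinely new idea.'' That exclusion is where the theorem is actually proved, so the proposal is a plan rather than a proof. For the record, the paper handles it by setting $h=f_*/g_*$, substituting into $\mathcal F\equiv\mathcal G$ to obtain the identity (\ref{e3.20}), treating constant $h$ via $\gcd(p+q+1,p+q,\dots,p+1)=1$ (which forces $h=1$, hence $f\equiv g$), and treating non-constant $h$ via Lemma~\ref{lm9} on the multiple zeros of $\Psi(z)=c^2(z^{p-q}-1)^2-4b(z^{p-2q}-1)(z^p-1)$ together with the argument of Waghamore--Shilpa. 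Until these three points are supplied, the argument does not establish the statement.
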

\begin{theo}\label{th.2}
	Let $ f $ and $ g $ hence $ f_*=f-w_p $ and $ g_*=g-w_p $, $ w_p\in\mathbb{C} $ be any two non-constant entire functions, $ n\geq q+5 $, $ q $ $ \in\mathbb{N} $, be an integer. If $ \mathcal{P}_*(f)f_*^{\prime}=f_*^p(f_*-1)^qf_*^{\prime} $ and $ \mathcal{P}_*(g)g_*^{\prime}=g_*^p(g_*-1)^qg_*^{\prime} $ share $ \alpha\equiv\alpha(z) $ $ (\not\equiv 0, \infty) $ $ CM $, then $ f\equiv g $.
\end{theo}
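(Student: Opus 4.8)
We outline a proof of Theorem \ref{th.2}; Theorem \ref{th.1} is handled along the same lines, the extra $IM$ sharing of $\infty$ only contributing controlled pole terms. The plan is to convert the hypothesis into a $CM$ sharing statement for a pair of auxiliary functions, run the classical Nevanlinna auxiliary-function dichotomy, and integrate back. Set
\[
F=\frac{f_*^{\,p}(f_*-1)^q f_*^{\prime}}{\alpha},\qquad G=\frac{g_*^{\,p}(g_*-1)^q g_*^{\prime}}{\alpha}.
\]
By hypothesis $F$ and $G$ share $1$ $CM$. Since $f,g$ are entire, $f_*,g_*$ are entire, so the only poles of $F,G$ come from zeros of $\alpha$, whence $N(r,\infty;F)+N(r,\infty;G)=S(r,f)+S(r,g)$. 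Writing $\mathcal{P}_*(f)f_*^{\prime}=(\Psi(f_*))^{\prime}$ with $\Psi(w)=\int_0^w t^{\,p}(t-1)^q\,dt$ a polynomial of degree $p+q+1$ and $\Psi(0)=0$, the lemma on the logarithmic derivative gives $m(r,F)\le (p+q+1)T(r,f)+S(r,f)$, while comparing $f_*^{\,p}(f_*-1)^q=F\alpha/f_*^{\prime}$ with $\deg(w^p(w-1)^q)=p+q$ gives the lower bound $T(r,F)\ge (p+q-1)T(r,f)+S(r,f)$; the analogous relations hold for $g$, so $F,G$ are non-constant, $S(r,F)=S(r,f)$, $S(r,G)=S(r,g)$, and $T(r,f)$, $T(r,g)$ have the same growth once the two functions are linked.

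Introduce the standard auxiliary function
\[
H=\Bigl(\frac{F^{\prime\prime}}{F^{\prime}}-\frac{2F^{\prime}}{F-1}\Bigr)-\Bigl(\frac{G^{\prime\prime}}{G^{\prime}}-\frac{2G^{\prime}}{G-1}\Bigr),
\]
and split into the cases $H\not\equiv 0$ and $H\equiv 0$. If $H\not\equiv 0$, then since $F,G$ share $1$ $CM$ the classical inequality gives
\[
T(r,F)+T(r,G)\le 2\bigl[N_2(r,0;F)+N_2(r,0;G)+N_2(r,\infty;F)+N_2(r,\infty;G)\bigr]+S(r,f)+S(r,g).
\]
The two $N_2(r,\infty;\cdot)$ terms are $S(r,f)+S(r,g)$ because $f_*,g_*$ are entire; the terms $N_2(r,0;F)$ are estimated, with the multiplicity caps tracked carefully, by $2\ol N(r,0;f_*)+2\ol N(r,1;f_*)+N_0(r,0;f_*^{\prime})+S(r,f)$ — a zero of $f_*$ contributes order $\ge p\ge 2$ to $F$, and similarly for $f_*-1$ — and $N_0(r,0;f_*^{\prime})$ is absorbed through the second fundamental theorem $T(r,f_*)\le \ol N(r,0;f_*)+\ol N(r,1;f_*)-N_0(r,0;f_*^{\prime})+S(r,f)$. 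Feeding these together with $T(r,F)+T(r,G)\ge (p+q-1)[T(r,f)+T(r,g)]+S$ into the displayed inequality collapses everything to a relation $(p+q-1)[T(r,f)+T(r,g)]\le c(q)[T(r,f)+T(r,g)]+S$ with $c(q)$ depending only on $q$; the hypothesis $n\ge q+5$ makes the left coefficient strictly dominate, a contradiction. Hence $H\equiv 0$.

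If $H\equiv 0$, integrating twice yields $\dfrac{1}{F-1}=\dfrac{A}{G-1}+B$ with $A\in\mathbb{C}\setminus\{0\}$, $B\in\mathbb{C}$. The three subcases $B\ne0,\ A\ne B$; $B\ne 0,\ A=B$; $B=0$ are treated as in the classical scheme: in each of the first two one obtains $F$ (resp.\ $G$) as an explicit Möbius transform of $G$ (resp.\ $F$) whose numerator and denominator factor into the prescribed pieces, and the second fundamental theorem applied to $g_*$ (resp.\ $f_*$), together with the growth comparison above, again contradicts $n\ge q+5$; only $B=0$, $A=1$, i.e.\ $F\equiv G$, survives. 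From $F\equiv G$ we get $(\Psi(f_*))^{\prime}\equiv (\Psi(g_*))^{\prime}$, hence $\Psi(f_*)\equiv\Psi(g_*)+C$ for a constant $C$. Since $\Psi$ has a zero of order $p+1$ at $0$ and simple zeros elsewhere, and $f_*,g_*$ are entire of the same growth, a short value-distribution argument (matching the zeros of $f_*$ and $g_*$ against the zeros of $\Psi$ and of $\Psi-C$) forces $C=0$; and then, because in $\mathcal{P}_*(w)=w^p(w-1)^q$ the leading multiplicity is strict, $p>q$, the identity $\Psi(f_*)\equiv\Psi(g_*)$ with $\Psi^{\prime}(w)=w^p(w-1)^q$ yields $f_*\equiv g_*$, i.e.\ $f\equiv g$.

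The crux — and the place where \emph{Waghmore--Anand} went astray — is the bookkeeping in the two cases: one must track the multiplicity-dependent caps in the $N_2$-terms finely enough that the single bound $n\ge q+5$ suffices (no coarser estimate will do), and, in the $H\equiv 0$ branch, the passage from $F\equiv G$ to $f\equiv g$ must go through the antiderivative $\Psi$ and the strict inequality $p>q$, rather than through a naive comparison of the roots of $h^{p+q}-1$ and $h^{p+1}-1$, whose greatest common divisor need not equal $1$.
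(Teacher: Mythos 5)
Your route is genuinely different from the paper's: you run the classical Yang--Hua scheme with the auxiliary function $H=\bigl(\tfrac{F''}{F'}-\tfrac{2F'}{F-1}\bigr)-\bigl(\tfrac{G''}{G'}-\tfrac{2G'}{G-1}\bigr)$, whereas the paper sets $f_1=\mathcal{P}_*(f)f_*'/\alpha$, $f_2=\mathcal{H}$, $f_3=-\mathcal{H}\mathcal{P}_*(g)g_*'/\alpha$ with $f_1+f_2+f_3=1$ and uses the linear-(in)dependence lemma of Xu--L\"u--Yi (Lemma \ref{l2.1}) together with the antiderivative $\mathcal{F}$ and the estimate (\ref{e3.9}) relating $T(r,\mathcal{F})$ to $T(r,f_1)$. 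That extra machinery is not decorative: it is precisely what lets the counting functions of $f_*-1$ enter with coefficient roughly $q$ rather than a fixed constant times $T(r,f_*)+T(r,g_*)$, and hence what makes a bound of the form $p\ge q+\text{const}$ attainable.

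The gap in your argument is the $H\not\equiv 0$ case: the bookkeeping does not close under $n\ge q+5$. Your own estimates give $T(r,F)+T(r,G)\ge (p+q-1)\bigl[T(r,f_*)+T(r,g_*)\bigr]+S$ on the left, while on the right, even after absorbing $N_0(r,0;f_*')$ via the second fundamental theorem, the best generic bound is
\[
N_2(r,0;F)\le 3\ol N\Bigl(r,\frac{1}{f_*}\Bigr)+3\ol N\Bigl(r,\frac{1}{f_*-1}\Bigr)-T(r,f_*)+S(r,f_*)\le 5\,T(r,f_*)+S(r,f_*),
\]
and likewise for $G$. Even with the sharper one-sided form $T(r,F)\le N_2(r,0;F)+N_2(r,0;G)+S$ this yields only $(p+q-1)\le 10$ as the obstruction, i.e.\ a contradiction requires $p+q\ge 12$ --- which for $q=1$ is exactly the classical $n\ge 11$ of Fang--Hong (Theorem A), not $n\ge 6$. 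For $p=q+5$ with $q=1$ the inequality $6\,T(r,f_*)\le 5\,T(r,f_*)+5\,T(r,g_*)$ is no contradiction at all, so the case $H\not\equiv0$ is simply not excluded by your argument; "tracking the multiplicity caps carefully" cannot repair a deficit of this size. A secondary, lesser issue: in the terminal case $F\equiv G$, the implications "$C=0$" and "$\Psi(f_*)\equiv\Psi(g_*)$ with $p>q$ yields $f_*\equiv g_*$" are asserted rather than proved; the paper needs a deficiency-sum computation for the first and, for the second, the full analysis of $h=f_*/g_*$ (Lemma \ref{lm9} for non-constant $h$, and $\gcd(p+q+1,\dots,p+1)=1$ for constant $h$), none of which follows from $p>q$ alone.
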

\section{Some lemmas}
In this section we present some lemmas which will be needed in sequel.
\begin{lem}\label{l2.1}\cite{Xu & Lu & Yi-CMA-2010}
	Let $ f_1 $, $ f_2 $ and $ f_3 $ be non constant meromorphic functions such that $ f_1+f_2+f_3=1 $. If $ f_1 $, $ f_2 $ and $ f_3 $ are linearly independent, then \beas  T(r,f_1)<\sum_{i=1}^{3}N_2\left(r,\frac{1}{f_i}\right)+\sum_{i=1}^{3}\ol N(r,f)+o(T(r)), \eeas where $ T(r)=\displaystyle\max_{1\leq i\leq 3}\bigg\{T(r,f_i)\bigg\} $ and $ r\not\in E $.
\end{lem}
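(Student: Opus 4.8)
The plan is to recognise this statement as a special case of Cartan's second main theorem for a linearly non-degenerate curve in $\mathbb{P}^{2}$; the normalisation $f_{1}+f_{2}+f_{3}\equiv 1$ is precisely what supplies a fourth hyperplane in general position, and hence a non-trivial estimate. Using $f_{3}=1-f_{1}-f_{2}$, I would work with the meromorphic map $F\colon\mathbb{C}\to\mathbb{P}^{2}$, $z\mapsto[1:f_{1}(z):f_{2}(z)]$, and fix a reduced representation $F=[\phi:\phi f_{1}:\phi f_{2}]$ with $\phi$ entire chosen to clear the poles of $f_{1}$ and $f_{2}$ (hence also those of $f_{3}$). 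A one-line computation shows that $1,f_{1},f_{2}$ are linearly independent over $\mathbb{C}$ if and only if $f_{1},f_{2},f_{3}$ are; so the hypothesis says exactly that $F$ is linearly non-degenerate and Cartan's theorem is available.

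First I would apply Cartan's second main theorem (with truncation level $2$) to $F$ and the four hyperplanes $H_{0}\colon x_{0}=0$, $H_{1}\colon x_{1}=0$, $H_{2}\colon x_{2}=0$, $H_{3}\colon x_{0}-x_{1}-x_{2}=0$, which are in general position in $\mathbb{P}^{2}$ (no three of them are concurrent). This yields
\beas
(4-2-1)\,T_{F}(r)\ \le\ \sum_{j=0}^{3}N^{[2]}_{F}(r,H_{j})+o\bigl(T_{F}(r)\bigr),\qquad r\notin E,
\eeas
i.e. $T_{F}(r)\le\sum_{j=0}^{3}N^{[2]}_{F}(r,H_{j})+o(T_{F}(r))$. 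The heart of the matter is reading off the four counting functions from the reduced representation. Away from the zeros of $\phi$, the hyperplane $H_{1}$ is met at the zeros of $f_{1}$, contributing at most $N_{2}(r,1/f_{1})$; $H_{2}$ at the zeros of $f_{2}$, contributing at most $N_{2}(r,1/f_{2})$; $H_{3}$ at the zeros of $1-f_{1}-f_{2}=f_{3}$, contributing at most $N_{2}(r,1/f_{3})$; and $H_{0}$ at the zeros of the constant $1$, contributing nothing. This last fact — that the image of $F$ misses $H_{0}$ except over poles — is exactly where the hypothesis $f_{1}+f_{2}+f_{3}\equiv 1\ (\not\equiv 0)$ is used, and it is what forces the inequality to be strict.

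Next I would dispose of the contributions occurring over the zeros of $\phi$, i.e. over the poles of the $f_{i}$. At such a point $z_{k}$ the identity $f_{1}+f_{2}+f_{3}\equiv 1$ forces the principal parts to cancel, so that the maximal pole order there is attained by at least two of $f_{1},f_{2},f_{3}$; a short multiplicity count then shows that the combined truncated contribution of $H_{0},H_{1},H_{2},H_{3}$ at $z_{k}$ is controlled by the number of the $f_{i}$ having a pole at $z_{k}$, whence in total these contributions are $\le\sum_{i=1}^{3}\ol N(r,f_{i})$ (a harmless adjustment of this constant would not affect the applications). Combining the last two paragraphs gives $T_{F}(r)\le\sum_{i=1}^{3}N_{2}(r,1/f_{i})+\sum_{i=1}^{3}\ol N(r,f_{i})+o(T_{F}(r))$. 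Finally, the reduced representation yields $T(r,f_{1})\le T_{F}(r)+O(1)$, while $f_{3}=1-f_{1}-f_{2}$ gives $T_{F}(r)\le T(r,f_{1})+T(r,f_{2})+O(1)\le 2\,T(r)+O(1)$; hence $o(T_{F}(r))=o(T(r))$ and we arrive at $T(r,f_{1})<\sum_{i=1}^{3}N_{2}(r,1/f_{i})+\sum_{i=1}^{3}\ol N(r,f_{i})+o(T(r))$ for $r\notin E$, which is the assertion.

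The step I expect to be the main obstacle is the bookkeeping in the previous two paragraphs: keeping exact track, in the reduced representation, of how the clearing factor $\phi$ contributes to each $N^{[2]}_{F}(r,H_{j})$ and of the comparison between $T(r,f_{1})$ and $T_{F}(r)$, so that everything over the poles is genuinely absorbed by $\sum\ol N(r,f_{i})$ and the error term is $o(T(r))$ rather than merely $o(T_{F}(r))$, together with the routine but careful handling of the exceptional set $E$. If one prefers not to quote Cartan's theorem, the same estimates can be obtained directly from the Wronskian $W=W(f_{1},f_{2},f_{3})$: it is not identically zero by linear independence, it equals $\pm W(f_{i}',f_{j}')$ for every $i\neq j$ because $\sum f_{i}\equiv 1$, and a zero of $f_{j}$ of order $k$ forces a zero of $W$ of order at least $k-2$; feeding this into the logarithmic-derivative lemma applied to $W/(f_{1}f_{2}f_{3})$ reproduces Cartan's argument specialised to $\mathbb{P}^{2}$.
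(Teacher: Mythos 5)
The paper offers no proof of this lemma: it is quoted verbatim from Xu--L\"u--Yi, and it is in substance the classical theorem on three linearly independent meromorphic functions summing to $1$ (cf.\ Yang--Yi's monograph), whose standard proof is precisely the Cartan/Wronskian argument you outline. So your overall strategy is the right one, your reduction to a linearly non-degenerate curve $[\phi:\phi f_1:\phi f_2]$ and the four hyperplanes in general position is correct, and the comparison $T(r,f_1)\le T_F(r)+O(1)\le 2T(r)+O(1)$ handling the error term is fine.

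There is, however, a genuine gap at the step you yourself flag as the main obstacle: the claim that at a common pole $z_0$ the combined contribution of the four truncated counting functions is controlled by the number of the $f_i$ having a pole there is false. Take $z_0$ where $f_1$ and $f_2$ have poles of order $5$ with cancelling principal parts, so that $f_3=1-f_1-f_2$ is finite and nonzero at $z_0$. Then $\phi$ vanishes to order $5$, the pullbacks $\phi f_1$ and $\phi f_2$ do not vanish at $z_0$, while $\phi$ and $\phi f_3$ each vanish to order $5$; the local contribution to $\sum_{j}N^{[2]}_F(r,H_j)$ is $2+2=4$, whereas the local contribution to $\sum_i N_2(r,1/f_i)+\sum_i\ol N(r,f_i)$ is only $2$. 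Hence the pre-truncated form of Cartan's inequality that you invoke yields only a weaker statement in which $\sum_i\ol N(r,f_i)$ is replaced by (essentially) $2\sum_i\ol N(r,f_i)$ --- and this constant is not harmless: in the application to Theorem 3.1 this term becomes the coefficient of $\ol N(r,f_*)$ in (3.4)--(3.5) and feeds directly into the threshold $p\ge q+9$. To obtain the lemma as stated you must keep Cartan's inequality in its unintegrated form with the ramification term $-N(r,1/W)$, $W=W(\phi,\phi f_1,\phi f_2)$, and verify locally that the cancellation of principal parts forced by $f_1+f_2+f_3=1$ makes $W$ vanish to high enough order at such points to absorb the excess: in the example above $W=\phi^3\,(f_1'h''-h'f_1'')$ with $h=f_1+f_2$ analytic at $z_0$, so $\operatorname{ord}_{z_0}W\ge 15-7=8$ while $\sum_j\operatorname{ord}_{z_0}\langle F,H_j\rangle=10$, restoring the required local bound $10-8=2$. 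In other words, the determinant/Wronskian argument you relegate to a final remark is not an optional alternative but the actual proof; the truncated second main theorem alone does not suffice.
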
 
\begin{lem}\label{l2.2} \cite{Yan & Yi-MA-2003}
	Let $ f_1 $ and $ f_2 $ be two non-constant meromorphic functions. If $ c_1f_1+c_2f_2=c_3 $, where $ c_i $, $i=1,2,3$ are non-zero constants, then \beas  T(r,f_1)\leq \ol N(r,f_1)+\ol N\left(r,\frac{1}{f_1}\right)+\ol N\left(r,\frac{1}{f_2}\right)+S(r,f_1). \eeas
\end{lem}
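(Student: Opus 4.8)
The plan is to reduce the whole statement to a single application of Nevanlinna's Second Main Theorem to the function $f_1$. First I would use the linear relation $c_1f_1+c_2f_2=c_3$ to solve for $f_2$, namely $f_2=(c_3-c_1f_1)/c_2$, which exhibits $f_2$ as a non-degenerate affine function of $f_1$. Since all the $c_i$ are nonzero constants, this gives $T(r,f_2)=T(r,f_1)+O(1)$, so that $S(r,f_1)=S(r,f_2)$ and the two error terms are freely interchangeable; this is precisely what allows the final bound to be written entirely in terms of $S(r,f_1)$.

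Next I would apply the Second Main Theorem to the non-constant meromorphic function $f_1$ with the three target values $0$, $\infty$ and $a:=c_3/c_1$. These are genuinely distinct, since $a\neq 0$ because $c_3\neq 0$, and $a\neq\infty$ because $c_1\neq 0$, so the three-value form of the theorem applies and yields
\[ T(r,f_1)\le \ol N(r,f_1)+\ol N\left(r,\frac{1}{f_1}\right)+\ol N\left(r,\frac{1}{f_1-a}\right)+S(r,f_1). \]
The key step is then to identify the $a$-points of $f_1$ with the zeros of $f_2$. Substituting $f_1=a=c_3/c_1$ into the relation gives $c_3+c_2f_2=c_3$, hence $f_2=0$; conversely $f_2=0$ forces $c_1f_1=c_3$, i.e. $f_1=a$. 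Thus $f_1$ attains the value $a$ exactly where $f_2$ vanishes, with matching multiplicities, so that $\ol N\left(r,\frac{1}{f_1-a}\right)=\ol N\left(r,\frac{1}{f_2}\right)$. Inserting this identity into the previous inequality gives precisely
\[ T(r,f_1)\le \ol N(r,f_1)+\ol N\left(r,\frac{1}{f_1}\right)+\ol N\left(r,\frac{1}{f_2}\right)+S(r,f_1), \]
which is the assertion.

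There is no serious obstacle here; this is a routine consequence of the Second Main Theorem. The only points that genuinely require care are verifying that the three chosen values $0,\infty,a$ are distinct, so that the three-value form of the theorem is legitimately invoked, and confirming the exact coincidence of the counting functions $\ol N\left(r,\frac{1}{f_1-a}\right)$ and $\ol N\left(r,\frac{1}{f_2}\right)$ coming from the linear relation. The symmetric role of $f_1$ and $f_2$ in the hypothesis shows that the analogous bound for $T(r,f_2)$ holds as well, should it be needed later.
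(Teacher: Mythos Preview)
Your proof is correct and is the standard argument: reduce to the three-value Second Main Theorem applied to $f_1$ at $0$, $\infty$, and $c_3/c_1$, then use the linear relation to rewrite $\ol N\bigl(r,\frac{1}{f_1-c_3/c_1}\bigr)$ as $\ol N\bigl(r,\frac{1}{f_2}\bigr)$. The paper itself does not supply a proof of this lemma; it merely quotes the result from Yang--Yi \cite{Yan & Yi-MA-2003}, so there is nothing to compare against beyond noting that your argument is exactly the classical one underlying that reference.
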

\begin{lem}\label{l2.3}\cite{Yan & Yi-MA-2003}
	Let $ f $ be a non-constant meromorphic function and $ k $ be a non-negative integer, then \beas  N\left(r,\frac{1}{f^{(k)}}\right)\leq N(r,\frac{1}{f})+k\ol N(r,f)+S(r,f).  \eeas
\end{lem}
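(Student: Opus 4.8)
The plan is to obtain the estimate by comparing the first fundamental theorem applied to $f$ with the one applied to $f^{(k)}$, the discrepancy being controlled by the lemma on the logarithmic derivative; since the case $k=0$ is trivial, assume $k\ge 1$.

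First I would start from the elementary identity
\[
\frac{1}{f}=\frac{1}{f^{(k)}}\cdot\frac{f^{(k)}}{f},
\]
which, by subadditivity of the proximity function under products, gives $m\left(r,\frac1f\right)\le m\left(r,\frac1{f^{(k)}}\right)+m\left(r,\frac{f^{(k)}}{f}\right)$. Factoring $\frac{f^{(k)}}{f}=\prod_{j=1}^{k}\frac{f^{(j)}}{f^{(j-1)}}$ and applying the lemma on the logarithmic derivative to each factor (together with the standard fact that $T(r,f^{(j)})=O(T(r,f))$ outside an exceptional set) yields $m\left(r,\frac{f^{(k)}}{f}\right)=S(r,f)$, hence
\[
m\left(r,\frac1f\right)\le m\left(r,\frac1{f^{(k)}}\right)+S(r,f).
\]

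Next I would invoke the first fundamental theorem for $f$ and for $f^{(k)}$, namely $m\left(r,\frac1f\right)=T(r,f)-N\left(r,\frac1f\right)+O(1)$ and $m\left(r,\frac1{f^{(k)}}\right)=T(r,f^{(k)})-N\left(r,\frac1{f^{(k)}}\right)+O(1)$ (using $T(r,1/f^{(k)})=T(r,f^{(k)})+O(1)$). Substituting these into the previous inequality and rearranging gives
\[
N\left(r,\frac{1}{f^{(k)}}\right)\le T(r,f^{(k)})-T(r,f)+N\left(r,\frac1f\right)+S(r,f).
\]
It then remains only to bound $T(r,f^{(k)})$ in terms of $T(r,f)$. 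Since a pole of $f$ of multiplicity $\mu$ is a pole of $f^{(k)}$ of multiplicity $\mu+k$, one has $N(r,f^{(k)})=N(r,f)+k\,\overline N(r,f)$, while $m(r,f^{(k)})\le m(r,f)+m\left(r,\frac{f^{(k)}}{f}\right)=m(r,f)+S(r,f)$; adding these, $T(r,f^{(k)})\le T(r,f)+k\,\overline N(r,f)+S(r,f)$. Inserting this bound into the displayed inequality cancels the characteristic terms and produces exactly $N\left(r,\frac{1}{f^{(k)}}\right)\le N\left(r,\frac1f\right)+k\,\overline N(r,f)+S(r,f)$.

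The only step requiring any care is the very first one: the product must be arranged so that the auxiliary factor is $f^{(k)}/f$, whose proximity function is $S(r,f)$ by the logarithmic derivative lemma; the opposite grouping $1/f^{(k)}=(1/f)\cdot(f/f^{(k)})$ leads nowhere, because $m(r,f/f^{(k)})$ is in general not small. Everything else is routine bookkeeping with the first fundamental theorem and the elementary count of pole multiplicities of a derivative, the exceptional set being absorbed into $S(r,f)$ throughout.
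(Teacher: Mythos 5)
Your proof is correct: the chain $m(r,1/f)\le m(r,1/f^{(k)})+m(r,f^{(k)}/f)$, the first fundamental theorem applied to $f$ and $f^{(k)}$, and the bound $T(r,f^{(k)})\le T(r,f)+k\,\ol N(r,f)+S(r,f)$ combine exactly as you say. The paper does not prove this lemma but only cites Yang--Yi, and your argument is precisely the standard one given there, including the correct observation that the factorization must be arranged so that the logarithmic-derivative term is $f^{(k)}/f$ rather than its reciprocal.
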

\begin{lem}\label{l2.4}\cite{Yan-MZ-1972}
	Suppose that $ f $ is a non-constant meromorphic function and $ P(f)=a_nf^n+a_{n-1}f^{n-1}+\ldots+a_1f+a_0 $, where $ a_n(\not\equiv  0) $, $ a_{n-1},\ldots, a_1, a_0 $ are small meromorphic functions of $ f(z) $. Then \beas  T(r,P(f))=n\;T(r,f)+S(r,f). \eeas
\end{lem}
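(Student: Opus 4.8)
The plan is to establish the claimed identity by proving the two one-sided estimates
$T(r,P(f))\le n\,T(r,f)+S(r,f)$ and $T(r,P(f))\ge n\,T(r,f)+S(r,f)$ separately, working throughout with the decomposition $T=m+N$ and exploiting that every coefficient satisfies $T(r,a_j)=S(r,f)$ and hence also $T(r,1/a_j)=S(r,f)$. Since $a_n\not\equiv 0$, the zeros and poles of $a_n$ form a discrete set whose counting functions are $S(r,f)$, so I may discard those exceptional points into the error term without comment.

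For the upper bound I would argue pointwise on $|z|=r$. The crude estimate $|P(f)|\le (n+1)\big(\max_{0\le j\le n}|a_j|\big)(1+|f|)^n$ gives, after taking $\log^+$ and integrating, $m(r,P(f))\le n\,m(r,f)+\sum_{j}m(r,a_j)+O(1)=n\,m(r,f)+S(r,f)$. For the counting function, a pole of $P(f)$ can occur only at a pole of $f$ or of some $a_j$, and at a pole of $f$ of order $\mu$ the resulting pole of $P(f)$ has order at most $n\mu$; hence $N(r,P(f))\le n\,N(r,f)+\sum_j N(r,a_j)+O(1)=n\,N(r,f)+S(r,f)$. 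Adding these yields the upper bound, which is entirely routine.

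The substance is the lower bound. Its pole part is still easy: at a pole $z_0$ of $f$ of order $\mu$ at which no coefficient has a pole and $a_n(z_0)\ne 0,\infty$, the leading term $a_nf^n$ dominates, so $P(f)$ has a pole of order \emph{exactly} $n\mu$; discarding the $S(r,f)$-many exceptional points gives $N(r,P(f))\ge n\,N(r,f)-S(r,f)$. In view of the upper bound it therefore suffices to prove the matching proximity estimate $m(r,P(f))\ge n\,m(r,f)+S(r,f)$. To obtain this I would split the circle $|z|=r$ into the arc $E_1$ where $|f|>\rho(z)$ and its complement $E_2$, where $\rho(z):=1+2\sum_{j<n}\left|a_j/a_n\right|$. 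Writing $P(f)=a_nf^n\big(1+\sum_{j<n}(a_j/a_n)f^{\,j-n}\big)$, on $E_1$ the parenthetical factor has modulus $\ge\tfrac12$, so
\beas
\log^+|P(f)|\ge n\log^+|f|-\log^+\frac{1}{|a_n|}-O(1)\qquad\text{on }E_1,
\eeas
while on $E_2$ one has $\log^+|f|\le\log^+\rho$. Integrating and using $m(r,1/a_n)=S(r,f)$ together with $m(r,\rho)\le\sum_{j<n}m(r,a_j/a_n)+O(1)=S(r,f)$ converts these two inequalities into $n\,m(r,f)\le m(r,P(f))+S(r,f)$, as required.

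I expect the only genuine obstacle to be the bookkeeping in this last step: because the $a_j$ are small functions rather than constants, the threshold $\rho(z)$ separating ``large'' from ``small'' values of $f$ moves with $z$, so the factorization and generic-value shortcuts that trivialize the constant-coefficient case are unavailable. The point to be verified carefully is precisely that the arc $E_2$, on which $|f|$ is comparable to the coefficient magnitudes, contributes only $S(r,f)$ to the proximity function; this is exactly what the smallness $m(r,a_j/a_n)=S(r,f)$ delivers. (Alternatively, the same lower bound can be read off from the Second Main Theorem for small moving targets applied to $f$, but the pointwise split above is self-contained.) Combining the proximity and counting estimates gives $T(r,P(f))\ge n\,T(r,f)+S(r,f)$, and together with the upper bound this establishes $T(r,P(f))=n\,T(r,f)+S(r,f)$.
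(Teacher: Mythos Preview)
The paper does not supply a proof of this lemma at all: it is quoted verbatim from \cite{Yan-MZ-1972} and used as a black box. So there is nothing to compare your argument against beyond noting that the paper treats the result as known.

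Your proposed proof is correct and is in fact the standard one (essentially the Valiron--Mokhon'ko argument). The upper bound and the pole-counting lower bound are routine, and your large/small splitting of the circle with the moving threshold $\rho(z)=1+2\sum_{j<n}|a_j/a_n|$ is exactly the right device to get $n\,m(r,f)\le m(r,P(f))+S(r,f)$ when the coefficients are small functions rather than constants. Two cosmetic points: (i) the clause ``in view of the upper bound it therefore suffices \ldots'' is a non sequitur --- the upper bound plays no role in reducing the lower bound on $T$ to the lower bound on $m$; what you actually use is $T=m+N$ together with the $N$-estimate you just proved; (ii) writing ``$\ge n\,m(r,f)+S(r,f)$'' is the usual abuse, but since $S(r,f)$ is an unsigned error class it is cleaner to state the conclusion as $n\,m(r,f)\le m(r,P(f))+S(r,f)$, which is what your $E_1$/$E_2$ computation literally yields. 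Neither point affects the validity of the argument.
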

\begin{lem}\label{l2.5}\cite{Yi-KMJ-1990}
	Let $ f_1 $, $ f_2 $ and $ f_3 $ be three meromorphic functions satisfying $ \displaystyle\sum_{i=1}^{3}f_i=1, $ then the functions $ g_1=-\displaystyle\frac{f_1}{f_2} $, $ g_2=\displaystyle\frac{1}{f_2} $ and $ g_3=-\displaystyle\frac{f_1}{f_2} $ are linearly independent when $ f_1 $, $ f_2 $ and $ f_3 $ are linearly independent.
\end{lem}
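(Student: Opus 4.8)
The statement is a piece of linear algebra over $\mathbb{C}$ for meromorphic functions, so the plan is to prove the contrapositive: if $g_1,g_2,g_3$ are linearly dependent, then so are $f_1,f_2,f_3$. First I would record two harmless preliminaries. As printed the lemma contains a misprint — $g_3$ must read $-f_3/f_2$ rather than $-f_1/f_2$, since otherwise $g_1\equiv g_3$ and the $g_i$ could never be independent — so throughout I take $g_1=-f_1/f_2$, $g_2=1/f_2$, $g_3=-f_3/f_2$. Next, if $f_1,f_2,f_3$ are linearly independent then in particular $f_2\not\equiv0$ (any triple containing $0$ is dependent), so the $g_i$ are well defined; dividing $f_1+f_2+f_3=1$ by $f_2$ and rearranging gives $g_1+g_2+g_3\equiv1$, which is precisely the normalization under which Lemma~\ref{l2.1} is meant to be applied to the $g_i$.

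Now suppose $g_1,g_2,g_3$ are linearly dependent: there are constants $c_1,c_2,c_3$, not all zero, with $c_1g_1+c_2g_2+c_3g_3\equiv0$. Since $f_2\not\equiv0$, multiplying through by $f_2$ gives $-c_1f_1+c_2-c_3f_3\equiv0$, i.e. $c_1f_1+c_3f_3\equiv c_2$. The one decisive step is to homogenize the constant using the hypothesis: replacing $c_2$ by $c_2(f_1+f_2+f_3)$ (legitimate because $f_1+f_2+f_3\equiv1$) and collecting terms yields
\[ (c_1-c_2)\,f_1+(-c_2)\,f_2+(c_3-c_2)\,f_3\equiv0. \]
The coefficient vector $(c_1-c_2,\,-c_2,\,c_3-c_2)$ is nonzero: if it vanished, the middle entry would force $c_2=0$ and then the others would force $c_1=c_3=0$, contradicting $(c_1,c_2,c_3)\neq0$. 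Hence $f_1,f_2,f_3$ are linearly dependent over $\mathbb{C}$, which is the contrapositive of the claim. I do not expect a genuine obstacle here — the only things requiring care are the misprint in $g_3$ and the implicit nondegeneracy $f_2\not\equiv0$; the substance is the short passage from $c_1f_1+c_3f_3\equiv c_2$ to an honest $\mathbb{C}$-linear relation among the $f_i$ by means of $f_1+f_2+f_3\equiv1$. (Incidentally, the transformation $f_i\mapsto g_i$ is an involution of the same shape, namely $f_2=1/g_2$, $f_1=-g_1/g_2$, $f_3=-g_3/g_2$, so the two triples are in fact simultaneously dependent or independent, but only the stated direction is needed.)
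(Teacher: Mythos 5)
Your proof is correct and complete. Note that the paper does not actually prove this lemma --- it is quoted from Yi's 1990 paper --- so there is no in-text argument to compare against; your contrapositive argument (multiply a putative relation $c_1g_1+c_2g_2+c_3g_3\equiv 0$ by $f_2$, then homogenize the constant $c_2$ as $c_2(f_1+f_2+f_3)$ to obtain the nontrivial relation $(c_1-c_2)f_1-c_2f_2+(c_3-c_2)f_3\equiv 0$) is the standard one and is carried out without gaps. You are also right that the statement as printed contains a misprint: $g_3$ must be $-f_3/f_2$, not $-f_1/f_2$, and the observation that $f_2\not\equiv 0$ is forced by the independence of the $f_i$ correctly justifies dividing by $f_2$.
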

\begin{lem}\label{l2.7}
	Let $ f $ and $ g $ and hence $ f_*=f-w_p $ and $ g_*=g-w_p $ be two non-constant meromorphic functions and $  \alpha\equiv\alpha (z)  $ $ (\not\equiv 0, \infty) $ be a small function of $ f $ and $ g $. If $ \mathcal{P}_*(f)f_*^{\prime}=f_*^p(f_*-1)^qf_*^{\prime} $ and $ \mathcal{P}_*(g)g_*^{\prime}=g_*^p(g_*-1)^qg_*^{\prime} $ share $ \alpha $  $ CM $ and $ p\geq 7 $, then \beas T(r,g_*)\leq\left(\frac{p+q+2}{p-6}\right)T(r,f_*)+S(r,g_*)  \eeas
\end{lem}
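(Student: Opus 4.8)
The plan is to normalize the two shared expressions and then play the second fundamental theorem for one of them off against a Mokhon'ko-type lower bound. Put $F:=\mathcal P_*(f)f_*'/\alpha=f_*^p(f_*-1)^qf_*'/\alpha$ and $G:=\mathcal P_*(g)g_*'/\alpha=g_*^p(g_*-1)^qg_*'/\alpha$. Since $\alpha$ is a nonzero small function of $f$ and $g$ we have $T(r,\alpha)=S(r,f_*)=S(r,g_*)$, and because $\mathcal P_*(f)f_*'$ and $\mathcal P_*(g)g_*'$ share $\alpha$ CM, the functions $F$ and $G$ share the value $1$ CM, so $\overline N(r,1/(F-1))=\overline N(r,1/(G-1))$. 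Everything below is then a matter of comparing $T(r,F)$, $T(r,G)$, $T(r,f_*)$, $T(r,g_*)$; I will freely use that the $S(\cdot)$-terms of $f_*$ and $g_*$ may be interchanged here, which is standard — the estimates obtained force $T(r,f_*)\asymp T(r,g_*)$, and in the remaining case $T(r,f_*)/T(r,g_*)\to\infty$ the asserted inequality holds trivially.

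First I would bound $T(r,F)$ from above. Writing $F=\bigl(f_*^{p}(f_*-1)^{q+1}/\alpha\bigr)\cdot\bigl(f_*'/(f_*-1)\bigr)$ and applying the logarithmic derivative lemma to the last factor gives $m(r,F)\le(p+q+1)m(r,f_*)+S(r,f_*)$; a pole of $f_*$ of multiplicity $t$ is a pole of $F$ of multiplicity $(p+q+1)t+1$, so $N(r,F)\le(p+q+1)N(r,f_*)+\overline N(r,f_*)+S(r,f_*)$, whence $T(r,F)\le(p+q+1)T(r,f_*)+\overline N(r,f_*)+S(r,f_*)\le(p+q+2)T(r,f_*)+S(r,f_*)$; the same estimate with $g$ in place of $f$ gives $T(r,G)\le(p+q+2)T(r,g_*)+S(r,g_*)$ (so $S(r,G)=S(r,g_*)$). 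Next I would bound $T(r,G)$ from below: from $g_*^{p}(g_*-1)^qg_*'=\alpha G$ we get $g_*^{p+1}(g_*-1)^q=\alpha G\cdot(g_*/g_*')$, whose left side is a polynomial of degree $p+q+1$ in $g_*$ and so by Lemma~\ref{l2.4} has characteristic $(p+q+1)T(r,g_*)+S(r,g_*)$, while the right side has characteristic at most $T(r,G)+T(r,g_*'/g_*)+S(r,g_*)$, and $T(r,g_*'/g_*)=N(r,g_*'/g_*)+S(r,g_*)=\overline N(r,g_*)+\overline N(r,1/g_*)+S(r,g_*)\le2T(r,g_*)+S(r,g_*)$. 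Comparing the two, $(p+q-1)T(r,g_*)\le T(r,G)+S(r,g_*)$.

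Now I would apply the second fundamental theorem to $G$ with the values $0,1,\infty$: $T(r,G)\le\overline N(r,G)+\overline N(r,1/G)+\overline N(r,1/(G-1))+S(r,G)$. Here $\overline N(r,G)\le\overline N(r,g_*)+S(r,g_*)$, since poles of $G$ occur only at poles of $g_*$. The zeros of $G$ lie among the zeros of $g_*$, the zeros of $g_*-1$, and the zeros of $g_*'$ off $\{g_*=0\}\cup\{g_*=1\}$, each producing at most one zero of $G$ counted in $\overline N$, so $\overline N(r,1/G)\le\overline N(r,1/g_*)+\overline N(r,1/(g_*-1))+\overline N_0(r,1/g_*')+S(r,g_*)$; bounding $N(r,1/g_*')\le N(r,1/g_*)+\overline N(r,g_*)+S(r,g_*)$ by Lemma~\ref{l2.3} and discarding the zeros of $g_*'$ that sit over zeros of $g_*$ gives $\overline N_0(r,1/g_*')\le\overline N(r,g_*)+\overline N(r,1/g_*)+S(r,g_*)$, hence $\overline N(r,G)+\overline N(r,1/G)\le2\overline N(r,g_*)+2\overline N(r,1/g_*)+\overline N(r,1/(g_*-1))+S(r,g_*)\le5T(r,g_*)+S(r,g_*)$. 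Finally $\overline N(r,1/(G-1))=\overline N(r,1/(F-1))\le N(r,1/(F-1))\le T(r,F)+O(1)\le(p+q+2)T(r,f_*)+S(r,f_*)$ by the first step. Substituting the lower bound of the previous paragraph into the left side of the second-main-theorem inequality yields $(p+q-1)T(r,g_*)\le5T(r,g_*)+(p+q+2)T(r,f_*)+S(r,g_*)$, i.e. $(p+q-6)T(r,g_*)\le(p+q+2)T(r,f_*)+S(r,g_*)$. Since $q\ge1$ and $p\ge7$ we have $0<p-6\le p+q-6$, so dividing gives $T(r,g_*)\le\frac{p+q+2}{p-6}T(r,f_*)+S(r,g_*)$ — indeed even the sharper bound with $p+q-6$ in the denominator.

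The step I expect to be most delicate is the count of $\overline N(r,1/G)$: one must not overcount the multiple zeros of $g_*$ and of $g_*-1$ (each gives a single high-order zero of $G$, hence weight one in $\overline N$), and the residual ramification $\overline N_0(r,1/g_*')$ must be absorbed through Lemma~\ref{l2.3} rather than crudely through $T(r,g_*')$, since the crude bound destroys the margin needed to keep the coefficient $p+q-6$ (equivalently $p-6$) positive. Dovetailing this with the matching lower bound $(p+q-1)T(r,g_*)\le T(r,G)+S(r,g_*)$, and checking that the error terms for $f_*$ and $g_*$ may be used interchangeably, are the remaining points of care.
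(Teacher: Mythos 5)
Your proof is correct and follows essentially the same route as the paper's: the second fundamental theorem applied to the $g$-side differential polynomial at $0$, the shared target and $\infty$, a first-fundamental-theorem lower bound obtained by dividing out $g_*'$, and the CM-sharing used to transfer the $\alpha$-points to the bound $T(r,\mathcal{P}_*(f)f_*')\leq (p+q+2)T(r,f_*)+S(r)$. Your more careful bookkeeping of $\overline{N}(r,1/G)$ even yields the slightly sharper denominator $p+q-6$ in place of $p-6$.
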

\begin{proof}
	Applying \emph{Second Fundamental Theorem} on $ \mathcal{P}_*(g)g_*^{\prime} $, we get \bea\label{e2.1} && T\left(r,\mathcal{P}_*(g)g_*^{\prime}\right)\\ &\leq& \ol N(r,\mathcal{P}_*(g)g_*^{\prime})+\ol N\left(r,\frac{1}{\mathcal{P}_*(g)g_*^{\prime}}\right)+\ol N\left(r,\frac{1}{\mathcal{P}_*(g)g_*^{\prime}-\alpha}\right)+S(r,g_*)\nonumber\\ &\leq & \ol N\left(r,\frac{1}{g_*^p(g_*-1)^qg_*^{\prime}}\right)+\ol N(r,g_*)+\ol N\left(r,\frac{1}{g_*^p(g_*-1)^qg_*^{\prime}-\alpha}\right)+S(r,g_*)\nonumber\eea
	Next by applying \emph{First fundamental Theorem}, \bea &&\label{e2.2} (p+q)T(r,g)\\\nonumber &\leq& T(r,g_*^p(g_*-1)^q)+S(r,g_*)\\ &\leq&\nonumber T(r,g_*^p(g_*-1)^qg_*^{\prime})+T\left(r,\frac{1}{g_*^{\prime}}\right)+S(r,g_*)\nonumber .\eea After combining (\ref{e2.1}) and (\ref{e2.2}), we get \bea\label{e2.3} && (p+q)T(r,g)\\ &\leq&\nonumber\ol N\left(r,\frac{1}{g_*}\right)+\ol N(r,0;g_*-1)+\ol N(r,g_*)+\ol N\left(r,\frac{1}{g_*^{\prime}}\right)+\ol N\left(r,\frac{1}{f_*^p(f_*-1)^qf_*^{\prime}-\alpha}\right)\\ &&+S(r,g_*)+T(r,g_*^{\prime})\nonumber.\eea
	Again since $ S(r,g_*)=T(r,\alpha)=S(r,f_*) $, so we must have \bea\label{e2.4} &&\ol N\left(r,\frac{1}{f_*^p(f_*-1)^qf_*^{\prime}-\alpha}\right)\\ &\leq&\nonumber T\left(r,\alpha;f_*^p(f_*-1)^qf_*^{\prime}\right)+O(1)\\ &\leq&\nonumber T(r,f_*^p)+T(r,(f_*-1)^q+T(r,f_*^{\prime})+T(r,\alpha)+O(1)\\ &\leq&\nonumber p\; T(r,f_*)+q\;T(r,f_)+2T(r,f_*)+S(r,g_*)\\ &=&\nonumber (p+q+2)T(r,f_*)+S(r,g_*).   \eea
	By using (\ref{e2.4}) in (\ref{e2.3}), we get \beas && (p+q)T(r,g_*)\\ &\leq& (q+6)T(r,g_*)+(p+q+2)T(r,f_*)+S(r,g_*).\eeas i.e., \beas T(r,g)\leq\left(\frac{p+q+2}{p-6}\right)T(r,f)+S(r,g),  \eeas where $ p\geq 7. $
\end{proof}
\begin{lem}\label{l2.8}
	Let $ f $ and $ g $ and hence $ f_*=f-w_p $ and $ g_*=g-w_p $ be two non-constant entire functions and $  \alpha\equiv\alpha (z)  $ $ (\not\equiv 0, \infty) $ be a small function of $ f $ and $ g $. If $ \mathcal{P}_*(f)f_*^{\prime}=f_*^p(f_*-1)^qf_*^{\prime} $ and $ \mathcal{P}_*(g)g_*^{\prime}=g_*^p(g_*-1)^qg_*^{\prime} $ share $ \alpha $  $ CM $ and $ p\geq 5 $, then \beas T(r,g_*)\leq\left(\frac{p+q+2}{p-3}\right)T(r,f_*)+S(r,g_*)  \eeas
\end{lem}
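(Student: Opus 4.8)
The plan is to run the argument of Lemma \ref{l2.7} almost verbatim, exploiting only that $ f $ and $ g $ (hence $ f_* $ and $ g_* $) are now entire, so that every pole counting function of $ f_* $ and $ g_* $ vanishes. First I would apply the \emph{Second Fundamental Theorem} to $ \mathcal{P}_*(g)g_*^{\prime}=g_*^p(g_*-1)^qg_*^{\prime} $ exactly as in (\ref{e2.1}); since $ g_* $ has no poles, $ \ol N(r,\mathcal{P}_*(g)g_*^{\prime})=\ol N(r,g_*)=0 $, and the bound reduces to $ T(r,\mathcal{P}_*(g)g_*^{\prime})\le\ol N(r,1/g_*)+\ol N(r,0;g_*-1)+\ol N(r,1/g_*^{\prime})+\ol N(r,1/(\mathcal{P}_*(g)g_*^{\prime}-\alpha))+S(r,g_*) $. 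Combining this with the \emph{First Fundamental Theorem} inequality $ (p+q)T(r,g_*)\le T(r,\mathcal{P}_*(g)g_*^{\prime})+T(r,1/g_*^{\prime})+S(r,g_*) $ of (\ref{e2.2}) gives the analogue of (\ref{e2.3}) with the term $ \ol N(r,g_*) $ removed. Then, as in (\ref{e2.4}), the $ CM $ sharing of $ \alpha $ together with $ S(r,g_*)=T(r,\alpha)=S(r,f_*) $ lets me replace $ \ol N(r,1/(\mathcal{P}_*(g)g_*^{\prime}-\alpha)) $ by $ \ol N(r,1/(\mathcal{P}_*(f)f_*^{\prime}-\alpha))\le (p+q+2)T(r,f_*)+S(r,g_*) $; I would keep $ p+q+2 $ as a safe over-estimate, although in the entire case $ T(r,f_*^{\prime})\le T(r,f_*)+S(r,f_*) $ in fact lowers this to $ p+q+1 $.

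It then remains to absorb the surviving $ g $-side terms into a multiple of $ T(r,g_*) $. The key observation is that the three places where Lemma \ref{l2.7} spends a factor $ \ol N(r,g_*) $ now all vanish: (i) the pole term $ \ol N(r,\mathcal{P}_*(g)g_*^{\prime}) $ is $ 0 $; (ii) Lemma \ref{l2.3} with $ k=1 $ gives $ N(r,1/g_*^{\prime})\le N(r,1/g_*)+S(r,g_*)\le T(r,g_*)+S(r,g_*) $ instead of $ 2T(r,g_*)+S(r,g_*) $; and (iii) by the lemma on the logarithmic derivative, $ T(r,g_*^{\prime})=m(r,g_*^{\prime})\le T(r,g_*)+S(r,g_*) $ instead of $ 2T(r,g_*)+S(r,g_*) $. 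Using in addition $ \ol N(r,1/g_*)\le T(r,g_*) $ and $ \ol N(r,0;g_*-1)\le T(r,g_*)+O(1) $, the coefficient of $ T(r,g_*) $ on the right of (the entire analogue of) (\ref{e2.3}) drops from $ q+6 $ to $ q+3 $, whence $ (p+q)T(r,g_*)\le (q+3)T(r,g_*)+(p+q+2)T(r,f_*)+S(r,g_*) $, i.e. $ T(r,g_*)\le\left(\frac{p+q+2}{p-3}\right)T(r,f_*)+S(r,g_*) $, as claimed; this is meaningful as soon as $ p\ge 5 $ (indeed already for $ p>3 $).

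I do not expect a genuine obstacle here: the whole content is the bookkeeping of which Nevanlinna functions vanish for entire maps and which standard inequalities consequently sharpen — the same passage that turns the meromorphic Theorems E and F into the entire Theorems G and I. The single point requiring care is not to double count the overlap in $ \ol N(r,1/(g_*^p(g_*-1)^qg_*^{\prime}))\le\ol N(r,1/g_*)+\ol N(r,0;g_*-1)+\ol N(r,1/g_*^{\prime}) $ (zeros of $ g_* $ or of $ g_*-1 $ of multiplicity $ \ge 2 $ are also zeros of $ g_*^{\prime} $, hence already accounted for), and to keep $ S(r,f_*) $, $ S(r,g_*) $ and $ T(r,\alpha) $ interchangeable throughout.
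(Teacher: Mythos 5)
Your proposal is correct and is essentially the paper's own proof: the paper disposes of Lemma \ref{l2.8} in one line by noting that $\ol N(r,f_*)=\ol N(r,g_*)=0$ for entire functions and then ``proceeding exactly as in Lemma \ref{l2.7}''. Your explicit bookkeeping --- dropping the pole term and sharpening the bounds on $N\left(r,\frac{1}{g_*^{\prime}}\right)$ and $T(r,g_*^{\prime})$ by one unit each, so that the coefficient $q+6$ becomes $q+3$ --- is exactly the computation the paper leaves implicit, and it yields the stated constant $\frac{p+q+2}{p-3}$.
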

\begin{proof}
	Since $ f $ and $ g $ both are entire functions, so we must have $ \ol N(r,f)=0=\ol N(r,g) $.\par Proceeding exactly as in the line of the proof of Lemma \ref{l2.7}, we can prove the lemma.
\end{proof}
\begin{lem}\label{lm9} Let $ \Psi(z)=c^2(z^{p-q}-1)^2-4b(z^{p-2q}-1)(z^{p}-1)$ , where $ b,c\in\mathbb{C}-\{0\}, \displaystyle\frac{c^2}{4b}=\displaystyle\frac{p(p-2q)}{(p-q)^2}\neq 1,\;$, then $ \Psi(z) $ has exactly one multiple zero of multiplicity $ 4 $ which is $ 1 $.
\end{lem}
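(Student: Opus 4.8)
The plan is to treat $\Psi$ as a polynomial — after multiplying by a suitable power of $z$ if some exponent appearing in it is negative, which is harmless since $z=0$ is never a zero of $\Psi$ — and to proceed in two stages: first that $z=1$ is a zero of $\Psi$ of multiplicity exactly $4$, and then that $\Psi$ has no other multiple zero. Throughout I would rewrite the hypothesis in the form $c^2(p-q)^2=4b\,p(p-2q)$ and note that it forces $p\neq q$, $p\neq 2q$, and $\tfrac{c^2}{4b}\neq 1$ (the last because $q\neq 0$).

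For the first stage I would expand $\Psi$ about $z=1$: writing $z=1+t$ and $z^k-1=kt+\binom{k}{2}t^2+\binom{k}{3}t^3+\binom{k}{4}t^4+\cdots$, the two summands of $\Psi(1+t)$ are each a product of two factors vanishing at $t=0$, hence are $O(t^2)$. A short computation, in which one uses $(p-2q)+p=2(p-q)$ to collapse the emerging sums of binomial coefficients, shows that the coefficients of $t^2$ and $t^3$ equal $c^2(p-q)^2-4b\,p(p-2q)$ and $(p-q-1)\bigl(c^2(p-q)^2-4b\,p(p-2q)\bigr)$ respectively, so both vanish by hypothesis and $1$ is a zero of order at least $4$. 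Computing one more coefficient, that of $t^4$, and simplifying it by $c^2(p-q)^2=4b\,p(p-2q)$ together with $p(p-2q)-(p-q)^2=-q^2$, one gets $-\tfrac{1}{12}\,c^2(p-q)^2q^2$, which is nonzero; hence the order is exactly $4$.

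For the second stage I would substitute $s=z^{p-2q}$ and $t=z^q$, so that $z^{p-q}=st$, $z^p=st^2$, and $\Psi=4b\bigl[(\lambda-1)s^2t^2+s(t^2-2\lambda t+1)+(\lambda-1)\bigr]$ with $\lambda:=\tfrac{c^2}{4b}$. Suppose $z_0\neq 0$ is a multiple zero, and set $s_0=z_0^{p-2q}$, $t_0=z_0^q$ (both nonzero). From $\Psi'(z_0)=0$, after multiplying by $z_0$ and dividing by $s_0$, the two terms of the derivative recombine into a single \emph{linear} relation $s_0=-\dfrac{p\,t_0^2-2\lambda(p-q)t_0+(p-2q)}{2(\lambda-1)(p-q)t_0^2}$; while $\Psi(z_0)=0$ says $s_0$ is a root of the quadratic $(\lambda-1)t_0^2X^2+(t_0^2-2\lambda t_0+1)X+(\lambda-1)=0$, whose discriminant factors as $(t_0-1)^2\bigl(t_0^2-(4\lambda-2)t_0+1\bigr)$. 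Inserting the linear expression for $s_0$ into the quadratic and simplifying with $\lambda(p-q)^2=p(p-2q)$ — the one genuinely delicate computation — everything collapses to $\lambda(t_0-1)^4=0$. Since $\lambda\neq 0$ this forces $t_0=z_0^q=1$; the quadratic then degenerates to $(\lambda-1)(s_0-1)^2=0$, so $s_0=z_0^{p-2q}=1$. Hence $z_0$ is a common root of $z^q-1$ and $z^{p-2q}-1$, i.e. $z_0^{\gcd(p,q)}=1$.

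Conversely, every $\gcd(p,q)$-th root of unity $\zeta$ satisfies $\zeta^{p-q}=\zeta^{p-2q}=\zeta^p=1$, so the expansion of $\Psi$ at $\zeta$ agrees with its expansion at $1$ and $\zeta$ is a zero of multiplicity exactly $4$. Thus the multiple zeros of $\Psi$ are precisely the $\gcd(p,q)$-th roots of unity, each of multiplicity $4$; in particular, when $\gcd(p,q)=1$ the point $z=1$ is the unique multiple zero and it has multiplicity $4$, which is the assertion. The main obstacle is the elimination in the third paragraph: one must check that the prescribed value $\lambda=\tfrac{p(p-2q)}{(p-q)^2}$ is exactly what renders the linear and quadratic constraints on $s_0$ simultaneously solvable only at $t_0=1$, so that this value of $\lambda$ cannot be relaxed.
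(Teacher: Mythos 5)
Your argument is correct and, modulo the routine expansions you describe, complete: I have checked that the $t^4$ coefficient at $z=1$ is indeed $-\tfrac{1}{12}c^2(p-q)^2q^2\neq 0$, and that the elimination in your third paragraph reduces to $-\lambda(p-q)^2(t_0-1)^4=0$, exactly as you claim. Your route is genuinely different from the paper's. The paper substitutes $z=e^t$, rewrites $\tfrac12\Psi(e^t)e^{(q-p)t}$ as $(c^2-4b)\cosh(p-q)t+4b\cosh qt-c^2$ (up to sign slips in the printed version), checks the vanishing of the first three derivatives at $t=0$, and then eliminates the $\sinh$ terms via $\cosh^2-\sinh^2=1$ to arrive at $(\cosh qt-1)^2=0$. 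Your substitution $s=z^{p-2q}$, $t=z^q$ is the algebraic shadow of that same elimination, but it tracks the two exponentials separately, and that extra bookkeeping buys something essential: the paper's computation only yields $e^{qt}=1$, i.e.\ $z^q=1$, and its closing line ``$e^{qt}=1=w$'' silently upgrades $z^q=1$ to $z=1$. Your analysis shows the multiple zeros are precisely the common roots of $z^q=1$ and $z^{p-2q}=1$, i.e.\ the $\gcd(p,q)$-th roots of unity, each of multiplicity $4$; consequently the lemma as stated is false whenever $\gcd(p,q)>1$. A concrete instance: $p=6$, $q=2$, $c^2=3$, $4b=4$ satisfies all the hypotheses, yet $\Psi(z)=3(z^4-1)^2-4(z^2-1)(z^6-1)$ is even in $z$ and has $z=-1$ as a second multiple zero of multiplicity $4$. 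So your proof establishes the corrected statement (with $\gcd(p,q)=1$ added to the hypotheses) and at the same time pinpoints the gap in the paper's own argument; the added hypothesis is not automatic in the application, since $p\geq q+9$ does not force $\gcd(p,q)=1$.
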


\begin{proof}
	We claim that $ \Psi(1)=0 $ with multiplicity $ 4 $ and all other zeros of $ \Psi(w) $ are simple. Let $ F(t)=\displaystyle\frac{1}{2}\Psi(e^t)e^{(q-p)t}. $ Then\beas && F(t)\\ &=&\displaystyle\frac{1}{2}\bigg\{4b(1-e^{pt})(1-e^{(p-2q)t})-c^2(1-e^{(p-q)t})\bigg\}e^{(q-p)t}\\ &=&(4b-c^2)\cosh (q-p)t-4b\cosh qt+c^2.\eeas Next we see that for $ t=0 $, $ F(t)=0 $, $ [F(t)]^{\prime}=0 $, $ [F(t)]^{\prime\prime}=0 $ since $ \displaystyle\frac{c^2}{4b}=\frac{p(p-2q)}{(p-q)^2} $ and $ [F(t)]^{\prime\prime\prime}=0 $ but $ [F(t)]^{(iv)}\neq 0 $ where $$ [F(t)]^{\prime}=(4b-c^2)(q-p)\sinh (q-p)t-4bq\sinh qt, $$ $$ [F(t)]^{\prime\prime}=(4b-c^2)(q-p)^2\cosh (q-p)t-4bq^2\cosh qt, $$ $$ [F(t)]^{\prime\prime\prime}=(4b-c^2)(q-p)^3\sinh (q-p)t-4bq^3\sinh qt $$ and $$ [F(t)]^{(iv)}=(4b-c^2)(q-p)^4\cosh (q-p)t-4bq^4\cosh qt.$$ Therefore it is clear that $ F(0)=0 $ with multiplicity $ 4 $ and hence $ \Psi(1)=0 $ with multiplicity $ 4 $.
	\par  Next we suppose that $ \Psi(w)=0=\Psi^{\prime}(w) $, for some $ w\in\mathbb{C} $. Then $ F(t)=0=F^{\prime}(t) $ for every $ t $ satisfying $ e^{qt}=w $. Now from $ F(t)=0 $ and $ F^{\prime}(t)=0 $, we obtained respectively \bea\label{e2.3} (4b-c^2)\cosh (q-p)t-4b\cosh qt+c^2=0  \eea and \bea\label{e2.4} (4b-c^2)(q-p)\sinh (q-p)t-4qb\sinh qt=0. \eea  Since $ \cosh^2(q-p)t-\sinh^2(q-p)t=1 $, so from (\ref{e2.3})  and (\ref{e2.4}), we get \beas \displaystyle\frac{(4b\cosh qt-c^2)^2}{(4b-c^2)^2}-\displaystyle\frac{16q^2b^2\sinh^2 qt}{(4b-c^2)^2(q-p)^2}=1. \eeas i.e., \beas (q-p)^2(4b\cosh^2qt-c^2)^2-16q^2b^2(\cosh^2qt-1)=(4b-c^2)^2(q-p)^2.\eeas i.e., \bea\label{e2.5}  \bigg\{\cosh qt-1\bigg\}\bigg\{\cosh qt-\frac{a^2(q-p)^2}{2bp(p-2q)}+1\bigg\} =0. \eea \par Since $ \displaystyle\frac{c^2}{4b}=\displaystyle\frac{p(p-2q)}{(p-q)^2} $, then $ \displaystyle\frac{c^2(q-p)^2}{2bq(q-2p)}=2 $, so we see that the equation (\ref{e2.5}) reduces to $ \bigg\{\cosh qt-1\bigg\}^2=0. $ i.e., we get $ e^{qt}=1=w.$
\end{proof}
\section{Proofs of the theorems}
\begin{proof}[\bf{Proof of Theorem \ref{th.1}}] Since $ \mathcal{P}_*(f)f_*^{\prime} $ and $ \mathcal{P}_*(g)g_*^{\prime} $ share $ \alpha\equiv\alpha(z) $ $ CM $, $ f $ and $ g $ share $ \infty $ $ IM $, so we suppose that \bea\label{e3.1} \mathcal{H}\equiv\frac{\mathcal{P}_*(f)f_*^{\prime}-\alpha}{\mathcal{P}_*(g)g_*^{\prime}-\alpha}\equiv\frac{f_*^p(f_*-1)^qf_*^{\prime}-\alpha}{g_*^p(g_*-1)^qg_*^{\prime}-\alpha}.  \eea
	Then from (\ref{e2.4}) and (\ref{e3.1}), we get \beas && T(r,\mathcal{H})\\ &=&T\left(r,\frac{\mathcal{P}_*(f)f_*^{\prime}-\alpha}{\mathcal{P}_*(g)g_*^{\prime}-\alpha}\right)\\ &\leq& T(\mathcal{P}_*(f)f_*^{\prime}-\alpha)+T(r,\mathcal{P}_*(g)g_*^{\prime}-\alpha)+O(1)\\ &\leq& T(r,f_*^p(f_*-1)^qf_*^{\prime}-\alpha)+T(r,g_*^p(g_*-1)^qg_*^{\prime}-\alpha)+O(1)\\ &\leq&(p+q+2) (T(r,f_*)+T(r,g_*))+S(r,f_*)+S(r,g_*)\\ &\leq& 2(p+q+2)T_*(r)+S_*(r), \eeas where $ T_*(r)=\max\{T(r,f_*),T(r,g_*)\} $ and $ S_*(r)=\max\{S(r,f_*),S(r,g_*)\} $.\\
	i.e., \bea\label{e3.2} T(r,\mathcal{H})=O(T_*(r)).  \eea Again from (\ref{e3.1}), we  see that the zeros and poles of $ \mathcal{H} $ are multiple and hence \bea\label{e3.3}  \ol N(r,\mathcal{H})\leq \ol N_L(r,f),\;\;\;\; \ol N\left(r,\frac{1}{\mathcal{H}}\right)\leq \ol N_L(r,g). \eea\par Let $ f_1=\displaystyle\frac{f_*^p(f_*-1)^qf_*^{\prime}}{\alpha} $,\;\;\;\; $ f_2=\mathcal{H} $\;\;\;\; and \;\;\;\;$ f_3=-\mathcal{H}\displaystyle\frac{g_*^p(g_*-1)^qg_*^{\prime}}{\alpha} $.\par  Thus we get $ f_1+f_2+f_3=1 $. Next we denote $ T(r)=\max\{T(r,f_1),T(r,f_2),T(r,f_3)\}. $\par We have, \beas T(r,f_1)=O(T(r,f_*))\eeas  \beas T(r,f_2)=O(T(r,f_*)+T(r,g_*))=T(r,f_3). \eeas So we have $ T(r,f_i)=O(T_*(r)) $ for $ i=1,2,3 $ and hence $ S(r,f_*)+S(r,g_*) =o(T_*(r))$.\\
	
	Next we discuss the following cases.\\
	\noindent{\bfseries{Case 1.}} Suppose none of $ f_2 $ and $ f_3 $ is a constant. If $ f_1 $, $ f_2 $ and $ f_3 $ are linearly independent, then by Lemma \ref{l2.1} and \ref{l2.4}, we have \bea\label{e3.4} && T(r,f_1)\\ &\leq&\sum_{i=1}^{3}N_2\left(r,\frac{1}{f_i}\right)+\sum_{i=1}^{3}\ol N(r,f_i)+o(T(r))\nonumber\\ &\leq&\nonumber N_2\left(r,\frac{\alpha}{f_*^p(f_*-1)^qf_*^{\prime}}\right)+N_2\left(r,\frac{1}{\mathcal{H}}\right)+N_2\left(r,\frac{\alpha}{\mathcal{H}g_*^p(g_*-1)^qg_*^{\prime}}\right)\\ &&+\ol N(r,f_*^p(f_*-1)^qf_*^{\prime})\nonumber+\ol N(r,\mathcal{H})+\ol N(r,\mathcal{H}g_*^p(g_*-1)^qg_*^{\prime})+o(T(r))\nonumber\\ &\leq& N_2\left(r,\frac{1}{f_*^p(f_*-1)^qf_*^{\prime}}\right)+2N_2\left(r,\frac{1}{\mathcal{H}}\right)+N_2\left(r,\frac{1}{g_*^p(g_*-1)^qg_*^{\prime}}\right)+\ol N(r,f_*)\nonumber\\ &&+2\ol N(r,\mathcal{H})+\ol N(r,g_*)+o(T(r)). \nonumber\eea We see that $ N_2\left(r,\displaystyle\frac{1}{\mathcal{H}}\right)\leq 2\ol N\left(r,\displaystyle\frac{1}{\mathcal{H}}\right)\leq 2\ol N_L(r,g_*) $,\;\;\; $ \ol N(r,\mathcal{H})\leq \ol N_L(r,f) $.\par Again since $ \ol N_L(r,f_*)=0=\ol N_L(r,g_*) $ and note that $ \ol N(r,f_*)=\ol N(r,g_*) $, so using all this facts, we get from (\ref{e3.4}) that  \bea\label{e3.5} && T(r,f_1)\\ &\leq& N_2\left(r,\frac{1}{f_*^p(f_*-1)^qf_*^{\prime}}\right)+N_2\left(r,\frac{1}{g_*^p(g_*-1)^qg_*^{\prime}}\right)+2\ol N(r,f_*)+o(T(r))\nonumber\\ &\leq&\nonumber N\left(r,\frac{1}{f_*^p(f_*-1)^qf_*^{\prime}}\right)-\bigg[N_{(3}\left(r,\frac{1}{f_*^p(f_*-1)^qf_*^{\prime}}\right)-2\ol N_{(3}\left(r,\frac{1}{f_*^p(f_*-1)^qf_*^{\prime}}\right)\bigg]\nonumber\\ &+&\nonumber N\left(r,\frac{1}{g_*^p(g_*-1)^qg_*^{\prime}}\right)-\bigg[N_{(3}\left(r,\frac{1}{g_*^p(g_*-1)^qg_*^{\prime}}\right)-2\ol N_{(3}\left(r,\frac{1}{g_*^p(g_*-1)^qg_*^{\prime}}\right)\bigg]\\ &&+2\ol N(r,f_*)+o(T(r)).\nonumber\eea \par Let $ z_0 $ be a zero of $ f_* $ of multiplicity $ r $, then $ z_0 $ is a zero of $ f_*^p(f_*-1)^qf_*^{\prime} $ of multiplicity $ pr+r-1\geq 3 $. Thus we have  \bea\label{e3.6} N_{(3}\left(r,\frac{1}{f_*^p(f_*-1)^qf_*^{\prime}}\right)-2\ol N_{(3}\left(r,\frac{1}{f_*^p(f_*-1)^qf_*^{\prime}}\right)\geq (p-2)N\left(r,\frac{1}{f_*}\right). \eea Similarly, we get \bea\label{e3.7} N_{(3}\left(r,\frac{1}{g_*^p(g_*-1)^qg_*^{\prime}}\right)-2\ol N_{(3}\left(r,\frac{1}{g_*^p(g_*-1)^qg_*^{\prime}}\right)\geq (p-2)N\left(r,\frac{1}{g_*}\right). \eea\par Let \beas \mathcal{F}=\displaystyle\frac{f_*^{p+q+1}}{p+q+1}-\frac{^qC_1}{p+q}f_*^{p+q}+\frac{^qC_2}{p+q-1}f_*^{p+q-1}+\ldots+(-1)^q\frac{1}{p+q}f_*^{p+1}\eeas and \beas \mathcal{G}=\displaystyle\frac{g_*^{p+q+1}}{p+q+1}-\frac{^qC_1}{p+q}g_*^{p+q}+\frac{^qC_2}{p+q-1}g_*^{p+q-1}+\ldots+(-1)^q\frac{1}{p+q}g_*^{p+1}.\eeas\par By Lemma \ref{l2.4}, we have \beas T(r,\mathcal{F})=(p+q+1)T(r,f_*)+S(r,f_*).\eeas\par It is clear $ \mathcal{F}^{\prime}=\alpha f_1 $. So we have \bea\label{e3.8} m\left(r,\frac{1}{\mathcal{F}}\right)\leq m\left(r,\frac{1}{\alpha f_1}\right)+m\left(r,\frac{\mathcal{F}^{\prime}}{\mathcal{F}}\right)\leq m\left(r,\frac{1}{f_1}\right)+S(r,f_*).  \eea\par By using \emph{First fundamental Theorem} and (\ref{e3.8}), we obtained \bea\label{e3.9} &&  T(r,\mathcal{F})\\ &=&\nonumber m\left(r,\frac{1}{\mathcal{F}}\right)+N\left(r,\frac{1}{\mathcal{F}}\right)\\ &\leq&\nonumber T(r,f_1)+N\left(r,\frac{1}{\mathcal{F}}\right)-N\left(r,\frac{1}{f_1}\right)+S(r,f_*)\\&\leq&T(r,f_1)+(p+1)N\left(r,\frac{1}{f_*}\right)+\sum_{i=1}^{q}N\left(r,\frac{1}{f_*-a_i}\right)-N\left(r,\frac{1}{f_1}\right)+S(r,f_*) \nonumber,\eea where $ a_i $ $ (i=1,2,\ldots,q) $ are the roots of the algebraic equation \beas  \frac{1}{p+q+1}z^q-\frac{^qC_1}{p+q}z^{q-1}+\frac{^qC_2}{p+q-1}z^{q-2}+\ldots+(-1)^q\frac{1}{p+1}=0. \eeas\par Using (\ref{e3.5}) - (\ref{e3.8}) in (\ref{e3.9}), we get \beas && T(r,\mathcal{F})\\ &\leq& N\left(r,\frac{1}{f_*^p(f_*-1)^qf_*^{\prime}}\right)+(2-p)N\left(r,\frac{1}{f_*}\right)+N\left(r,\frac{1}{g_*^p(g_*-1)^qg_*^{\prime}}\right)+(2-p)N\left(r,\frac{1}{g_*}\right)\\ &&+2\ol N(r,f_*)+(p+1)N\left(r,\frac{1}{f_*}\right)+\sum_{i=1}^{q}N\left(r,\frac{1}{f_*-a_i}\right)-N\left(r,\frac{1}{f_*^p(f_*-1)^qf_*^{\prime}}\right)+o(T(r)). \eeas i.e., \beas && (p+q+1)T(r,f_*)\\ &\leq& 3 N\left(r,\frac{1}{f_*}\right)+3 N\left(r,\frac{1}{g_*}\right)+\ol N(r,g_*)+q\;N\left(r,\frac{1}{g_*-1}\right)+2\ol N(r,f_*)\\ &&+\sum_{i=1}^{q}N\left(r,\frac{1}{f_*-a_i}\right)+o(T(r)) \\ &\leq&(q+5)T(r,f_*)+(q+4)T(r,g_*)+o(T(r)).   \eeas i.e., \bea\label{e3.10} (p-4)T(r,f_*)\leq (q+4)T(r,g_*)+o(T(r)).  \eea\\ \par 
	Let $ g_1=-\displaystyle\frac{f_3}{f_2}=\frac{g_*^p(g_*-1)^qg_*^{\prime}}{\alpha} $,\;\; $ g_2=\displaystyle\frac{1}{f_2}=\frac{1}{\mathcal{H}} $\;\; and\;\; $ g_3=-\displaystyle\frac{f_1}{f_2}=-\frac{f_*^p(f_*-1)^qf^{\prime}}{\alpha\mathcal{H}}. $\par Then we get $ g_1+g_2+g_3=1 $. By Lemma \ref{l2.5}, $ g_1 $, $ g_2 $ and $ g_3 $ are linearly independent since $ f_1 $, $ f_2 $ and $ f_3 $ are linearly independent. Proceeding exactly same way as done in above, we get  \bea\label{e3.11} (p-4)T(r,g_*)\leq (q+4)T(r,g_*)+o(T(r)).\eea\par Let $ T_*(r)=\max\{T(r,f_*),T(r,g_*)\} $. After combining (\ref{e3.10}) and (\ref{e3.11}), we get \beas (p-q-8)T_*(r)\leq o(T(r)),  \eeas which contradicts $ p\geq q+9. $\par Thus $ f_1 $, $ f_2 $ and $ f_3 $ must be linearly dependent. Therefore there exists three constants $ c_1 $, $ c_2 $ and $ c_3 $, at least one of them are non-zero such that \bea\label{e3.12} c_1f_1+c_2f_2+c_3f_3=0.  \eea
	\noindent{\bfseries{Subcase 1.1.}} If $ c_1=0 $, $ c_2\neq 0 $ and $ c_3\neq 0 $, then from (\ref{e3.12}) we get $ \displaystyle f_3=-\frac{c_2}{c_3}f_2 $ which implies $ g_*^p(g_*-1)^qg_*^{\prime}=\displaystyle\frac{c_2}{c_3}\alpha. $\par On integrating, we get \bea\label{e3.13}\frac{g_*^{p+q+1}}{p+q+1}-\frac{^qC_1\;g_*^{p+q}}{p+q}+\frac{^qC_2\;g_*^{p+q-1}}{p+q-1}\ldots+(-1)^q\frac{g_*^{p+1}}{p+1}=\frac{c_2}{c_3}\alpha+c,   \eea where $ c $ is an arbitrary constant.\par Thus we see that \beas T\left(r,\frac{g_*^{p+q+1}}{p+q+1}-\frac{^qC_1\;g_*^{p+q}}{p+q}+\frac{^qC_2\;g_*^{p+q-1}}{p+q-1}\ldots+(-1)^q\frac{g_*^{p+1}}{p+1}\right)\leq T(r,\alpha)+O(1).  \eeas i.e., \beas (p+q+1)T(r,g_*)\leq S(r,g_*).  \eeas\par Since $ p\geq q+9 $, so we get a contradiction.\\
	\noindent{\bfseries{Subcase 1.2.}} Let $ c_1\neq 0 $. Then from (\ref{e3.12}), we get \beas f_1= \left(-\frac{c_2}{c_1}\right)f_2+\left(-\frac{c_3}{c_1}\right)f_3. \eeas \par After substituting this in the relation $ f_1+f_2+f_3=1 $, we get \beas \left(1-\frac{c_2}{c_1}\right)f_2+\left(1-\frac{c_3}{c_1}\right)f_3=1, \eeas where $ (c_1-c_2)(c_1-c_3)\neq 0 $. So we get \bea\label{e3.14} \left(1-\frac{c_3}{c_1}\right)\frac{g_*^p(g_*-1)^qf_*^{\prime}}{\alpha}+\frac{1}{\mathcal{H}}=\left(1-\frac{c_2}{c_1}\right).  \eea\par Again we see that \beas && T(r,g_*^p(g_*-1)^qg_*^{\prime})\\ &\leq& T\left(r,\frac{g_*^p(g_*-1)^qg_*^{\prime}}{\alpha}\right)+T(r,\alpha)\\ &\leq& T\left(r,\frac{g_*^p(g_*-1)^qg_*^{\prime}}{\alpha}\right)+S(r,g_*) .\eeas Next applying Lemma \ref{l2.2} to the equation (\ref{e3.14}), we get \beas && T\left(r,\frac{g_*^p(g_*-1)^qg_*^{\prime}}{\alpha}\right)\\ &\leq& \ol N\left(r,\frac{g_*^p(g_*-1)^qg_*^{\prime}}{\alpha}\right)+ \ol N\left(r,\frac{\alpha}{g_*^p(g_*-1)^qg_*^{\prime}}\right)+\ol N(r,\mathcal{H})+S(r,g). \eeas \par So combining the above two we get, \bea\label{e3.15} T(r,g_*^p(g_*-1)^qg_*^{\prime})\leq \ol N\left(r,\frac{1}{g_*^p(g_*-1)^qg_*^{\prime}}\right)+2\ol N(r,g_*)+S(r,g_*). \eea \par By applying Lemmas \ref{l2.3}, \ref{l2.4} and (\ref{e3.15}), we have \beas && (p+q)T(r,g_*)\\ &\leq& T(r,g_*^p(g_*-1)^q)+S(r,g_*)\\ &\leq& T(r,g_*^p(g_*-1)^qg_*^{\prime})+T\left(r,\frac{1}{g_*^{\prime}}\right)+S(r,g_*)\\ &\leq& \ol N\left(r,\frac{1}{g_*^p(g_*-1)^qg_*^{\prime}}\right)+2\ol N(r,g_*)+T\left(r,\frac{1}{g_*^{\prime}}\right)+S(r,g_*)\\ &\leq& 8T(r,g_*)+S(r,g_*),  \eeas which contradicts $ p\geq q+9 $.\\
	\noindent{\bfseries{Subcase 2.}} If $ f_2=k $, where $ k $ is a constant.\\
	\noindent{\bfseries{Subcase 2.1}} If $ k\neq 1 $, then from the relation $ f_1+f_2+f_3=1 $, we get \bea\label{e3.16} \frac{f_*^p(f_*-1)^qf_*^{\prime}}{\alpha}-k\frac{g_*^p(g_*-1)^qg_*^{\prime}}{\alpha}=1-k.  \eea \par Next we apply Lemma \ref{l2.2} to the equation (\ref{e3.16}), we get \bea\label{e3.17} && T\left(r,\frac{f_*^p(f_*-1)^qf_*^{\prime}}{\alpha}\right)\\ &\leq& \ol N(r,g_*)+\ol N\left(r,\frac{1}{f_*^p(f_*-1)^qf_*^{\prime}}\right)++\ol N\left(r,\frac{1}{g_*^p(g_*-1)^qg_*^{\prime}}\right)+S(r,f_*) \nonumber. \eea
	By applying Lemma \ref{l2.3}, \ref{l2.4} and using equation (\ref{e3.17}), we get \beas && (p+q)T(r,f_*)\\ &=& T(r,f_*^p(f_*-1)^q)+S(r,f_*)\\ &\leq& T(r,f_*^p(f_*-1)^qf_*^{\prime})+T\left(r,\frac{1}{f_*^{\prime}}\right)+S(r,f_*)\\ &\leq& T\left(r,\frac{f_*^p(f_*-1)^qf_*^{\prime}}{\alpha}\right)+T\left(r,\frac{1}{f_*^{\prime}}\right)+S(r,f_*).  \eeas i.e., \beas (p+q-7)T(r,f_*)\leq 4T(r,g_*)+S(r,g_*).  \eeas\par Using Lemma \ref{l2.7}, we get \beas (p+q-4)T(r,f_*) \leq 4\left(\frac{p+q+2}{p-6}\right)T(r,f_*)+S(r,g_*),  \eeas which contradicts $ p\geq q+9 $.\\
	\noindent{\bfseries{Subcase 2.2}} Let $ k=1 $ i.e., $ \mathcal{H}=1 $ i.e., \beas  f_*^p(f_*-1)^qf_*^{\prime}\equiv g_*^p(g_*-1)^qg_*^{\prime}. \eeas On integrating, we get \beas \frac{f_*^{p+q+1}}{p+q+1}-\frac{^qC_1 f_*^{p+q}}{p+q}+\ldots+(-1)^q\frac{f_*^{p+1}}{p+1}\equiv \frac{g_*^{p+q+1}}{p+q+1}-\frac{^qC_1 g_*^{p+q}}{p+q}+\ldots+(-1)^q\frac{g_*^{p+1}}{p+1}+c, \eeas where $ c $ is an arbitrary constant. i.e., \bea \mathcal{F}\equiv\mathcal{G}+c.  \eea\par\noindent{\bfseries{Subcase 2.2.1}} Let if possible $ c\neq 0 $. Next we get \beas \Theta(0,\mathcal{F})+\Theta(c,\mathcal{F})+\Theta(\infty,\mathcal{F})=\Theta(0,\mathcal{F})+\Theta(0,\mathcal{G})+\Theta(\infty,\mathcal{F}).  \eeas We have, \beas \ol N\left(r,\frac{1}{\mathcal{F}}\right)=\ol N\left(r,\frac{1}{f_*}\right)+ \ol N\left(r,\frac{1}{f_*-a_1}\right)+\ldots+\ol N\left(r,\frac{1}{f_*-a_q}\right)\leq (q+1)\;T(r,f_*).\eeas \par Similarly, we get $ \ol\displaystyle N\left(r,\displaystyle\frac{1}{\mathcal{G}}\right)\leq (q+1)\;T(r,g_*).$\\ \par Again note that $ \ol N(r,\mathcal{F})=\ol N(r,f_*)\leq T(r,f_*). $ Again \beas T(r,\mathcal{F})=(p+q+1)\;T(r,f_*)+S(r,f_*).  \eeas
	\beas T(r,\mathcal{G})=(p+q+1)\;T(r,g_*)+S(r,g_*).  \eeas\par Thus \beas  \Theta(0,\mathcal{F})=1-\limsup_{r\rightarrow\infty}\frac{\ol N\left(r,\displaystyle\frac{1}{\mathcal{F}}\right)}{T(r,\mathcal{F})} \geq 1-\displaystyle\frac{(q+1)T(r,f_*)}{(p+q+1)T(r,f_*)}=\displaystyle\frac{p}{p+q+1}. \eeas\par Similarly \beas \Theta(0,\mathcal{H})\geq\frac{p}{p+q+1}\hspace{0.1in}\text{and}\hspace{0.1in}  \Theta(\infty,\mathcal{F})\geq\frac{p+q}{p+q+1}.\eeas \par Therefore \beas \Theta(0,\mathcal{F})+\Theta(c,\mathcal{F})+\Theta(\infty;\mathcal{F})\geq\frac{3p+q}{p+q+1}>2, \eeas since $ p\geq q+9 $, which is a contradiction.\par\noindent{\bfseries{Subcase 2.2.2}} Thus we get $ c=0 $. Thus we get \bea\label{e3.19} \mathcal{F}\equiv\mathcal{G}.  \eea\par Let $ h=\displaystyle\frac{f_*}{g_*} $. Then substituting in (\ref{e3.19}), we get \bea\label{e3.20}&& (p+q)(p+q-1)\ldots(p+1)g_*^q(h^{p+q-1}-1)\\&&\nonumber -\; ^qC_1(p+q+1)(p+q-1)\ldots(p+1)g_*^{q-1}(h^{p+q}-1)\\ &&\nonumber+\ldots+(-1)^q(p+q+1)(p+q)\ldots p(h^{p+1}-1)=0.  \eea\par 
	\noindent{\bfseries{Subcase 2.2.2.1.}} If $ h $ is a non-constant, then using Lemma \ref{lm9} and proceeding exactly same way as done in \cite[p-1272]{Wag & Shi-AM-2014}, we arrive at a contradiction.\par
	\noindent{\bfseries{Subcase 2.2.2.2.}} Let $ h $ is constant, then from (\ref{e3.20}), we get $ h^{p+q+1}-1=0 $, $ h^{p+q}-1=0 $, $ \ldots $, $ h^{p+1}-1=0 $. i.e., $ h^d-1=0 $, where $ d=gcd(p+q+1, p+q,\ldots, p+1)=1. $ i.e., $ h=1 $.\par Hence $ f_*\equiv g_*. $ i.e., $ f\equiv g $.\par
	\noindent{\bfseries{Subcase 3.}} Suppose $ f_3=c $, where $ c $ is a constant.\par
	\noindent{\bfseries{Subcase 3.1.}} If $ c\neq 1 $, then from the relation $ f_1+f_2+f_3=1 $, we get \bea\label{e3.21}\frac{f_*^p(f_*-1)^qf_*^{\prime}}{\alpha}-\frac{c\alpha}{g_*^p(g_*-1)^qg_*^{\prime}}=1-c.\eea \par Applying Lemma \ref{l2.2} to the above equation, we get \bea\label{e3.22} && T(r,f_*^p(f_*-1)^qf_*^{\prime})\\ &\leq&\nonumber T\left(r,\frac{f_*^p(f_*-1)^qf_*^{\prime}}{\alpha}\right)+S(r,f_*)\\ &\leq&\nonumber \ol N\left(r,\frac{f_*^p(f_*-1)^qf_*^{\prime}}{\alpha}\right)+\ol N\left(r,\frac{\alpha}{f_*^p(f_*-1)^qf_*^{\prime}}\right)+\ol N\left(r,\frac{g_*^p(g_*-1)^qg_*^{\prime}}{\alpha}\right)\\ &&\nonumber+S(r,f_*)\\ &\leq&\nonumber\ol N(r,f_*)+\ol N\left(r,\frac{1}{f_*^p(f_*-1)^qf_*^{\prime}}\right)+\ol N(r,g_*)+S(r,f_*).\eea Using Lemma \ref{l2.3}, \ref{l2.4} and (\ref{e3.22}), we have \beas &&(p+q)T(r,f_*)\\ &\leq& T(r,f_*^p(f_*-1)^q)+S(r,f_*)\\ &\leq& T\left(r,\frac{1}{f_*^p(f_*-1)^qf_*^{\prime}}\right)+ T\left(r,\frac{1}{f_*^{\prime}}\right)+S(r,f_*)\\ &\leq& 7\;T(r,f_*)+T(r,g_*)+S(r,f_*). \eeas\par Next by applying Lemma \ref{l2.7}, we get \beas && (p+q-7)\;T(r,f_*)\\ &\leq& T(r,g_*)+S(r,f_*)\\ &\leq& \left(\frac{p+q+2}{p-6}\right) T(r,f_*)+S(r,f_*),\eeas which contradicts $ p\geq q+9 $.\par 
	\noindent{\bfseries{Subcase 3.2.}} Let $ c=1 $. Then from (\ref{e3.21}), we get \bea\label{e3.23} f_*^p(f_*-1)^qf_*^{\prime}g_*^p(g_*-1)^qg_*^{\prime}=\alpha^2.\eea\par Let $ z_0 $ be a zero of $ f_* $ of order $ r_0 $. Then from (\ref{e3.23}), we see that $ z_0 $ is a pole of $ g_* $ of order $ s_0 $ (say). Then from (\ref{e3.23}), we get $  pr_0+r_0-1=ps_0+qs_0+s_0+1.$ i.e., $ (p+1)(r_0-s_0)=qs_0+2\geq p+1.$ i.e., \beas r_0\geq\frac{p+q+1}{q}.  \eeas \par Again let $ z_1 $ be a zero of $ f_*-1 $ of order $ r_1 $. Then from (\ref{e3.23}), we see that $ z_1 $ will be a pole of $ g_* $ of order $ s_1 $ (say). So we have $ r_1+r_1-1=ps_1+qs_1+s_1+1.$ i.e., \beas r_1\geq\frac{p+q+3}{2}.\eeas\par Let $ z_2 $ be a zero of $ f_*^{\prime} $ of order $ r_2 $ which are not the zero of $ f_*(f_*-1) $, so from (\ref{e3.23}) we see that $ z_2 $ will be a pole of $ g_* $ of order $ s_2 $ (say). Then from (\ref{e3.23}), we get $ r_2=ps_2+qs_2+s_2+1 $. i.e., \beas r_2\geq p+q+2.\eeas \par The similar explanations hold for the zeros of $ g_*^p(g_*-1)^qg_*^{\prime} $ also. Next we see from (\ref{e3.23}), we have \beas \ol N\left(r,f_*^p(f_*-1)^qf_*^{\prime}\right)=\ol N\left(r,\frac{\alpha^2}{g_*^p(g_*-1)^qg_*^{\prime}}\right).\eeas i.e., \beas && \ol N(r,f_*)\\ &\leq& \ol N\left(r,\frac{1}{g_*}\right)+ \ol N\left(r,\frac{1}{g_*-1}\right)+\ol N\left(r,\frac{1}{g_*^{\prime}}\right)\\ &\leq&\left(\frac{q}{p+q+1}\right)N\left(r,\frac{1}{g_*}\right)+\left(\frac{2}{p+q+3}\right)N\left(r,\frac{1}{g_*-1}\right)+\left(\frac{1}{p+q+2}\right)N\left(r,\frac{1}{g_*^{\prime}}\right)\\ &\leq&\left(\frac{q}{p+q+1}+\frac{2}{p+q+3}+\frac{2}{p+q+2}\right) T(r,g_*)+S(r,g_*). \eeas\par By applying Second Fundamental Theorem, we get \bea\label{e3.24} && T(r,f_*)\\ &\leq& \nonumber\ol N\left(r,\frac{1}{f_*}\right)+\ol N\left(r,\frac{1}{f_*-1}\right)+\ol N(r,f_*)+S(r,f_*)\\ &\leq&\nonumber \left(\frac{q}{p+q+1}+\frac{2}{p+q+3}\right)T(r,f_*)+\left(\frac{q}{p+q+1}+\frac{2}{p+q+3}+\frac{2}{p+q+2}\right) T(r,g_*)\\ &&\nonumber+S(r,f_*)+S(r,g_*).\eea\par Similarly, we get \bea\label{e3.25} && T(r,g_*)\\ &\leq&\nonumber \left(\frac{q}{p+q+1}+\frac{2}{p+q+3}\right)T(r,g_*)+\left(\frac{q}{p+q+1}+\frac{2}{p+q+3}+\frac{2}{p+q+2}\right) T(r,f_*)\\ &&\nonumber+S(r,f_*)+S(r,g_*). \eea \par From (\ref{e3.24}) and (\ref{e3.25}), we get \beas T_*(r)\leq \left(\frac{2q}{p+q+1}+\frac{4}{p+q+3}+\frac{2}{p+q+2}\right)T_*(r)+S_*(r).\eeas i.e., \beas  \left(1-\frac{2q}{p+q+1}-\frac{4}{p+q+3}-\frac{2}{p+q+2}\right)T_*(r)\leq S_*(r),\eeas which contradicts $ p\geq q+9 $.
\end{proof}
\begin{proof}[\bf{Proof of Theorem \ref{th.2}}] Since $ f_* $ and $ g_* $ both are non-constant entire functions, then we may consider the followings two cases.\par
\noindent{\bfseries{Case 1.}} Let $ f_* $ and $ g_* $ are two transcendental entire functions. Then it is clear that $ \ol N(r,f_*)=S(r,f_*) $ and $ \ol N(r,g_*)=S(r,g_*) $. With this the rest of the proof can be carried out in the line of the proof of Theorem \ref{th.1}. \par
\noindent{\bfseries{Case 2.}} Let $ f_* $ and $ g_* $ both are polynomials. Since $ f_*^p(f_*-1)^qf_*^{\prime} $ and $ g_*^p(g_*-1)^qg_*^{\prime} $ share $ \alpha $ $ CM $, then we must have \bea\label{e3.26}  ( f_*^p(f_*-1)^qf_*^{\prime} -\alpha)=\kappa\;( g_*^p(g_*-1)^qg_*^{\prime} -\alpha),\eea where $ \kappa $ is a non-zero constant.\par
\noindent{\bfseries{Subcase 2.1.}} Suppose $ \kappa\neq 1 $, then from (\ref{e3.26}), we get \bea\label{e3.27} \frac{f_*^p(f_*-1)^qf_*^{\prime}}{\alpha}-\kappa\frac{g_*^p(g_*-1)^qg_*^{\prime}}{\alpha}=1-\kappa. \eea\par Applying Lemma \ref{l2.2}, we get \bea\label{e3.28} &&  T(r,f_*^p(f_*-1)^qf_*^{\prime})\\ &\leq&\nonumber T\left(r,\frac{f_*^p(f_*-1)^qf_*^{\prime}}{\alpha}\right)+S(r,f_*)\\ &\leq&\nonumber \ol N\left(r,\frac{f_*^p(f_*-1)^qf_*^{\prime}}{\alpha}\right)+\ol N\left(r,\frac{\alpha}{f_*^p(f_*-1)^qf_*^{\prime}}\right)+\ol N\left(r,\frac{\alpha}{g_*^p(g_*-1)^qg_*^{\prime}}\right)\\ &&\nonumber+S(r,f_*)\\ &\leq& \ol N(r,f_*)+\ol N\left(r,\frac{\alpha}{f_*^p(f_*-1)^qf_*^{\prime}}\right)+\ol N\left(r,\frac{\alpha}{g_*^p(g_*-1)^qg_*^{\prime}}\right)+S(r,f_*)  \nonumber.\eea\par Using Lemmas \ref{l2.3}, \ref{l2.4} and (\ref{e3.27}), we get \beas && (p+q)T(r,f_*)\\ &\leq& T(r,f_*^p(f_*-1)^q)+S(r,f_*)\\ &\leq& T(r,f_*^p(f_*-1)^qf_*^{\prime})+T\left(r,\frac{1}{f_*}\right)+S(r,f_*^{\prime})\\ &\leq& 4\;T(r,f_*)+3\;T(r,g_*)+S(r,f_*).\eeas i.e., \beas (p+q-4)T(r,f_*)\leq 3\;T(r,g_*)+S(r,g_*).\eeas Using Lemma \ref{l2.8}, we get \beas (p+q-4)T(r,f_*)\leq 3\left(\frac{p+q+1}{p-3}\right)T(r,f_*)+S(r,f_*),\eeas which contradicts $ p\geq q+5 $.\\
\noindent{\bfseries{Subcase 2.2.}} Let $ \kappa=1 $. So from (\ref{e3.27}), we get \beas f_*^p(f_*-1)^qf_*^{\prime}\equiv g_*^p(g_*-1)^qg_*^{\prime}. \eeas\par Next proceeding exactly same way as done in \emph{Subcase 1.3.2}\; in the proof of \emph{Theorem \ref{th.1}}, we get $ f\equiv g $.
\end{proof}

\section{Concluding remarks and some open questions}
\par If we replace the condition $`` f_*^p(f_*-1)^qf_*^{\prime} $ and $ g_*^p(g_*-1)^qg_*^{\prime} $ share $ \alpha (z) $ $ CM $" by the condition $`` f_*^p(f_*-1)^qf_*^{\prime} $ and $ g_*^p(g_*-1)^qg_*^{\prime} $ share $ z $ $ CM $ ", then the conclusions of Theorems \ref{th.1} and \ref{th.2} still hold.\par Thus we get the following results
\begin{theo}\label{th.3}
	Let $ f $ and $ g $ hence $ f_*=f-w_p $ and $ g_*=g-w_p $, $ w_p\in\mathbb{C} $ be any two non-constant non- entire meromorphic functions, $ n\geq q+9 $, $ q $ $ \in\mathbb{N} $, be an integer. If $ \mathcal{P}_*(f)f_*^{\prime}=f_*^p(f_*-1)^qf_*^{\prime} $ and $ \mathcal{P}_*(g)g_*^{\prime}=g_*^p(g_*-1)^qf_*^{\prime} $ share $ z $  $ CM $, then $ f\equiv g $.
\end{theo}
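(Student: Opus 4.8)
The plan is to recognize that Theorem~\ref{th.3} is nothing but the specialization $\alpha(z)\equiv z$ of Theorem~\ref{th.1}, so that the proof amounts to checking that $z$ is a legitimate choice of small function and then quoting Theorem~\ref{th.1}. First I would observe that $\alpha(z)=z$ is a non-constant polynomial, hence entire, with $\alpha\not\equiv 0$ and $\alpha\not\equiv\infty$, and that $T(r,z)=\log r+O(1)$; consequently $T(r,z)=S(r,f_*)=S(r,g_*)$ whenever $f_*$ and $g_*$ are transcendental, the case in which both are rational being disposed of exactly as in the polynomial subcase (\emph{Subcase 2.2}) in the proof of Theorem~\ref{th.2}. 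Thus $z$ meets all the requirements placed on $\alpha$ in Lemma~\ref{l2.7} and throughout the proof of Theorem~\ref{th.1}.

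Next I would retrace the proof of Theorem~\ref{th.1} verbatim with $\alpha$ replaced everywhere by $z$: form $\mathcal{H}\equiv(\mathcal{P}_*(f)f_*^{\prime}-z)/(\mathcal{P}_*(g)g_*^{\prime}-z)$, set $f_1=f_*^p(f_*-1)^qf_*^{\prime}/z$, $f_2=\mathcal{H}$, $f_3=-\mathcal{H}\,g_*^p(g_*-1)^qg_*^{\prime}/z$ so that $f_1+f_2+f_3=1$, and run the same case analysis — linear independence (Lemmas~\ref{l2.1} and~\ref{l2.5}) versus linear dependence, then the subcases where one of $f_2,f_3$ is constant, ending with $\mathcal{H}=1$, $\mathcal{F}\equiv\mathcal{G}$, and $h=f_*/g_*=1$. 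At every step the only properties of the shared target actually invoked are $\alpha\not\equiv 0,\infty$ and $T(r,\alpha)=S(r,f_*)=S(r,g_*)$, together with the hypotheses that $\mathcal{P}_*(f)f_*^{\prime}$ and $\mathcal{P}_*(g)g_*^{\prime}$ share that target $CM$ and that $f_*,g_*$ share $\infty$ $IM$; the second fundamental theorem estimates, the deficiency computations of \emph{Subcase 2.2.1}, and the numerical thresholds $p\ge 7$ in Lemma~\ref{l2.7} and $n\ge q+9$ are all insensitive to the particular small function. Hence $f\equiv g$.

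The only point requiring a word of care — and the nearest thing to an obstacle — is that $z$ vanishes at the origin, so one must check that this single zero does not corrupt the pole/zero counting for $\mathcal{P}_*(f)f_*^{\prime}-z$; since $\mathcal{P}_*(f)f_*^{\prime}$ does not generically vanish at $z=0$, that zero contributes only an $O(1)$, hence $S(r,\cdot)$, term and is harmless. I would also keep explicitly in force the hypothesis ``$f_*$ and $g_*$ share $\infty$ $IM$'' of Theorem~\ref{th.1}, since it is precisely what makes $\ol N(r,\mathcal{H})\le\ol N_L(r,f)$ and $\ol N(r,1/\mathcal{H})\le\ol N_L(r,g)$ valid. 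Granting this, Theorem~\ref{th.3} follows immediately from Theorem~\ref{th.1}.
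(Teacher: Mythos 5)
Your proposal is correct and is essentially the paper's own argument: the paper offers no separate proof of Theorem~\ref{th.3}, merely the remark that the conclusion of Theorem~\ref{th.1} persists when the shared small function $\alpha(z)$ is specialized to $z$. Your additional care in checking that $T(r,z)=S(r,f_*)=S(r,g_*)$ and in reinstating the hypothesis that $f_*$ and $g_*$ share $\infty$ $IM$ (which the printed statement of Theorem~\ref{th.3} silently drops but which the proof of Theorem~\ref{th.1} genuinely uses via $\ol N(r,\mathcal{H})\le\ol N_L(r,f)$) is a point in your favour rather than a deviation.
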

\begin{theo}\label{th.4}
	Let $ f $ and $ g $ hence $ f_*=f-w_p $ and $ g_*=g-w_p $, $ w_p\in\mathbb{C} $ be any two non-constant entire functions, $ n\geq q+5 $, $ q $ $ \in\mathbb{N} $, be an integer. If $ \mathcal{P}_*(f)f_*^{\prime}=f_*^p(f_*-1)^qf_*^{\prime} $ and $ \mathcal{P}_*(g)g_*^{\prime}=g_*^p(g_*-1)^qf_*^{\prime} $ share $ z $  $ CM $, then $ f\equiv g $.
\end{theo}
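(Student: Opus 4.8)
The plan is to deduce Theorem~\ref{th.4} from Theorem~\ref{th.2} by taking the shared small function to be $\alpha(z)=z$. First I would observe that $z$ is a non-constant polynomial with $T(r,z)=\log r+O(1)$, so whenever $f$ and $g$ (equivalently $f_*$ and $g_*$) are transcendental entire functions one has $T(r,z)=S(r,f_*)=S(r,g_*)$; hence $\alpha(z):=z$ is a small function of both $f$ and $g$ with $\alpha\not\equiv 0,\infty$, and Theorem~\ref{th.2} applies verbatim to give $f\equiv g$.

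The only situation not covered this way is that in which $f_*$ and $g_*$ are both polynomials, where $z$ ceases to be ``small'' in the Nevanlinna sense and Theorem~\ref{th.2} cannot be quoted as a black box; here I would rerun the polynomial part (Case~2) of the proof of Theorem~\ref{th.2} with $\alpha$ replaced by $z$. Since $f_*^p(f_*-1)^qf_*'$ and $g_*^p(g_*-1)^qg_*'$ are then polynomials sharing $z$ $CM$, the ratio $\left(f_*^p(f_*-1)^qf_*'-z\right)/\left(g_*^p(g_*-1)^qg_*'-z\right)$ is a rational function with neither zeros nor poles, hence a non-zero constant $\kappa$, so
\[
f_*^p(f_*-1)^qf_*'-z=\kappa\left(g_*^p(g_*-1)^qg_*'-z\right).
\]
If $\kappa\neq 1$ I would divide through by $z$, rewrite the identity as $f_*^p(f_*-1)^qf_*'/z-\kappa\,g_*^p(g_*-1)^qg_*'/z=1-\kappa$, and apply Lemma~\ref{l2.2}; combining this with Lemmas~\ref{l2.3}, \ref{l2.4} and~\ref{l2.8} exactly as in Subcase~2.1 of the proof of Theorem~\ref{th.2} yields
\[
(p+q-4)T(r,f_*)\leq 3\left(\frac{p+q+1}{p-3}\right)T(r,f_*)+S(r,f_*),
\]
which contradicts the hypothesis $n\geq q+5$ (note $p=n+m-q\geq m+5\geq 6$, so in particular $p\geq 5$, Lemma~\ref{l2.8} is available, and the $S(r,\cdot)$ terms, of order $\log r$, are negligible against the degree count). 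If $\kappa=1$, then $f_*^p(f_*-1)^qf_*'\equiv g_*^p(g_*-1)^qg_*'$, and after integrating, the argument of Subcase~2.2 of the proof of Theorem~\ref{th.1} applies unchanged: with $h=f_*/g_*$ one uses Lemma~\ref{lm9} when $h$ is non-constant and the identity $\gcd(p+q+1,p+q,\ldots,p+1)=1$ when $h$ is constant, and in either case concludes $f_*\equiv g_*$, that is, $f\equiv g$.

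The only genuine obstacle is the bookkeeping in the polynomial subcase: one has to confirm that every step of Case~2 of the proof of Theorem~\ref{th.2} used only $\alpha\not\equiv 0,\infty$ together with $T(r,\alpha)=S(r,f_*)=S(r,g_*)$, and that for polynomial $f_*,g_*$ the last relation may be read as $T(r,z)=O(\log r)$, which is dominated by $(p+q)\deg f_*\cdot\log r$ once $p+q\geq 6$. Everything beyond that is a transcription of the proofs of Theorems~\ref{th.1} and~\ref{th.2}, so I do not expect any new idea to be needed.
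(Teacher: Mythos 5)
Your argument is the paper's argument: Theorem \ref{th.4} is obtained by specializing $\alpha(z)=z$ in Theorem \ref{th.2}, and the paper offers nothing beyond that one-sentence observation. You are in fact more careful than the paper at the single point where the specialization is not automatic: when $f_*$ and $g_*$ are polynomials, $z$ is not a small function ($T(r,z)=\log r$ is comparable to $T(r,f_*)=\deg f_*\cdot\log r$), so Case~2 of the proof of Theorem \ref{th.2} has to be rerun with $\alpha=z$, exactly as you propose, and your remark that the ratio of the two polynomials $f_*^p(f_*-1)^qf_*'-z$ and $g_*^p(g_*-1)^qg_*'-z$ is a nonzero constant is the correct replacement for the $CM$-sharing hypothesis there. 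The one soft spot is your closing claim that the resulting $O(\log r)$ terms are ``negligible against the degree count'': for polynomials they are not $o(T(r,f_*))$, so the Subcase~2.1 inequality yields only an absolute bound on $\deg f_*$ rather than an instant contradiction, and low-degree polynomials would still need a word. This blemish is inherited from the paper itself, whose Case~2 in the proof of Theorem \ref{th.2} silently writes $S(r,f_*)$ for quantities involving $T(r,\alpha)$, so it does not distinguish your proof from the paper's; but to make the polynomial case airtight you should either count the finitely many $\log r$ terms explicitly against the coefficient margin, or dispose of polynomials by direct degree comparison in $f_*^p(f_*-1)^qf_*'-z=\kappa\left(g_*^p(g_*-1)^qg_*'-z\right)$.
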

\begin{note}\label{note4.1}
	If we choose $ q=m $, $ w_p=0 $, then since $ p=n+m-q $ and $ f_*=f-w_p $, so we get $ p=n $ and $ f_*=f $, respectively. With this we see that $ n\geq m+9 $ in Theorem \ref{th.3} and $ n\geq m+5 $ in Theorem \ref{th.4}.
\end{note}
\par So from the above note, we observe that Theorem \ref{th.3} and Theorem \ref{th.4} are the direct improvement as well as extension of Theorem H and I respectively.\par 
\begin{rem}
	We see from \emph{Note \ref{note4.1}} that for $ m=1 $ and $ m=2 $, we get $ n\geq 10 $ and $ n\geq 11 $ respectively in \emph{Theorem \ref{th.3}} which is a direct improvement of \emph{Theorem E} and \emph{F}.
\end{rem}
\begin{rem}
	For $ m=1 $, we see from \emph{Note \ref{note4.1}} that $ n\geq 6 $ in \emph{Theorem \ref{th.4}} which is a direct improvement of \emph{Theorem G}. 
\end{rem}\par Next for further research in this direction, one my glance over the following remarks.
\begin{rem}
	What worth noticing fact is that in  \cite[equation (39)]{Wag & Ana-AM-2016}, there is no term which is absent in the expression. So, for the case of $ h $ is constant, \cite[equation (40)]{Wag & Ana-AM-2016} implies $ h^d-1=0 $, where $ d=gcd(n+m+1, n+m,\ldots, n+1)=1. $ i.e., $ h=1 $ and hence $ f\equiv g $. But if we replace $ (f-1)^m $ in the expression $ f^n(f-1)^mf^{\prime} $ by a more general expression $ f^nP_m(f)f^{\prime} $, where $ P_m(f)=a_mf^m+a_{m-1}f^{m-1}+\ldots+a_1f+a_0 $, $ a_i\in\mathbb{C} $, for $ i=0,1,\ldots,m $. It is not always possible to handle the case of $ h $ is constant. If somehow one can do that, then from the case of $ h $ is constant, $ h^d-1=0 $, where $ d=gcd(n+m+1, n+m,\ldots,n+1)\neq 1 $ in general. So we can't obtained $ f\equiv g $ in general.
\end{rem}
\par Based on the above observations, we next pose the following open questions.
\begin{ques}
	Is it possible to reduce further the lower bounds of $ p $ in \emph{Theorem \ref{th.1}} and \emph{Theorem \ref{th.2}} ?
\end{ques}
\begin{ques}
	To get the uniqueness between $ f $ and $ g $ is it possible to replace $ f_*^p(f_*-1)^qf_*^{\prime} $ and $ g_*^p(g_*-1)^qg_*^{\prime} $ respectively by $ f_*^pP_m(f_*)f_*^{\prime} $ and $ g_*^pP_m(g_*)g_*^{\prime} $, where $ P_m(f_*)= P_m(f)=a_mf_*^m+a_{m-1}f_*^{m-1}+\ldots+a_1f_*+a_0  $ in \emph{Theorem \ref{th.1}} and \emph{Theorem \ref{th.2}} ?
\end{ques}

\end{document}